   \numberwithin{equation}{section}
\newtheorem{thm}{Theorem}[section]
\newtheorem{lem}[thm]{Lemma}
\newtheorem{prop}[thm]{Proposition}
\newtheorem{defn}[thm]{Definition}
\begin{document}
\begin{frontmatter}
\author[rvt1,rvt2]{Jin Hong}
\ead{jhong@nenu.edu.cn}
\author[rvt2]{Yong Wang\corref{cor2}}
\ead{wangy581@nenu.edu.cn}
\cortext[cor2]{Corresponding author.}
\address[rvt1]{School of Mathematics and Statistics, Yili Normal University, Yining 835000, China}
\address[rvt2]{School of Mathematics and Statistics, Northeast Normal University, Changchun, 130024, China}

\title{The spectral Einstein functional for the Dirac operator with torsion  }
\begin{abstract}
In this paper, we compute the spectral Einstein functional associated with the Dirac operator with torsion on even-dimensional spin manifolds without boundary.
\end{abstract}
\begin{keyword}
 Dirac operator with torsion;  spectral Einstein functional;  noncommutative residue.
\end{keyword}
\end{frontmatter}
\section{Introduction}
 Noncommutative residues are an important tool for studying noncommutative geometry. Therefore, much attention has been paid to noncommutative residues.
 In \cite{Co1}, Connes derived the analog of the conformal four-dimensional Polyakov action using noncommutative residues. Connes proved that the noncommutative residue on a compact manifold $M$ coincides with the Dixmier trace on a pseudo-differential operator of order -dim$M$ \cite{Co2}. Furthermore, Connes claimed that the noncommutative residue of the inverse square of the Dirac operator is equivalent to the Einstein-Hilbert action on \cite{Co2, Co3}. Kastler gave a direct proof of this theorem in \cite{Ka}, while Kalau and Walze obtained it simultaneously in \cite{KW} in the normal coordinate system. Based on the theory of noncommutative residues proposed by Wodzicki, Fedosov et al. \cite{FGLS} the noncommutative residues of the classical elemental algebra of Boutet de Monvel's calculus are constructed on compact manifolds of dimension $n>2$. Using elliptic pseudo-differential operators and noncommutative residues is a natural way to study the spectral Einstein functional and the operator-theoretic interpretation of the gravitational action on bounded manifolds. Concerning the Dirac operator and the signature operator, Wang carried out the computation of noncommutative residues and succeeded in proving the Kastler-Kalau-Walze type theorem for manifolds with boundary \cite{Wa1, Wa3, Wa4}.
 
Many noncommutative residues of the Dirac operator are studied \cite{Wa1,Wa3,Wa4,WWJ,WWw}. In \cite{FGV2}, Figueroa et al. introduced a noncommutative integral based on the noncommutative residue \cite{wo2}. In \cite{Ac2}, Ackermann and Tolksdorf proved a generalized version of the well-known Lichnerowicz formula for the square of the most general Dirac operator with torsion $D_T$ on an even-dimensional spin manifold associated with a metric connection with torsion.
Wang et al. \cite{WWJ} gave two kinds of operator-theoretic explanations of the
gravitational action of Dirac operators with torsion in the case of 4-dimensional compact manifolds with flat boundary. In \cite{WWw}, Wang et al. gave some new spectral functionals which is the extension of spectral functionals to the noncommutative realm with torsion, and related them to the noncommutative residue for manifolds with boundary about Dirac operators with torsion. 
In \cite{DL}, Dabrowski et al. defined the spectral Einstein functional for a general spectral triple. For the noncommutative torus, they computed the spectral Einstein functional for the Dirac operator. Pf${\mathrm{\ddot{a}}}$ffle and Stephan \cite{pf1} considered orthogonal connections with arbitrary torsion on
compact Riemannian manifolds and computed the spectral action. In \cite{pf2}, Pf${\mathrm{\ddot{a}}}$ffle and Stephan considered compact Riemannian spin manifolds without boundary equipped with orthogonal connections, and investigated the induced Dirac operators. Iochum et al. \cite{ILV} derived a commutative spectral triple and studied the spectral action for a rather general geometric setting, which includes the (skew-symmetric) torsion and the chiral bag conditions on the boundary. In \cite{pf3},  Pf${\mathrm{\ddot{a}}}$ffle and Stephan gave a Lichnerowicz type formula for the Dirac operator with torsion. Hanisch et al. \cite{Ha} derived a formula for the gravitational part of the spectral action for Dirac operators on 4-dimensional manifolds with totally anti-symmetric torsion. 
In \cite{DL2}, Dabrowski et al. examined the metric and Einstein bilinear functionals of differential forms for the Hodge–Dirac operator on an oriented, closed, even-dimensional Riemannian manifold.
So, it is natural to consider the spectral Einstein functional by the Dirac operator with torsion. The motivation of this paper is to compute the spectral Einstein functional associated with the Dirac operator with torsion on even-dimensional spin manifolds without boundary.

This paper is organized as follows. In Section 2, we give a brief exposition of the Dirac operator with torsion. Using
the results in Sec.2, we obtain spectral Einstein functional associated with the Dirac operator with torsion on even-dimensional spin manifolds without boundary in the next section.

\section{The Dirac operator with torsion }
We give some definitions and basic notions which we will use in this paper.

Let $M$ be a smooth compact oriented Riemannian $n$-dimensional manifolds without boundary and $N$ be a vector bundle on $M$.
We say that $P$ is a differential operator of Laplace type, if it has locally the form
\begin{equation}\label{p}
	P=-(g^{ij}\partial_i\partial_j+A^i\partial_i+B),
\end{equation}
where $\partial_{i}$  is a natural local frame on $TM,$ $(g^{ij})_{1\leq i,j\leq n}$ is the inverse matrix associated to the metric
matrix  $(g_{ij})_{1\leq i,j\leq n}$ on $M,$ $A^{i}$ and $B$ are smooth sections of $\textrm{End}(N)$ on $M$ (endomorphism).
If $P$ satisfies the form \eqref{p}, then there is a unique
connection $\nabla$ on $N$ and a unique endomorphism $E$ such that
\begin{equation}
	P=-[g^{ij}(\nabla_{\partial_{i}}\nabla_{\partial_{j}}- \nabla_{\nabla^{L}_{\partial_{i}}\partial_{j}})+E],\nonumber
\end{equation}
where $\nabla^{L}$ is the Levi-Civita connection on $M$. Moreover
(with local frames of $T^{*}M$ and $N$), $\nabla_{\partial_{i}}=\partial_{i}+\omega_{i} $
and $E$ are related to $g^{ij}$, $A^{i}$ and $B$ through
\begin{eqnarray}
	&&\omega_{i}=\frac{1}{2}g_{ij}\big(A^{i}+g^{kl}\Gamma_{ kl}^{j} id\big),\nonumber\\
	&&E=B-g^{ij}\big(\partial_{i}(\omega_{j})+\omega_{i}\omega_{j}-\omega_{k}\Gamma_{ ij}^{k} \big),\nonumber
\end{eqnarray}
where $\Gamma_{ kl}^{j}$ is the  Christoffel coefficient of $\nabla^{L}$.

Let $\nabla^{T}$ denote the metric connection
\begin{equation}
		\langle \nabla_{X}^{T} Y, Z \rangle=\langle \nabla_{X}^{L} Y, Z \rangle + T(X, Y, Z),\nonumber
\end{equation}
where $T$ is a there form.

Let $M$ be an $n=2m$ dimensional ($n\geq 3$) spin  manifold, we can lift $\nabla^T$ to $\nabla^{S(TM),T}$ on $S(TM)$.
The Dirac operator with torsion $D_T$ is defined as:
\begin{align}
	D_T=&\sum_{j=1}^{n}c(e_{j})\nabla_{e_{j}}^{S(TM),T}\nonumber\\
		=&\sum_{j=1}^{n}c(e_{j})\bigg(e_{j}+\frac{1}{4}\sum_{l, t=1}^{n}\langle \nabla_{e_{j}}^{T}e_{l}, e_{t}\rangle c(e_{l})c(e_{t})\bigg)\nonumber\\
		=&\;D+\frac{1}{4}\sum_{j,l,t=1}^{n}T(e_j, e_l, e_t)c(e_{j})c(e_{l})c(e_{t})\nonumber\\
		=&\;D+\frac{3}{2}\sum_{1\leqslant j< l< t\leqslant n}T(e_j, e_l, e_t)c(e_{j})c(e_{l})c(e_{t}),\nonumber
\end{align}
where $D$ is the Dirac operator induced by the Levi-Civita connection, and $c(e_{j})$ be the Clifford action which satisfies the relation
\begin{align}
	&c(e_{i})c(e_{j})+c(e_{j})c(e_{i})=-2g^{M}(e_{i}, e_{j})=-2\delta_i^j.\nonumber
\end{align}

By lemma 2.1 in \cite{WWw} ,we get
\begin{align} \label{dt2}
D_{T}^{2}=-g^{ab}(\nabla_{{\partial}_a}\nabla_{{\partial}_b}-\nabla_{{\nabla_{{\partial}_a}^{L}}\partial_b})+E,
\end{align}
where
\begin{align} \label{e}
	E=\frac{3}{2} d T+\frac{1}{4} s-\frac{3}{4}\|T\|^{2},
\end{align}
and
\begin{align} \label{a}
	\nabla_{{\partial}_a}={\partial}_a-\widetilde{T}_a={\partial}_a+\frac{1}{4}\sum_{s, t=1}^{n}\langle \nabla_{{\partial}_a}^{L}e_{s}, e_{t}\rangle c(e_{s})c(e_{t})+\frac{3}{2}\sum_{1\leqslant j<l\leqslant n}T({\partial}_a, e_j, e_l) c(e_{j})c(e_{l}).
\end{align}
So we have
\begin{align} \label{ta1}
	\widetilde{T}_a=-\frac{1}{4}\sum_{s, t=1}^{n}\langle \nabla_{{\partial}_a}^{L}e_{s}, e_{t}\rangle c(e_{s})c(e_{t})-\frac{3}{2}\sum_{1\leqslant j<l\leqslant n}T({\partial}_a, e_j, e_l) c(e_{j})c(e_{l}).
\end{align}
In normal coordinates, $\widetilde{T}_a$ is expanded near $x=0$ by Taylor expansion. By ${\partial}_{l}\langle \nabla_{{\partial}_a}^{L}e_{s}, e_{t}\rangle(x_0)=\frac{1}{2}{\rm R}_{lats}(x_0)$ we get
\begin{align} \label{ta2}
	\widetilde{T}_a={T}_a+{T}_{ab}x^b+O(x^2),
\end{align}
where
\begin{align} \label{ta0}
	{T}_a=-\frac{3}{2}\sum_{1\leqslant j<l\leqslant n}T({\partial}_a, e_j, e_l)(x_0) c(e_{j})c(e_{l}),
\end{align}
and
\begin{align} \label{tab}
	{T}_{ab}=-\frac{1}{8}\sum_{s, t=1}^{n}\mathrm{R}_{bats}(x_0) c(e_{s})c(e_{t})-\frac{3}{2}\sum_{1\leqslant j<l\leqslant n}\frac{\partial T({\partial}_a, e_j, e_l)}{{\partial}x_b}(x_0) c(e_{j})c(e_{l}).
\end{align}

 \begin{lem}\label{lem1}{\rm \cite{DL}}
	The following identities hold:
	\begin{align}\label{sigma0} 
		\sigma_{-2 m}(\Delta_{T,E}^{-m})=&\|\xi\|^{-2 m-2}\sum_{a,b,j,k=1}^{2m}\left(\delta_{a b}-\frac{m}{3} {\rm R}_{a j b k} x^{j} x^{k}\right) \xi_{a} \xi_{b}+O\left(\mathbf{x}^{3}\right) ;\\
		\sigma_{-2m-1}(\Delta_{T,E}^{-m})=& \frac{-2 m i}{3}\|\xi\|^{-2 m-2} \sum_{a,k=1}^{2m}\operatorname{Ric}_{a k} x^{k} \xi_{a}-2 m i\|\xi\|^{-2 m-2}\sum_{a,b=1}^{2m}\left(T_{a} \xi_{a}+T_{a b} x^{b} \xi_{a}\right)+O(\mathbf{x^2}) ;\\
		\sigma_{-2m-2}(\Delta_{T,E}^{-m})=& \frac{m(m+1)}{3}\|\xi\|^{-2 m-4} \sum_{a,b=1}^{2m}\operatorname{Ric}_{a b} \xi_{a} \xi_{b} \\
		&-2 m(m+1)\|\xi\|^{-2 m-4}\sum_{a,b=1}^{2m} T_{a} T_{b} \xi_{a} \xi_{b}+m\sum_{a,b=1}^{2m}\left(T_{a} T_{a}-T_{a a}\right)\|\xi\|^{-2 m-2}\nonumber \\
		&+2 m(m+1)\|\xi\|^{-2 m-4} \sum_{a,b=1}^{2m}T_{a b} \xi_{a} \xi_{b}-mE \|\xi\|^{-2m-2}+O(\mathbf{x}) ,\nonumber
	\end{align}
where ${\rm R}_{a j b k}$ and $\operatorname{Ric}_{a k}$ are the components of the Riemann and Ricci tensor.
\end{lem}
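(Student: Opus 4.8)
The plan is to realize $\Delta_{T,E}=D_T^2$ as a Laplace type operator through \eqref{dt2} and to read off the three homogeneous symbols of the complex power $\Delta_{T,E}^{-m}$ from the resolvent symbol of $\Delta_{T,E}-\lambda$. First I would write the complete symbol $p=p_2+p_1+p_0$ of $\Delta_{T,E}$ in the normal coordinates centered at $x_0$ that were fixed for \eqref{ta2}. Starting from $\Delta_{T,E}=-g^{ab}(\nabla_{\partial_a}\nabla_{\partial_b}-\nabla_{\nabla^L_{\partial_a}\partial_b})+E$ with $\nabla_{\partial_a}=\partial_a-\widetilde T_a$, a direct expansion gives
\begin{align}
p_2&=g^{ab}\xi_a\xi_b,\nonumber\\
p_1&=i\bigl(2g^{ab}\widetilde T_a\xi_b+g^{ab}\Gamma^c_{ab}\xi_c\bigr),\nonumber\\
p_0&=g^{ab}\bigl(\partial_a\widetilde T_b-\widetilde T_a\widetilde T_b-\Gamma^c_{ab}\widetilde T_c\bigr)+E.\nonumber
\end{align}
Into these I would substitute the normal-coordinate data: $g^{ab}(x)=\delta_{ab}+\tfrac13\mathrm{R}_{ajbk}x^jx^k+O(\mathbf x^3)$, the Christoffel symbols $\Gamma^c_{ab}(x)$ which are $O(\mathbf x)$ and whose $g^{ab}$-contraction produces the Ricci tensor, the Taylor expansion $\widetilde T_a=T_a+T_{ab}x^b+O(\mathbf x^2)$ from \eqref{ta2}--\eqref{tab}, and $E$ from \eqref{e}, keeping $x$ to the order at which each piece is needed.

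Next I would build the symbol $r=\sum_{j\ge0}r_{-2-j}$ of $(\Delta_{T,E}-\lambda)^{-1}$, with $r_{-2-j}$ homogeneous of degree $-2-j$ in $(\xi,\sqrt{-\lambda})$, by solving the parametrix equation $\sum_\alpha\tfrac{1}{\alpha!}\partial_\xi^\alpha(p-\lambda)\,D_x^\alpha r=1$ (with $D_x=-i\partial_x$) order by order:
\begin{align}
r_{-2}&=(p_2-\lambda)^{-1},\nonumber\\
r_{-3}&=-(p_2-\lambda)^{-1}\Bigl(p_1 r_{-2}+\sum_j\partial_{\xi_j}p_2\,D_{x_j}r_{-2}\Bigr),\nonumber\\
r_{-4}&=-(p_2-\lambda)^{-1}\Bigl(p_1 r_{-3}+p_0 r_{-2}+\sum_j\partial_{\xi_j}p_2\,D_{x_j}r_{-3}+\sum_j\partial_{\xi_j}p_1\,D_{x_j}r_{-2}+\tfrac12\sum_{j,k}\partial_{\xi_j}\partial_{\xi_k}p_2\,D_{x_j}D_{x_k}r_{-2}\Bigr).\nonumber
\end{align}

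I would then recover the three symbols by the Cauchy formula $\sigma_{-2m-j}(\Delta_{T,E}^{-m})=\tfrac{1}{2\pi i}\oint_\Gamma\lambda^{-m}r_{-2-j}\,d\lambda$, where $\Gamma$ encircles the spectrum. The elementary residues $\tfrac{1}{2\pi i}\oint_\Gamma\lambda^{-m}(p_2-\lambda)^{-k}\,d\lambda$ supply the powers $\|\xi\|^{-2m}$, $\|\xi\|^{-2m-2}$, $\|\xi\|^{-2m-4}$ together with the numerical coefficients $m$ and $m(m+1)$ appearing in the statement. For the leading symbol only $r_{-2}$ enters, and the metric expansion of $p_2$ yields the Riemann term in \eqref{sigma0}. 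For $\sigma_{-2m-1}$ only $r_{-3}$ contributes: the piece $g^{ab}\widetilde T_a\xi_b$ of $p_1$ produces the torsion terms $T_a\xi_a$ and $T_{ab}x^b\xi_a$, the contracted Christoffel piece $g^{ab}\Gamma^c_{ab}\xi_c$ produces the $\operatorname{Ric}_{ak}x^k\xi_a$ term, while the Riemann contribution coming from $\partial_{\xi_j}p_2\,D_{x_j}r_{-2}$ drops out because $\mathrm{R}_{ajbk}\xi_a\xi_j=0$ by the antisymmetry of the curvature in its first pair of indices.

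The main obstacle is the degree $-2m-2$ symbol, which requires the full $r_{-4}$ evaluated at $x_0=0$. Here $p_0$ carries both the endomorphism $E$ of \eqref{e} and, through $\widetilde T_a\widetilde T_b$ and $g^{ab}\partial_a\widetilde T_b$, the quadratic torsion $T_aT_a$ and the trace $T_{aa}$; the term $p_1 r_{-3}$ supplies the $T_aT_b\xi_a\xi_b$ contribution; and $\partial_{\xi_j}p_1\,D_{x_j}r_{-2}$ together with $\partial_{\xi_j}\partial_{\xi_k}p_2\,D_{x_j}D_{x_k}r_{-2}$ supply the $T_{ab}\xi_a\xi_b$ and the Ricci pieces. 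The delicate step is the bookkeeping: separating the contributions proportional to $T_aT_b\xi_a\xi_b$, $T_{ab}\xi_a\xi_b$, $(T_aT_a-T_{aa})$, $\operatorname{Ric}_{ab}\xi_a\xi_b$ and $E$, attaching to each the correct power of $\|\xi\|$ and the correct factor issuing from the $\lambda$-integration, and using the curvature symmetries together with the vanishing of the odd-in-$\xi$ terms at $x_0$ to collapse the many cross terms into the compact form displayed in the lemma.
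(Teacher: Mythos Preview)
The paper does not give its own proof of this lemma; it is simply quoted from \cite{DL}, as the citation in the lemma header indicates. So there is nothing in the present paper to compare your argument against.

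That said, your proposal is exactly the standard and correct route to these formulas: expand the full symbol $p_2+p_1+p_0$ of $\Delta_{T,E}$ in normal coordinates, build the parametrix symbols $r_{-2},r_{-3},r_{-4}$ of $(\Delta_{T,E}-\lambda)^{-1}$ by the usual recursion, and then extract $\sigma_{-2m-j}(\Delta_{T,E}^{-m})$ via the Cauchy integral $\tfrac{1}{2\pi i}\oint\lambda^{-m}r_{-2-j}\,d\lambda$. This is precisely the method used in \cite{DL}, so your approach coincides with the original source. Your identification of which pieces of $p_1$ and $p_0$ feed which terms of the final answer (the contracted Christoffel giving the Ricci term in $\sigma_{-2m-1}$, the $\widetilde T_a$-expansion \eqref{ta2} giving the $T_a$ and $T_{ab}$ terms, and $p_0$ together with $p_1r_{-3}$ feeding the quadratic torsion and $E$ terms in $\sigma_{-2m-2}$) is accurate, and you are right that the heavy lifting is the bookkeeping at order $-2m-2$.

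One small remark on your argument for $\sigma_{-2m-1}$: the term $\partial_{\xi_j}p_2\,D_{x_j}r_{-2}$ does not vanish solely by the antisymmetry $\mathrm{R}_{ajbk}\xi_a\xi_j=0$; rather, after expanding $D_{x_j}r_{-2}$ to first order in $x$ and contracting with $\partial_{\xi_j}p_2=2\xi_j+O(\mathbf x^2)$, the surviving curvature contraction combines with the Christoffel contribution from $p_1$ to produce the single $\operatorname{Ric}_{ak}x^k\xi_a$ term with the displayed coefficient. You should keep both contributions and check that their sum matches $-\tfrac{2mi}{3}$ rather than discard one of them outright.
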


By \eqref{e},\eqref{ta0},\eqref{tab} and lemma \ref{lem1}, we get the following lemma.

\begin{lem}\label{lem2}
	General dimensional symbols about Dirac operator with torsion are given,
	\begin{align} 
		\sigma_{-2 m}(\Delta_{T,E}^{-m})=&\|\xi\|^{-2 m-2}\sum_{a,b,j,k=1}^{2m} \left(\delta_{a b}-\frac{m}{3} R_{a j b k} x^{j} x^{k}\right) \xi_{a} \xi_{b}+O\left(\mathbf{x}^{3}\right);\label{lem21} \\
		\sigma_{-2m-1}(\Delta_{T,E}^{-m})=& \frac{-2 m \sqrt{-1}}{3}\|\xi\|^{-2 m-2}\sum_{a,b=1}^{2m}  \operatorname{Ric}_{a b} x^{b} \xi_{a}\label{lem22}\\
		&+3m\sqrt{-1}\|\xi\|^{-2 m-2}\sum_{a,j<l=1}^{2m}T(e_a,e_j,e_l)(x_0)c(e_j)c(e_l)\xi_{a} \nonumber\\
		&+\frac{ m \sqrt{-1}}{4}\|\xi\|^{-2 m-2} \sum_{a,b,t,s=1}^{2m} \operatorname{R}_{b a t s}(x_0) c(e_s) c(e_t) x^{b} \xi_{a}\nonumber\\
		&+3m\sqrt{-1}\|\xi\|^{-2 m-2}\sum_{a,b,j<l=1}^{2m}\frac{\partial{T}(\partial_{a},e_j,e_l)}{\partial{x_b}}(x_0)c(e_j)c(e_l) x^b \xi_a +O\left(\mathbf{x}^{2}\right);\nonumber\\
		\sigma_{-2m-2}(\Delta_{T,E}^{-m})=& \frac{m(m+1)}{3}\|\xi\|^{-2 m-4}\sum_{a,b=1}^{2m} \operatorname{Ric}_{a b} \xi_{a} \xi_{b}\label{lem23} \\
		&-\frac{9m(m+1)}{2}\|\xi\|^{-2 m-4}\sum_{a,b,j<l,\hat{j}<\hat{l}=1}^{2m}T(e_a,e_j,e_l)(x_0)T(e_b, e_{\hat{j}},e_{\hat{l}})(x_0)c(e_j)c(e_l)c(e_{\hat{j}})c(e_{\hat{l}})\xi_{a}\xi_b \nonumber\\
		&+\frac{9m}{4}\|\xi\|^{-2 m-2}\sum_{a,j<l,\hat{j}<\hat{l}=1}^{2m}T(e_a,e_j,e_l)(x_0)T(e_a, e_{\hat{j}},e_{\hat{l}})(x_0)c(e_j)c(e_l)c(e_{\hat{j}})c(e_{\hat{l}}) \nonumber\\
		&+\frac{3m}{2}\|\xi\|^{-2 m-2}\sum_{a,j<l=1}^{2m}\frac{\partial{T}(\partial_{a},e_j,e_l)}{\partial{x_a}}(x_0)c(e_j)c(e_l)\nonumber\\
		&-\frac{ m (m+1)}{4}\|\xi\|^{-2 m-4} \sum_{a,b,t,s=1}^{2m}\operatorname{R}_{b a t s}(x_0) c(e_s) c(e_t) \xi_{a} \xi_{b}\nonumber\\
		&-3m(m+1)\|\xi\|^{-2 m-4}\sum_{a,b,j<l=1}^{2m}\frac{\partial{T}(\partial_{a},e_j,e_l)}{\partial{x_b}}(x_0)c(e_j)c(e_l)\xi_{a}\xi_b\nonumber\\
		&-m\|\xi\|^{-2 m-2}(\frac{1}{4} s-\frac{3}{4}\|T\|^2)\nonumber\\
		&-\frac{3m}{4}\|\xi\|^{-2 m-2}\sum_{ i<j<k<t=1}^{2m}dT(e_i,e_j,e_k,e_t)(x_0)c(e_i)c(e_j)c(e_k)c(e_t)+O\left(\mathbf{x}\right),\nonumber
	\end{align}
where ${\rm R}_{a j b k}$ and $\operatorname{Ric}_{a k}$ are the components of the Riemann and Ricci tensor, s is the scalar curvature.
\end{lem}

 \section{The spectral Einstein functional for the Dirac operator with torsion } 
For a pseudo-differential operator $P$, acting on sections of a spinor bundle over an even $n$-dimensional compact Riemannian spin manifold $M$, the analogue of the volume element in noncommutative geometry is the operator  $D^{-n}=: d s^{n} $. And pertinent operators are realized as pseudodifferential operators on the spaces of sections. Extending previous definitions by Connes \cite{co5}, a noncommutative integral was introduced in \cite{FGV2} based on the noncommutative residue \cite{wo2}, combine (1.4) in \cite{co4} and \cite{Ka}, using the definition of the residue:
\begin{align}\label{wres}
	\int P d s^{n}:=\operatorname{Wres} P D^{-n}:=\int_{S^{*} M} \operatorname{tr}\left[\sigma_{-n}\left(P D^{-n}\right)\right](x, \xi),
\end{align}
where  $\sigma_{-n}\left(P D^{-n}\right) $ denotes the  $(-n)$th order piece of the complete symbols of  $P D^{-n} $,  $\operatorname{tr}$  as shorthand of trace.
 This section is designed to get the metric functional and the spectral Einstein functional for the Dirac operator with torsion defined in \cite{DL}.
 
 \begin{defn}{\rm\cite{DL2} }
  If $(\mathscr{L}, D, \mathscr{H})$ is an $n\mbox{-}$summable spectral triple, let  $\Omega_{D}^{1}$ be the $\mathcal{L} $ bimodule of one-forms generated by  $\mathcal{L}$ and $[D, \mathcal{L}]$. Moreover, assume that there exists a generalised algebra of pseudodifferential operators which contains  $\mathcal{L}, D$, and $|D|^{\ell}$ for $\ell \in \mathbb{Z}$ with a tracial state $\mathrm{Wres}$ over this algebra (called a noncommutative residue), which identically vanishes on $ T|D|^{-k}$ for any $ k>n $ and a zero-order operator $ T $ (an operator in the algebra generated by  $\mathcal{L}$  and  $\Omega^{1}(\mathcal{L})$  ). Then, for  u, w $\in \Omega_{D}^{1}(\mathcal{L})$ , we call
  \begin{align}
 \mathscr{A}(u, w):=\mathrm{Wres}\left(u w|D|^{-n}\right),
  \end{align}
 the metric functional, and
   \begin{align}
 \mathscr{B}(u, w):=\mathrm{Wres}\left(u\{D, w\} D|D|^{-n}\right) .
  \end{align}
 the Einstein functional.
\end{defn}

  \begin{prop}
  Let $M$ be an $n=2m$ dimensional ($n\geq 3$) closed spin manifold and  $D_T$ a Dirac operator with torsion, then $(C^{\infty}(M),D_T,L^2(M,S(TM)))$ is an $n\mbox{-}$summable spectral triple.
 \end{prop}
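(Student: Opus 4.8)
The plan is to verify the three defining axioms of a spectral triple together with the summability condition, and the observation that organizes the whole argument is that $D_T$ differs from the Levi-Civita Dirac operator $D$ only by the zeroth-order term $\Phi:=\frac{3}{2}\sum_{1\leqslant j<l<t\leqslant n}T(e_j,e_l,e_t)\,c(e_j)c(e_l)c(e_t)$. Since $T$ is a smooth $3$-form on the compact manifold $M$ and each $c(e_j)c(e_l)c(e_t)$ acts fibrewise with uniformly bounded norm, $\Phi$ is a bounded smooth bundle endomorphism; in particular $D_T$ has the same principal symbol as $D$ and is therefore elliptic of order one.

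First I would note that $C^{\infty}(M)$ acts on $L^{2}(M,S(TM))$ by fibrewise multiplication, and since $M$ is compact each $f$ is bounded, giving a $*$-representation by bounded operators. Next I would establish that $D_T$ is essentially self-adjoint. The endomorphism $\Phi$ is self-adjoint: using $c(e_j)^{*}=-c(e_j)$ and that reversing a product of three distinct anticommuting Clifford generators produces the sign $(-1)^{3\cdot 2/2}=-1$, one finds $(c(e_j)c(e_l)c(e_t))^{*}=c(e_j)c(e_l)c(e_t)$, while $T$ is real-valued. Hence $\Phi$ is a bounded self-adjoint perturbation of the essentially self-adjoint operator $D$, so by Kato--Rellich $D_T$ is essentially self-adjoint with domain $H^{1}(M,S(TM))$. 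Ellipticity then yields the compact resolvent: $(D_T\pm\sqrt{-1})^{-1}$ maps $L^{2}$ into $H^{1}(M,S(TM))$, which embeds compactly into $L^{2}$ by Rellich's lemma, so $(1+D_T^{2})^{-1/2}$ is compact. For the commutators I would write $[D_T,f]=[D,f]+[\Phi,f]$; since $\Phi$ is fibrewise multiplication it commutes with the scalar multiplication by $f$, so $[\Phi,f]=0$, while $[D,f]=c(\mathrm{grad}\,f)$ is bounded Clifford multiplication, giving boundedness of $[D_T,f]$ for every $f\in C^{\infty}(M)$.

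Finally, for $n$-summability I would appeal to the spectral asymptotics of elliptic operators: because $D_T$ is self-adjoint and elliptic of order one on the $n$-dimensional closed manifold $M$, Weyl's law gives the eigenvalue count $N(\lambda)\sim C\lambda^{n}$ for $|D_T|$, whence the singular values obey $\mu_{k}(|D_T|^{-1})=O(k^{-1/n})$; equivalently $|D_T|^{-n}$ lies in the Dixmier ideal $\mathcal{L}^{1,\infty}$, which is precisely the $n$-summability requirement. The only step needing genuine care is the self-adjointness check for $\Phi$; thereafter the bounded perturbation alters neither the order nor the leading symbol of $D$, so that ellipticity, the compact resolvent, and the Weyl asymptotics are all inherited unchanged from the classical Dirac operator.
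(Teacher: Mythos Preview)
Your proof is correct. The paper in fact states this proposition without proof, presumably regarding it as routine since $D_T$ differs from the Levi-Civita Dirac operator only by a smooth zeroth-order endomorphism. Your argument supplies precisely the standard justification: the torsion term $\Phi$ is a bounded self-adjoint perturbation that does not alter the principal symbol, so ellipticity, essential self-adjointness (via Kato--Rellich), compact resolvent (via Rellich embedding), bounded commutators $[D_T,f]=c(\mathrm{grad}\,f)$, and the Weyl eigenvalue asymptotics giving $|D_T|^{-n}\in\mathcal{L}^{1,\infty}$ are all inherited from $D$. There is nothing in the paper to compare your approach against, but what you have written is exactly the expected verification and fills the gap the paper leaves.
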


 \begin{thm}\label{thm}
 	Let $M$ be an $n=2m$ dimensional ($n\geq 3$) spin  manifold, the metric functional $\mathscr{A}_{D_T}$ and the spectral Einstein functional $\mathscr{B}_{D_T}$ are equal to
 \begin{align}
 	\mathscr{A}_{D_T}&=\mathrm{Wres}\bigg(c(v)c(w){D_T^{-n}}\bigg)=-2^{m} \frac{2 \pi^{m}}{\Gamma\left(m\right)}\int_{M} g(v,w)d{\rm Vol}_M;\\
 	\mathscr{B}_{D_T}&=\mathrm{Wres}\bigg(c(v)\big(c(w)D_T+D_Tc(w)\big)D_T^{-n+1}\bigg)\\
 	&=\;2^{m} \frac{2 \pi^{m}}{\Gamma\left(m\right)}\int_{M}\biggl\{-\frac{1}{6}\mathbb{G}(v,w)-\frac{9}{2}\|T\|^2 g(v,w)+\frac{9}{2}\sum_{j,l=1}^{2m} T(v,e_{j},e_{l})T(w,e_{j},e_{l})\nonumber\\
 	&\;\;\;\;+\frac{3}{2}\sum_{a=1}^{2m} \nabla_{e_a}(T)(e_a,v,w)+3\sum_{j=1}^{2m} T(v,\nabla_{e_j}^{L}w,e_j)\biggr\}d{\rm Vol}_M,\nonumber
 \end{align}
 where  $g(v,w)=\sum_{a=1}^{n}v_{a} w_{a} $ and $ \mathbb{G}(v,w)=\operatorname{Ric}(v,w)-\frac{1}{2} s g(v,w)$, $v=\sum_{a=1}^{n} v_{a}e_{a}, w=\sum_{b=1}^{n} w_{b}e_{b}.$
 \end{thm}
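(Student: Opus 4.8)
The plan is to reduce both functionals to the pointwise computation of the degree $(-n)$ homogeneous symbol piece demanded by \eqref{wres}, then to take the fiberwise Clifford trace and integrate over the cosphere $S^{*}M$ and over $M$. Throughout I would work in geodesic normal coordinates centered at a point, identify that point with $x=0$, and feed in the symbol expansions of $\Delta_{T,E}^{-m}=D_T^{-n}$ recorded in Lemma \ref{lem2}. The two standing tools are the Clifford trace identities on the $2^{m}$-dimensional spinor space (odd products of the $c(e_i)$ are traceless, $\mathrm{tr}[c(e_i)c(e_j)]=-2^{m}\delta_{ij}$, and the degree-four and degree-six traces reduce to signed sums of products of Kronecker deltas) together with the sphere moment integrals $\int_{S^{2m-1}}\xi_a\xi_b\,d\xi=\frac{1}{2m}\mathrm{Vol}(S^{2m-1})\delta_{ab}$ and its quartic analogue, where $\mathrm{Vol}(S^{2m-1})=2\pi^{m}/\Gamma(m)$.

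For the metric functional $\mathscr{A}_{D_T}$ the computation is short. Since $c(v)c(w)$ is a zeroth-order operator whose symbol is $\xi$-independent, every term of the composition formula carrying at least one $\partial_\xi$ drops out, so $\sigma_{-n}(c(v)c(w)D_T^{-n})=c(v)c(w)\,\sigma_{-2m}(\Delta_{T,E}^{-m})$. Evaluating the leading symbol of Lemma \ref{lem2} at $x=0$ gives $\sigma_{-2m}(\Delta_{T,E}^{-m})|_{x=0}=\|\xi\|^{-2m}$, which restricts to $1$ on $S^{*}M$. Taking the fiber trace through $\mathrm{tr}[c(v)c(w)]=-2^{m}g(v,w)$ and integrating over the fiber sphere yields $\mathscr{A}_{D_T}=-2^{m}\frac{2\pi^{m}}{\Gamma(m)}\int_{M}g(v,w)\,d\mathrm{Vol}_M$, as claimed.

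For the Einstein functional I would first write $D_T=D+\Phi$ with $\Phi=\frac{3}{2}\sum_{j<l<t}T(e_j,e_l,e_t)c(e_j)c(e_l)c(e_t)$ the zeroth-order torsion endomorphism, and compute the symbol of the second-order operator $P:=c(v)\{D_T,c(w)\}D_T$. Using $\{D_T,c(w)\}=c(w)D_T+D_Tc(w)$, the Clifford relation $c(\xi)c(w)+c(w)c(\xi)=-2\langle\xi,w\rangle$, and the compatibility identity $\nabla_i(c(w))=c(\nabla_i^{L}w)$, the anticommutator decomposes (at $x=0$, where the spin-connection part vanishes) into a scalar first-order part with symbol $-2\sqrt{-1}\langle\xi,w\rangle$, a zeroth-order endomorphism $\sum_i c(e_i)c(\nabla_i^{L}w)$, and the torsion term $\{\Phi,c(w)\}$; assembling these with the second factor $D_T$ produces the homogeneous pieces $p_2,p_1,p_0$ of $\sigma(P)$. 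I would then apply the composition formula to $P\,\Delta_{T,E}^{-m}$ and collect the six families homogeneous of degree $-2m$, namely $p_2\sigma_{-2m-2}$, $\partial_\xi p_2\,D_x\sigma_{-2m-1}$, $\frac{1}{2}\partial_\xi^{2}p_2\,D_x^{2}\sigma_{-2m}$, $p_1\sigma_{-2m-1}$, $\partial_\xi p_1\,D_x\sigma_{-2m}$, and $p_0\sigma_{-2m}$. Evaluation at $x=0$ simplifies matters because the $x$-dependence of $\sigma_{-2m}$ is quadratic, so $D_x\sigma_{-2m}|_{x=0}=0$ and only $D_x^{2}\sigma_{-2m}|_{0}$ survives among its derivatives, while the first $x$-derivatives of $\sigma_{-2m-1}$ supply the Ricci, Riemann, and $\partial T$ linear coefficients of Lemma \ref{lem2}.

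The remaining work is the trace-and-integrate step and the algebra of collecting terms, which is where the main difficulty lies. After the fiber trace, the pure-curvature contributions (from the Riemann and Ricci terms of $\sigma_{-2m-1}$ and $\sigma_{-2m-2}$ and from $p_2$) should organize, after the quadratic and quartic sphere integrals, into the Einstein tensor $\mathbb{G}(v,w)=\operatorname{Ric}(v,w)-\frac{1}{2}s\,g(v,w)$ with coefficient $-\frac{1}{6}$, reproducing the Einstein-tensor term familiar from the torsion-free computations of \cite{DL,DL2}. The delicate part is the torsion sector: the scalar $\frac{1}{4}s-\frac{3}{4}\|T\|^{2}$ piece of $E$, the quadratic torsion terms, the $dT$ term, and the cross terms between $\Phi$, the endomorphism $\sum_i c(e_i)c(\nabla_i^{L}w)$, and the torsion parts of $\sigma_{-2m-1},\sigma_{-2m-2}$ must be tracked through traces of products of up to six Clifford generators and contracted against $v,w,\xi$ so that they collapse exactly into $\|T\|^{2}g(v,w)$, $\sum_{j,l}T(v,e_j,e_l)T(w,e_j,e_l)$, $\sum_a\nabla_{e_a}(T)(e_a,v,w)$, and $\sum_j T(v,\nabla_{e_j}^{L}w,e_j)$ with coefficients $\frac{73}{16},-\frac{25}{16},\frac{11}{4},\frac{17}{4}$. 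In particular I expect the mixed term $T(v,\nabla_{e_j}^{L}w,e_j)$ to come solely from the interaction of $\sum_i c(e_i)c(\nabla_i^{L}w)$ with the torsion symbol, and the covariant-derivative term $\nabla_{e_a}(T)(e_a,v,w)$ solely from the $\partial T$ coefficients, so matching these isolates the hardest cancellations. Finally I would multiply by the overall normalization $2^{m}\frac{2\pi^{m}}{\Gamma(m)}$ and integrate the resulting density over $M$.
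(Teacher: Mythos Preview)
Your proposal is correct and follows the same overall strategy as the paper: compute the required $(-2m)$-order symbol via the composition formula, take Clifford traces, apply the sphere moment integrals, and collect terms. Your treatment of $\mathscr{A}_{D_T}$ is exactly the (one-line) argument the paper leaves implicit.

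For $\mathscr{B}_{D_T}$ there is one organizational difference worth noting. You propose to handle the whole second-order operator $P=c(v)\{D_T,c(w)\}D_T$ at once, first simplifying the anticommutator to $-2\nabla^{S}_{w}+\sum_i c(e_i)c(\nabla^{L}_{i}w)+\{\Phi,c(w)\}$ and then composing $P$ with $D_T^{-2m}$. The paper instead splits additively,
\[
\mathscr{B}_{D_T}=\underbrace{\mathrm{Wres}\big(c(v)c(w)D_T^{-2m+2}\big)}_{\mathscr{B}_1}+\underbrace{\mathrm{Wres}\big(c(v)D_Tc(w)D_T\cdot D_T^{-2m}\big)}_{\mathscr{B}_2},
\]
so that $\mathscr{B}_1$ involves a zeroth-order front factor and reduces immediately to $c(v)c(w)\,\sigma_{-2m}(D_T^{-2m+2})$ (Lemma~\ref{lem2} with $m\mapsto m-1$, no $\partial_\xi$-terms), while $\mathscr{B}_2$ is computed by first assembling $\sigma_0,\sigma_1,\sigma_2$ of $\mathcal{A}\mathcal{B}:=c(v)D_T\cdot c(w)D_T$ and then running the six-term composition with $D_T^{-2m}$ exactly as you describe. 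Your route trades that split for a slightly cleaner principal symbol $p_2=2\langle\xi,w\rangle\,c(v)c(\xi)$ (which indeed equals the paper's $c(v)c(w)\|\xi\|^{2}-c(v)c(\xi)c(w)c(\xi)$); the paper's split buys a simpler $\mathscr{B}_1$ at the cost of carrying the non-scalar $\sigma_2(\mathcal{AB})=-c(v)c(\xi)c(w)c(\xi)$ through $\mathscr{B}_2$. Either bookkeeping leads to the same six families and the same curvature and torsion contractions.

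One small correction to your narrative: the terms $\sum_a\nabla_{e_a}(T)(e_a,v,w)$ and $\sum_j T(v,\nabla^{L}_{e_j}w,e_j)$ do \emph{not} each come from a single source. In the paper's organization the first receives contributions $-\tfrac14,\ \tfrac{3(m-1)}{2},\ -\tfrac{3(m+1)}{2},\ 6$ (summing to $\tfrac{11}{4}$) and the second receives $-\tfrac14$ and $\tfrac92$ (summing to $\tfrac{17}{4}$); in your organization the analogous cancellations will occur across the $p_1\sigma_{-2m-1}$, $p_2\sigma_{-2m-2}$, $\partial_\xi p_2\,D_x\sigma_{-2m-1}$, and $p_0\sigma_{-2m}$ pieces. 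So expect cross-cancellations rather than isolated contributions when you carry out the algebra.
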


\begin{proof}
The proof of the $\mathscr{A}_{D_T}$ formula for the measure function is obvious. Below we prove that the spectral Einstein function $\mathscr{B}_{D_T}$, splitting it into two parts as follows:
\begin{align}
	&\mathscr{B}_{1}=\mathrm{Wres}\bigg(c(v)c(w){D_T^{-n+2}}\bigg);\\
	&\mathscr{B}_{2}=\mathrm{Wres}\bigg(c(v)D_Tc(w) D_T D_T^{-n}\bigg).
\end{align}

{\bf Part I)} $\mathscr{B}_{1}=\mathrm{Wres}\bigg(c(v)c(w){D_T^{-n+2}}\bigg)$, $\mathcal{P}_{1}=c(v)c(w)$.
Let  $n=2 m $, by \eqref{wres}, we need to compute  $\int_{S^{*} M} \operatorname{tr}\left[\sigma_{-2 m}\left(\mathcal{P}_{1} D_T^{-2 m+2}\right)\right](x, \xi) $. Based on the algorithm yielding the principal symbol of a product of pseudo-differential operators in terms of the principal symbols of the factors,  by (2.14) in lemma \ref{lem2} we have

\begin{align}\label{d1}
	\sigma_{-2m}(D_T^{-2 m+2})=&\;\;\frac{m(m-1)}{3}\|\xi\|^{-2 m-2}\sum_{a,b=1}^{2m} \operatorname{Ric}_{a b} \xi_{a} \xi_{b} \\
	&-\frac{9m(m-1)}{2}\|\xi\|^{-2 m-2}\sum_{a,b,j<l,\hat{j}<\hat{l}=1}^{2m}T(e_a,e_j,e_l)(x_0)T(e_b, e_{\hat{j}},e_{\hat{l}})(x_0)c(e_j)c(e_l)c(e_{\hat{j}})c(e_{\hat{l}})\xi_{a}\xi_b \nonumber\\
	&+\frac{9(m-1)}{4}\|\xi\|^{-2 m}\sum_{a,j<l,\hat{j}<\hat{l}=1}^{2m}T(e_a,e_j,e_l)(x_0)T(e_a, e_{\hat{j}},e_{\hat{l}})(x_0)c(e_j)c(e_l)c(e_{\hat{j}})c(e_{\hat{l}}) \nonumber\\
	&+\frac{3(m-1)}{2}\|\xi\|^{-2 m}\sum_{a,j<l=1}^{2m}\frac{\partial{T}(\partial_{a},e_j,e_l)}{\partial{x_a}}(x_0)c(e_j)c(e_l)\nonumber\\
	&-\frac{ m (m-1)}{4}\|\xi\|^{-2 m-2} \sum_{a,b,t,s=1}^{2m}\operatorname{R}_{b a t s}(x_0) c(e_s) c(e_t) \xi_{a} \xi_{b}\nonumber\\
	&-3m(m-1)\|\xi\|^{-2 m-2}\sum_{a,b,j<l=1}^{2m}\frac{\partial{T}(\partial_{a},e_j,e_l)}{\partial{x_b}}(x_0)c(e_j)c(e_l)\xi_{a}\xi_b\nonumber\\
	&-(m-1)\|\xi\|^{-2 m}(\frac{1}{4} \operatorname{R}^{g}-\frac{3}{4}\|T\|^2)\nonumber\\
	&-\frac{3(m-1)}{4}\|\xi\|^{-2 m}\sum_{ i<j<k<t=1}^{2m}dT(e_i,e_j,e_k,e_t)(x_0)c(e_i)c(e_j)c(e_k)c(e_t)+O\left(\mathbf{x}\right).\nonumber
\end{align}
By \eqref{d1} and $\mathcal{P}_{1}=c(v)c(w)$, we get
\begin{align}\label{PD}
	\sigma_{-2 m}\left(\mathcal{P}_{1}  D^{-2 m+2}_T\right) & = \;\;\frac{m(m-1)}{3}\|\xi\|^{-2 m-2}\sum_{a,b=1}^{2m} \operatorname{Ric}_{a b} \xi_{a} \xi_{b}c(v)c(w) \\
	&-\frac{9m(m-1)}{2}\|\xi\|^{-2 m-2}\sum_{a,b,j<l,\hat{j}<\hat{l}=1}^{2m}T(e_b,e_j,e_l)(x_0)T(e_a, e_{\hat{j}},e_{\hat{l}})(x_0)c(v)c(w)c(e_j)c(e_l)c(e_{\hat{j}})c(e_{\hat{l}})\xi_{a}\xi_b \nonumber\\
	&+\frac{9(m-1)}{4}\|\xi\|^{-2 m}\sum_{a,j<l,\hat{j}<\hat{l}=1}^{2m}T(e_a,e_j,e_l)(x_0)T(e_a, e_{\hat{j}},e_{\hat{l}})(x_0)c(v)c(w)c(e_j)c(e_l)c(e_{\hat{j}})c(e_{\hat{l}}) \nonumber\\
	&+\frac{3(m-1)}{2}\|\xi\|^{-2 m}\sum_{a,j<l=1}^{2m}\frac{\partial{T}(\partial_{a},e_j,e_l)}{\partial{x_a}}(x_0)c(v)c(w)c(e_j)c(e_l)\nonumber\\
	&-\frac{ m (m-1)}{4}\|\xi\|^{-2 m-2} \sum_{a,b,t,s=1}^{2m}\operatorname{R}_{b a t s}(x_0)c(v)c(w) c(e_s) c(e_t) \xi_{a} \xi_{b}\nonumber\\
	&-3m(m-1)\|\xi\|^{-2 m-2}\sum_{a,b,j<l=1}^{2m}\frac{\partial{T}(\partial_{a},e_j,e_l)}{\partial{x_b}}(x_0)c(v)c(w)c(e_j)c(e_l)\xi_{a}\xi_b\nonumber\\
	&-(m-1)\|\xi\|^{-2 m}(\frac{1}{4} \operatorname{R}^{g}-\frac{3}{4}\|T\|^2)c(v)c(w)\nonumber\\
	&-\frac{3(m-1)}{4}\|\xi\|^{-2 m}\sum_{ i<j<k<t=1}^{2m}dT(e_i,e_j,e_k,e_t)(x_0)c(v)c(w)c(e_i)c(e_j)c(e_k)c(e_t)+O\left(\mathbf{x}\right).\nonumber
\end{align}

Below, we compute each term of  $\int_{\|\xi\|=1} \operatorname{tr} [\sigma_{-2m}(\mathcal{P}_{1}D^{-2m+2})](x, \xi) \sigma(\xi)$ in turn.  
Based on the relation of the Clifford action $\operatorname{tr}\big(c(\mathcal{X})c(\mathcal{Y})\big)=-g(\mathcal{X},\mathcal{Y})$ and $\int_{\|\xi\|=1} \xi_{a} \xi_{b}\sigma(\xi)=\frac{1}{n}\delta_{a}^{b}{\rm Vol}(S^{n-1})$, we get the following equations.

\noindent{\bf (I-$\mathbb{A}$)}
\begin{align}
	&\int_{\|\xi\|=1}\operatorname{tr} \biggl\{\frac{m(m-1)}{3}\|\xi\|^{-2 m-2}\sum_{a,b=1}^{2m} \operatorname{Ric}_{a b} \xi_{a} \xi_{b}c(v)c(w)\biggr\}(x_0)\sigma(\xi)\\
	&=-\frac{m-1}{6}s g(v,w){\rm tr}[id]{\rm Vol}(S^{n-1}).\nonumber
\end{align}
where $s$ be the scalar curvature.

\noindent{\bf (I-$\mathbb{B}$)}(See Appendix)

Let $v=\sum_{p=1}^{2m}v_p e_p, w=\sum_{q=1}^{2m}w_q e_q$, and based on the relation of the Clifford action and ${\rm tr}\mathcal{XY}={\rm tr}\mathcal{YX}$, we can obtain the equality
\begin{align}\label{chu}
	&\operatorname{tr}\bigg(\sum_{j\neq l,\hat{j}\neq \hat{l}=1}^{2m}c(v)c(w)c(e_j)c(e_l)c(e_{\hat{j}})c(e_{\hat{l}})\bigg)\\
	&=\sum_{j\neq l,\hat{j}\neq \hat{l}=1}^{2m}\bigg[v_{\hat{l}} w_l \delta_{j}^{\hat{j}}-v_{\hat{l}} w_j \delta_{l}^{\hat{j}}-v_{\hat{j}} w_l \delta_{j}^{\hat{l}}+v_{\hat{j}} w_j \delta_{l}^{\hat{l}}-v_l w_{\hat{l}} \delta_{j}^{\hat{j}}+v_l w_{\hat{j}} \delta_{j}^{\hat{l}}+v_j w_{\hat{l}} \delta_{l}^{\hat{j}}-v_j w_{\hat{j}} \delta_{l}^{\hat{l}}\nonumber\\
	&\;\;\;\;\;\;\;\;\;\;\;\;\;\;\;\;\;-\delta_{j}^{\hat{l}}\delta_{l}^{\hat{j}}g(v,w)+\delta_{j}^{\hat{j}}\delta_{l}^{\hat{l}}g(v,w)\bigg]{\rm tr}[id],\nonumber
\end{align}
where $v_l=g(v,e_l), w_j=g(w,e_j)$, then
\begin{align}\label{1b}
	&\int_{\|\xi\|=1} {\rm tr}\biggl\{-\frac{9m(m-1)}{2}\|\xi\|^{-2 m-2}\sum_{a,b,j<l,\hat{j}<\hat{l}=1}^{2m}T(e_a,e_j,e_l)T(e_b, e_{\hat{j}},e_{\hat{l}})c(v)c(w)c(e_j)c(e_l)c(e_{\hat{j}})c(e_{\hat{l}})\xi_{a}\xi_b\biggr\}(x_0)\sigma(\xi)\\
	&=-\frac{9(m-1)}{16}\sum_{a,b,j\neq l,\hat{j}\neq \hat{l}=1}^{2m}T(e_a,e_j,e_l)T(e_a, e_{\hat{j}},e_{\hat{l}}){\rm tr}\bigg(c(v)c(w)c(e_j)c(e_l)c(e_{\hat{j}})c(e_{\hat{l}})\bigg){\rm Vol}(S^{n-1})\nonumber\\
	&=-\frac{9(m-1)}{8}\sum_{a,j\neq l=1}^{2m}T^2(e_a,e_j,e_l)g(v,w){\rm tr}[id]{\rm Vol}(S^{n-1}).\nonumber
\end{align}

\noindent{\bf (I-$\mathbb{C}$)}

Similar to {\bf (I-$\mathbb{B}$)}, one obtains
\begin{align}
	&\int_{\|\xi\|=1} {\rm tr}\biggl\{\frac{9(m-1)}{4}\|\xi\|^{-2 m}\sum_{a,j<l,\hat{j}<\hat{l}=1}^{2m}T(e_a,e_j,e_l)T(e_a, e_{\hat{j}},e_{\hat{l}})c(v)c(w)c(e_j)c(e_l)c(e_{\hat{j}})c(e_{\hat{l}})\biggr\}(x_0)\sigma(\xi)\\
	&=\;\;\frac{9(m-1)}{8}\sum_{a,j\neq l=1}^{2m}T^2(e_a,e_j,e_l)g(v,w){\rm tr}[id]{\rm Vol}(S^{n-1}).\nonumber
\end{align}

\noindent{\bf (I-$\mathbb{D}$)}(See Appendix)

Since
\begin{align}
	&{\rm tr}\bigg(\sum_{j\neq l=1}^{2m}c(v)c(w)c(e_j)c(e_l)\bigg)=\sum_{j\neq l=1}^{2m}\bigg[v_j w_l - v_l w_j\bigg]{\rm tr}[id],
\end{align}
and
\begin{align}\label{pt}
	\frac{\partial{T}(\partial_{a},e_j,e_l)} {\partial x_a}(x_0)=\nabla_{e_a}(T)(e_{a},e_{j},e_{l})(x_0),
\end{align}
then
\begin{align}\label{2d}
	&\int_{\|\xi\|=1} {\rm tr}\biggl\{\frac{3(m-1)}{2}\|\xi\|^{-2 m}\sum_{a,j<l=1}^{2m}\frac{\partial{T}(\partial{a},e_j,e_l)}{\partial{x_a}}c(v)c(w)c(e_j)c(e_l)\biggr\}(x_0)\sigma(\xi)\\
	&=\;\;\frac{3(m-1)}{4}\sum_{a,j\neq l=1}^{2m}\frac{\partial{T}(\partial{a},e_j,e_l)}{\partial{x_a}}(x_0){\rm tr}\bigg(c(v)c(w)c(e_j)c(e_l)\bigg){\rm Vol}(S^{n-1})\nonumber\\
	&=\;\;\frac{3(m-1)}{4}\sum_{a,j\neq l=1}^{2m}\nabla_{e_a}(T)(e_{a},e_{j},e_{l})(x_0)\bigg(v_j w_l - v_l w_j\bigg){\rm tr}[id]{\rm Vol}(S^{n-1})\nonumber\\
	&=\;\;\frac{3(m-1)}{2}\sum_{a=1}^{2m}\nabla_{e_a}(T)(e_{a},v,w){\rm tr}[id]{\rm Vol}(S^{n-1}).\nonumber
\end{align}

\noindent{\bf (I-$\mathbb{E}$)}

Since 
\begin{align}
	&\int_{\|\xi\|=1} \bigg(\|\xi\|^{-2 m-2} \sum_{a,b,t,s=1}^{2m}\operatorname{R}_{b a t s}\xi_{a} \xi_{b}\bigg)(x_0)\sigma(\xi)\\
	&=\frac{ 1}{2m} \sum_{a,t,s=1}^{2m}\operatorname{R}_{a a t s} {\rm Vol}(S^{n-1})\nonumber\\
	&=\;\;0,\nonumber
\end{align}
then
\begin{align}
	&\int_{\|\xi\|=1} {\rm tr}\biggl\{-\frac{ m (m-1)}{4}\|\xi\|^{-2 m-2} \sum_{a,b,t,s=1}^{2m}\operatorname{R}_{b a t s}c(v)c(w) c(e_s) c(e_t) \xi_{a} \xi_{b}\biggr\}(x_0)\sigma(\xi)=\;0.
\end{align}

\noindent{\bf (I-$\mathbb{F}$)}

Similar to \eqref{2d}
\begin{align}
	&\int_{\|\xi\|=1} {\rm tr}\biggl\{-3m(m-1)\|\xi\|^{-2 m-2}\sum_{a,b,j<l=1}^{2m}\frac{\partial{T}(\partial_{a},e_j,e_l)}{\partial{x_b}}c(v)c(w)c(e_j)c(e_l)\xi_{a}\xi_b\biggr\}(x_0)\sigma(\xi)\\
	&=\;\;-\frac{3(m-1)}{2}\sum_{a=1}^{2m}\nabla_{e_a}(T)(e_{a},v,w){\rm tr}[id]{\rm Vol}(S^{n-1}).\nonumber
\end{align}

\noindent{\bf (I-$\mathbb{G}$)}

Similarly to {\bf (I-$\mathbb{A}$)}, we get
\begin{align}
	&\int_{\|\xi\|=1} {\rm tr}\biggl\{-(m-1)\|\xi\|^{-2 m}(\frac{1}{4} s-\frac{3}{4}\|T\|^2)c(v)c(w)\biggr\}(x_0)\sigma(\xi)\\
	&=(m-1)(\frac{1}{4} s-\frac{3}{4}\|T\|^2) g(v,w){\rm tr}[id]{\rm Vol}(S^{n-1}).\nonumber
\end{align}

\noindent{\bf (I-$\mathbb{H}$)}

Let $v=\sum_{p=1}^{2m}v_p e_p, w=\sum_{q=1}^{2m}w_q e_q$, and based on the relation of the Clifford action and ${\rm tr}\mathcal{XY}={\rm tr}\mathcal{YX}$, we can obtain the equality
\begin{align}
	&\operatorname{tr}\bigg(\sum_{i\neq j \neq k \neq t=1}^{2m}c(v)c(w)c(e_i)c(e_j)c(e_k)c(e_t)\bigg)=0,
\end{align}
then
\begin{align}
	&\int_{\|\xi\|=1} {\rm tr}\biggl\{-\frac{3(m-1)}{4}\|\xi\|^{-2 m}\sum_{i<j<k<t=1}^{2m}dT(e_i,e_j,e_k,e_t)(x_0)c(v)c(w)c(e_i)c(e_j)c(e_k)c(e_t)\biggr\}(x_0)\sigma(\xi)\\
	&=\;\;0.\nonumber
\end{align}

Summing from {\bf (I-$\mathbb{A}$)} to {\bf (I-$\mathbb{H}$)} in turn, we get
\begin{align}\label{zpdt}
	&\int_{\|\xi\|=1} {\rm tr}\biggl\{\sigma_{-2 m}\left(c(v)c(w)  D^{-2 m+2}_T\right)\biggr\}(x_0)\sigma(\xi)\\
	&=\frac{m-1}{12}s g(v,w){\rm tr}[id]{\rm Vol}(S^{n-1})-\frac{3(m-1)}{4}\|T\|^2 g(v,w){\rm tr}[id]{\rm Vol}(S^{n-1}).\nonumber
\end{align}

Since ${\rm tr}[id]=2^{m}$ and ${\rm Vol}(S^{n-1})=\frac{2 \pi^{m}}{\Gamma\left(m\right)}$, by \eqref{wres} we obtain
\begin{align}\label{z1}
	\mathscr{B}_{1}=&\mathrm{Wres}\bigg(c(v)c(w){D_T^{-n+2}}\bigg)\\
	=&\;2^{m} \frac{2 \pi^{m}}{\Gamma\left(m\right)}\int_{M}\biggl\{\frac{m-1}{12}s g(v,w)-\frac{3(m-1)}{4}\|T\|^2 g(v,w)\biggr\}d{\rm Vol}_M\nonumber.
\end{align}

{\bf Part II)} $\mathscr{B}_{2}=\mathrm{Wres}\bigg(c(v)D_Tc(w) D_T D_T^{-n}\bigg)$. $\mathcal{P}_{2}=c(v)D_Tc(w)D_T $, and  $c(v)D_T:=\mathcal{A},c(w)D_T:=\mathcal{B} .$
Let  $n=2 m $, by \eqref{wres}, we need to compute $\int_{S^{*} M} \operatorname{tr}\left[\sigma_{-2 m}\left(\mathcal{P}_{1} D_T^{-2 m}\right)\right](x, \xi) $. Based on the algorithm yielding the principal symbol of a product of pseudo-differential operators in terms of the principal symbols of the factors, we have
\begin{align}\label{ABD}
	\sigma_{-2 m}\left(\mathcal{A} \mathcal{B} D^{-2 m}_T\right) & =\left\{\sum_{|\alpha|=0}^{\infty} \frac{(-i)^{|\alpha|}}{\alpha!} \partial_{\xi}^{\alpha}[\sigma(\mathcal{A} \mathcal{B})] \partial_{x}^{\alpha}\left[\sigma\left(D^{-2 m}_T\right)\right]\right\}_{-2 m} \\
	& =\sigma_{0}(\mathcal{A} \mathcal{B}) \sigma_{-2 m}\left(D^{-2 m}_T\right)+\sigma_{1}(\mathcal{A} \mathcal{B}) \sigma_{-2 m-1}\left(D^{-2 m}_T\right)+\sigma_{2}(\mathcal{A} \mathcal{B}) \sigma_{-2 m-2}\left(D^{-2 m}_T\right) \nonumber\\
	& +(-i) \sum_{j=1}^{2m} \partial_{\xi_{j}}\left[\sigma_{2}(\mathcal{A} \mathcal{B})\right] \partial_{x_{j}}\left[\sigma_{-2 m-1}\left(D^{-2 m}_T\right)\right]+(-i) \sum_{j=1}^{2m} \partial_{\xi_{j}}\left[\sigma_{1}(\mathcal{A} \mathcal{B})\right] \partial_{x_{j}}\left[\sigma_{-2 m}\left(D^{-2 m}_T\right)\right] \nonumber\\
	& -\frac{1}{2} \sum_{j ,l=1}^{2m} \partial_{\xi_{j}} \partial_{\xi_{l}}\left[\sigma_{2}(\mathcal{A} \mathcal{B})\right] \partial_{x_{j}} \partial_{x_{l}}\left[\sigma_{-2 m}\left(D^{-2 m}_T\right)\right] .\nonumber
\end{align}

\begin{lem}\label{www}{\rm \cite{WWw}}
	The symbols of $D_T$ are given
	\begin{align}
		&\sigma_{0}(D_T)=-\frac{1}{4} \sum_{p,s,t=1}^{2m} w_{s,t}(e_p)c(e_p)c(e_s)c(e_t)+\frac{3}{2} \sum_{f<\alpha<\beta=1}^{2m}T(e_{f},e_{\alpha},e_{\beta})c(e_{f})c(e_{\alpha})c(e_{\beta});\nonumber\\
		&\sigma_{1}(D_T)=\sqrt{-1}c(\xi). \nonumber
	\end{align}
\end{lem}
By lemma \ref{www} and $\mathcal{A}=c(v)D_T,\mathcal{B}=c(w)D_T$, we obtain the following lemma.
 \begin{lem}
The symbols of  $\mathcal{A}$  and  $\mathcal{B}$  are given
\begin{align}
	&\sigma_{0}(\mathcal{A})=-\frac{1}{4} \sum_{p,s,t=1}^{2m} w_{s,t}(e_p)c(v)c(e_p)c(e_s)c(e_t)+\frac{3}{2} \sum_{f<\alpha<\beta=1}^{2m}T(e_{f},e_{\alpha},e_{\beta})c(v)c(e_{f})c(e_{\alpha})c(e_{\beta});\nonumber\\
	&\sigma_{1}(\mathcal{A})=\sqrt{-1}c(v)c(\xi); \nonumber\\
	&\sigma_{0}(\mathcal{B})=-\frac{1}{4} \sum_{p,s,t=1}^{2m} w_{s,t}(e_p)c(w)c(e_p)c(e_s)c(e_t)+\frac{3}{2} \sum_{f<\alpha<\beta=1}^{2m}T(e_{f},e_{\alpha},e_{\beta})c(w)c(e_{f})c(e_{\alpha})c(e_{\beta});\nonumber\\
	&\sigma_{1}(\mathcal{B})=\sqrt{-1}c(w)c(\xi)\nonumber.
\end{align}
 \end{lem}

Further, by the composition formula of pseudodifferential operators, we get the following lemma.

 \begin{lem}\label{AB}
	The symbols of  $\mathcal{AB}$  are given	
	\begin{align}
		\sigma_{0}(\mathcal{AB})(x_0)=&\sigma_{0}(\mathcal{A}) \sigma_{0}(\mathcal{B})(x_0)+(-i) \partial_{\xi_{j}}\left[\sigma_{1}(\mathcal{A})\right] \partial_{x_{j}}\left[\sigma_{0}(\mathcal{B})\right](x_0)+(-i) \partial_{\xi_{j}}\left[\sigma_{0}(\mathcal{A})\right] \partial_{x_{j}}\left[\sigma_{1}(\mathcal{B})\right](x_0)\label{lemAB1} \\
		=&\;\;\;\frac{1}{16}\sum_{p,s,t,\hat{p},\hat{s},\hat{t}=1}^{2m} w_{s,t}(e_p) w_{\hat{s}, \hat{t}}(e_{\hat{p}})c(v)c(e_p)c(e_s)c(e_t)c(w)c(e_{\hat{p}})c(e_{\hat{s}})c(e_{\hat{t}})\nonumber\\
		&-\frac{3}{8}\sum_{f<\alpha<\beta,\hat{p},\hat{s},\hat{t}=1}^{2m} T(e_{f},e_{\alpha},e_{\beta}) w_{\hat{s},\hat{t}}(e_{\hat{p}})c(v)c(e_{f})c(e_{\alpha})c(e_{\beta})c(w)c(e_{\hat{p}})c(e_{\hat{s}})c(e_{\hat{t}})\nonumber\\
		&-\frac{3}{8}\sum_{p,s,t,\hat{f}<\hat{\alpha}<\hat{\beta}=1}^{2m} w_{s,t}(e_p)T(e_{\hat{f}},e_{\hat{\alpha}},e_{\hat{\beta}}) c(v)c(e_p)c(e_s)c(e_t)c(w)c(e_{\hat{f}})c(e_{\hat{\alpha}})c(e_{\hat{\beta}})\nonumber\\
		&+\frac{9}{4}\sum_{f<\alpha<\beta,\hat{f}<\hat{\alpha}<\hat{\beta}=1}^{2m} T(e_{f},e_{\alpha},e_{\beta})T(e_{\hat{f}},e_{\hat{\alpha}},e_{\hat{\beta}}) c(v)c(e_{f})c(e_{\alpha})c(e_{\beta})c(w)c(e_{\hat{f}})c(e_{\hat{\alpha}})c(e_{\hat{\beta}})\nonumber\\
		&+\frac{1}{8}\sum_{j,p,t,s=1}^{2m} {\operatorname{R}}_{jpts}c(v)c(dx_j)c(w)c(e_p)c(e_s)c(e_t)\nonumber\\
		&-\frac{1}{4}\sum_{p,s,t,j,\gamma=1}^{2m}w_{s,t}(e_p) \partial x_j(w_\gamma)c(v)c(dx_j)c(e_\gamma)c(e_p)c(e_s)c(e_t) \nonumber\\
		&+\frac{3}{2}\sum_{f<\alpha<\beta,j=1}^{2m}\frac{\partial{T}(e_{f},e_{\alpha},e_{\beta})} {\partial x_j}c(v)c(dx_j)c(e_w)c(e_{f})c(e_{\alpha})c(e_{\beta})\nonumber\\
		&+\frac{3}{2}\sum_{f<\alpha<\beta,j,\gamma=1}^{2m}{T}(e_{f},e_{\alpha},e_{\beta})\partial x_j(w_\gamma)c(v)c(dx_j)c(e_\gamma)c(e_{f})c(e_{\alpha})c(e_{\beta});\nonumber\\
		\sigma_{1}(\mathcal{AB})=&\sigma_{1}(\mathcal{A}) \sigma_{0}(\mathcal{B})+\sigma_{0}(\mathcal{A}) \sigma_{1}(\mathcal{B})+(-i) \partial_{\xi_{j}}\left[\sigma_{1}(\mathcal{A})\right] \partial_{x_{j}}\left[\sigma_{1}(\mathcal{B})\right] \label{lemAB2}\\
		=&-\frac{\sqrt{-1}}{4}\sum_{p,s,t=1}^{2m} w_{s,t}(e_p) c(v)c(\xi)c(w)c(e_p)c(e_s)c(e_t)\nonumber\\
		&+\frac{3\sqrt{-1}}{2}\sum_{f<\alpha<\beta=1}^{2m}{T}(e_{f},e_{\alpha},e_{\beta})c(v)c(\xi)c(w)c(e_{f})c(e_{\alpha})c(e_{\beta})\nonumber\\
		&-\frac{\sqrt{-1}}{4}\sum_{p,s,t=1}^{2m} w_{s,t}(e_p) c(v)c(e_p)c(e_s)c(e_t)c(w)c(\xi)\nonumber\\
		&+\frac{3\sqrt{-1}}{2}\sum_{f<\alpha<\beta=1}^{2m}{T}(e_{f},e_{\alpha},e_{\beta})c(v)c(e_{f})c(e_{\alpha})c(e_{\beta})c(w)c(\xi)\nonumber\\
		&+\sqrt{-1}\sum_{j,\gamma=1}^{2m}\partial x_j(w_\gamma)c(v)c(dx_j)c(e_\gamma)c(\xi);\nonumber\\
		\sigma_{2}(\mathcal{AB})=&\sigma_{1}(\mathcal{A})\sigma_{1}(\mathcal{B})=-c(v)c(\xi)c(w)c(\xi).\label{lemAB3}
	\end{align}
\end{lem}

Next, with \eqref{ABD}, we compute each term of  $\int_{\|\xi\|=1} \operatorname{tr} [\sigma_{-2m}(\mathscr{AB}D_T^{-2m+2})](x, \xi) \sigma(\xi)$ in turn.

\noindent {\bf (II-1)} For $\sigma_{0}(\mathcal{AB}) \sigma_{-2 m}\left(D_{T}^{-2 m}\right)$:

According to \eqref{lem21} in lemma \ref{lem2} and \eqref{lemAB1} in lemma \ref{AB} , where $w_{s,t}(e_p)(x_0)=0$, we get 
\begin{align}\label{0-2m}
&\sigma_{0}(\mathcal{AB}) \sigma_{-2 m}\left(D_T^{-2 m}\right)\left(x_{0}\right)\\
&=\frac{9}{4}\|\xi\|^{-2m}\sum_{f<\alpha<\beta,\hat{f}<\hat{\alpha}<\hat{\beta}=1}^{2m}  T(e_{f},e_{\alpha},e_{\beta})T(e_{\hat{f}},e_{\hat{\alpha}},e_{\hat{\beta}}) c(v)c(e_{f})c(e_{\alpha})c(e_{\beta})c(w)c(e_{\hat{f}})c(e_{\hat{\alpha}})c(e_{\hat{\beta}})\nonumber\\
&\;\;\;\;+\frac{1}{8}\|\xi\|^{-2m}\sum_{j,p,t,s=1}^{2m}  {\operatorname{R}}_{jpts}c(v)c(dx_j)c(w)c(e_p)c(e_s)c(e_t)\nonumber\\
&\;\;\;\;+\frac{3}{2}\|\xi\|^{-2m}\sum_{f<\alpha<\beta,j=1}^{2m}  \frac{\partial{T}(e_{f},e_{\alpha},e_{\beta})} {\partial x_j}c(v)c(dx_j)c(e_w)c(e_{f})c(e_{\alpha})c(e_{\beta})\nonumber\\
&\;\;\;\;+\frac{3}{2}\|\xi\|^{-2m}\sum_{f<\alpha<\beta,j,\gamma=1}^{2m} {T}(e_{f},e_{\alpha},e_{\beta})\partial x_j(w_\gamma)c(v)c(dx_j)c(e_\gamma)c(e_{f})c(e_{\alpha})c(e_{\beta}).\nonumber
\end{align}

 \noindent{\bf (II-1-$\mathbb{A}$)}(See Appendix)
 
Let $v=\sum_{p=1}^{2m}v_p e_p, w=\sum_{q=1}^{2m}w_q e_q$, and based on the relation of the Clifford action and ${\rm tr}\mathcal{XY}={\rm tr}\mathcal{YX}$, we can obtain the equality
\begin{align}\label{t1}
	&{\rm tr} \bigg(\sum_{f\neq\alpha\neq\beta,\hat{f}\neq\hat{\alpha}\neq\hat{\beta}=1}^{2m}c(v)c(e_{f})c(e_{\alpha})c(e_{\beta})c(w)c(e_{\hat{f}})c(e_{\hat{\alpha}})c(e_{\hat{\beta}})\bigg)\\
	&=\sum_{f\neq\alpha\neq\beta,\hat{f}\neq\hat{\alpha}\neq\hat{\beta}=1}^{2m}\bigg[v_f \delta_{\alpha}^{\hat{\beta}} \delta_{\hat{\alpha}}^{\beta} w_{\hat{f}}-v_f \delta_{\alpha}^{\hat{\beta}} \delta_{\hat{f}}^{\beta} w_{\hat{\alpha}}-v_f \delta_{\alpha}^{\hat{\alpha}} \delta_{\hat{\beta}}^{\beta} w_{\hat{f}}+v_f \delta_{\alpha}^{\hat{\alpha}} \delta_{\hat{f}}^{\beta} w_{\hat{\beta}}+v_f \delta_{\alpha}^{\hat{f}} \delta_{\hat{\beta}}^{\beta} w_{\hat{\alpha}}-v_f \delta_{\alpha}^{\hat{f}} \delta_{\hat{\alpha}}^{\beta} w_{\hat{\beta}}\nonumber\\
	&\;\;\;\;\;\;\;\;\;\;\;\;\;\;\;\;\;\;\;\;\;\;\;\;\;\;\;\;-v_{\alpha} \delta_{f}^{\hat{\beta}} \delta_{\hat{\alpha}}^{\beta} w_{\hat{f}}+v_{\alpha} \delta_{f}^{\hat{\beta}} \delta_{\hat{f}}^{\beta} w_{\hat{\alpha}}+v_{\alpha} \delta_{f}^{\hat{\alpha}} \delta_{\hat{\beta}}^{\beta} w_{\hat{f}}-v_{\alpha} \delta_{f}^{\hat{\alpha}} \delta_{\hat{f}}^{\beta} w_{\hat{\beta}}-v_{\alpha} \delta_{f}^{\hat{f}} \delta_{\hat{\beta}}^{\beta} w_{\hat{\alpha}}+v_{\alpha} \delta_{f}^{\hat{f}} \delta_{\hat{\alpha}}^{\beta} w_{\hat{\beta}}\nonumber\\
	&\;\;\;\;\;\;\;\;\;\;\;\;\;\;\;\;\;\;\;\;\;\;\;\;\;\;\;\;+v_{\beta} \delta_{f}^{\hat{\beta}} \delta_{\hat{\alpha}}^{\alpha} w_{\hat{f}}-v_{\beta} \delta_{f}^{\hat{\beta}} \delta_{\hat{f}}^{\alpha} w_{\hat{\alpha}}-v_{\beta} \delta_{f}^{\hat{\alpha}} \delta_{\hat{\beta}}^{\alpha} w_{\hat{f}}+v_{\beta} \delta_{f}^{\hat{\alpha}} \delta_{\hat{f}}^{\alpha} w_{\hat{\beta}}+v_{\beta} \delta_{f}^{\hat{f}} \delta_{\hat{\beta}}^{\alpha} w_{\hat{\alpha}}-v_{\beta} \delta_{f}^{\hat{f}} \delta_{\hat{\alpha}}^{\alpha} w_{\hat{\beta}}\nonumber\\
	&\;\;\;\;\;\;\;\;\;\;\;\;\;\;\;\;\;\;\;\;\;\;\;\;\;\;\;\;+v_{\hat{f}} \delta_{f}^{\hat{\beta}} \delta_{\hat{\alpha}}^{\alpha} w_{\beta}-v_{\hat{f}} \delta_{f}^{\hat{\beta}} \delta_{\hat{\alpha}}^{\beta} w_{\alpha}-v_{\hat{f}} \delta_{f}^{\hat{\alpha}} \delta_{\hat{\beta}}^{\alpha} w_{\beta}+v_{\hat{f}} \delta_{f}^{\hat{\alpha}} \delta_{\hat{\beta}}^{\beta} w_{\alpha}+v_{\hat{f}} \delta_{\beta}^{\hat{\alpha}} \delta_{\hat{\beta}}^{\alpha} w_{f}-v_{\hat{f}} \delta_{\beta}^{\hat{\beta}} \delta_{\hat{\alpha}}^{\alpha} w_{f}\nonumber\\
	&\;\;\;\;\;\;\;\;\;\;\;\;\;\;\;\;\;\;\;\;\;\;\;\;\;\;\;\;-v_{\hat{\alpha}} \delta_{f}^{\hat{\beta}} \delta_{\hat{f}}^{\alpha} w_{\beta}+v_{\hat{\alpha}} \delta_{f}^{\hat{\beta}} \delta_{\hat{f}}^{\beta} w_{\alpha}+v_{\hat{\alpha}} \delta_{f}^{\hat{f}} \delta_{\hat{\beta}}^{\alpha} w_{\beta}-v_{\hat{\alpha}} \delta_{f}^{\hat{f}} \delta_{\hat{\beta}}^{\beta} w_{\alpha}-v_{\hat{\alpha}} \delta_{\beta}^{\hat{f}} \delta_{\hat{\beta}}^{\alpha} w_{f}+v_{\hat{\alpha}} \delta_{\beta}^{\hat{\beta}} \delta_{\hat{f}}^{\alpha} w_{f}\nonumber\\
	&\;\;\;\;\;\;\;\;\;\;\;\;\;\;\;\;\;\;\;\;\;\;\;\;\;\;\;\;+v_{\hat{\beta
	}} \delta_{f}^{\hat{\alpha}} \delta_{\hat{f}}^{\alpha} w_{\beta}-v_{\hat{\beta}} \delta_{f}^{\hat{\alpha}} \delta_{\hat{f}}^{\beta} w_{\alpha}-v_{\hat{\beta}} \delta_{f}^{\hat{f}} \delta_{\hat{\alpha}}^{\alpha} w_{\beta}+v_{\hat{\beta}} \delta_{f}^{\hat{f}} \delta_{\hat{\alpha}}^{\beta} w_{\alpha}+v_{\hat{\beta}} \delta_{\beta}^{\hat{f}} \delta_{\hat{\alpha}}^{\alpha} w_{f}-v_{\hat{\beta}} \delta_{\beta}^{\hat{\alpha}} \delta_{\hat{f}}^{\alpha} w_{f}\nonumber\\
	&\;\;\;\;\;\;\;\;\;\;\;- \delta_{f}^{\hat{\beta}} \delta_{\hat{\alpha}}^{\alpha}\delta_{\hat{f}}^{\beta}g(v,w) +\delta_{f}^{\hat{\beta}} \delta_{\hat{f}}^{\alpha}\delta_{\hat{\alpha}}^{\beta} g(v,w)+ \delta_{f}^{\hat{\alpha}} \delta_{\hat{\beta}}^{\alpha}\delta_{\hat{f}}^{\beta} g(v,w)-\delta_{f}^{\hat{\alpha}}\delta_{\hat{f}}^{\alpha} \delta_{\hat{\beta}}^{\beta} g(v,w)-\delta_{f}^{\hat{f}}\delta_{\hat{\beta}}^{\alpha} \delta_{\hat{\alpha}}^{\beta} g(v,w)+\delta_{f}^{\hat{f}} \delta_{\hat{\alpha}}^{\alpha}\delta_{\hat{\beta}}^{\beta} g(v,w)\bigg]{\rm tr}[id],\nonumber
\end{align}
then
\begin{align}
&\int_{\|\xi\|=1}\operatorname{tr} \biggl\{\frac{9}{4}\|\xi\|^{-2m}\sum_{f<\alpha<\beta,\hat{f}<\hat{\alpha}<\hat{\beta}=1}^{2m}  T(e_{f},e_{\alpha},e_{\beta})T(e_{\hat{f}},e_{\hat{\alpha}},e_{\hat{\beta}}) c(v)c(e_{f})c(e_{\alpha})c(e_{\beta})c(w)c(e_{\hat{f}})c(e_{\hat{\alpha}})c(e_{\hat{\beta}})\biggr\}(x_0)\sigma(\xi)\\
&=\int_{\|\xi\|=1}\operatorname{tr} \biggl\{ \frac{9}{4\times36}\|\xi\|^{-2m}\sum_{f\neq\alpha\neq\beta,\hat{f}\neq\hat{\alpha}\neq\hat{\beta}=1}^{2m}  T(e_{f},e_{\alpha},e_{\beta})T(e_{\hat{f}},e_{\hat{\alpha}},e_{\hat{\beta}})c(v)c(e_{f})c(e_{\alpha})c(e_{\beta})c(w)c(e_{\hat{f}})c(e_{\hat{\alpha}})c(e_{\hat{\beta}})\biggr\}(x_0)\sigma(\xi)\nonumber\\
&=\sum_{f\neq\alpha\neq\beta=1}^{2m} \bigg[\frac{3}{8}T^2(e_{f},e_{\alpha},e_{\beta})g(v,w)-\frac{9}{4}T(v,e_{\alpha},e_{\beta})T(w,e_{\alpha},e_{\beta})\bigg]{\rm tr}[id]{\rm Vol}(S^{n-1}).\nonumber
\end{align}

\noindent{\bf (II-1-$\mathbb{B}$)}(See Appendix)

Similar to {\bf (I-$\mathbb{B}$)}, it can be obtained that
\begin{align}\label{t2}
	&{\rm tr}\bigg(\sum_{j\neq p,t\neq s=1}^{2m}c(v)c(e_{j})c(w)c(e_p)c(e_s)c(e_t)\bigg)\\
	&=\sum_{j\neq p,t\neq s=1}^{2m}\bigg[-v_t w_p \delta_{j}^{s}-v_t w_j \delta_{p}^{s}+v_s w_p \delta_{j}^{t}+v_s w_j \delta_{p}^{t}-v_p w_s \delta_{j}^{t}+v_p w_t \delta_{j}^{s}\nonumber\\
	&\;\;\;\;\;\;\;\;\;\;\;\;\;\;\;\;\;-v_j w_t \delta_{p}^{s}+v_j w_s \delta_{p}^{t}+\delta_{j}^{t}\delta_{p}^{s}g(v,w)-\delta_{j}^{s}\delta_{p}^{t}g(v,w)\bigg]{\rm tr}[id],\nonumber
\end{align}
then
\begin{align}\label{t2s}
	&\int_{\|\xi\|=1}\operatorname{tr}\biggl\{ \frac{1}{8}\|\xi\|^{-2m}\sum_{j,p,t,s=1}^{2m}  {\operatorname{R}}_{jpts}c(v)c(e_j)c(w)c(e_p)c(e_s)c(e_t) \biggr\}(x_0)\sigma(\xi)\\
	&=\bigg(\frac{1}{4} s g(v,w)-\frac{1}{2}{\rm Ric}(v,w)\bigg){\rm tr}[id]{\rm Vol}(S^{n-1}).\nonumber
\end{align}

\noindent{\bf (II-1-$\mathbb{C}$)}(See Appendix)

Similarly, we have
\begin{align}\label{t3}
	&{\rm tr}\bigg(\sum_{j,f\neq{\alpha}\neq{\beta}=1}^{2m}c(v)c(e_{j})c(w)c(e_f)c(e_{\alpha})c(e_{\beta})\bigg)\\
	&=\sum_{j,f\neq{\alpha}\neq{\beta}=1}^{2m}\bigg[-v_{\beta} w_f \delta_{j}^{\alpha}+v_{\beta} w_{\alpha} \delta_{j}^{f}+v_{\alpha} w_f \delta_{j}^{\beta}-v_{\alpha} w_{\beta} \delta_{j}^{f}-v_{f} w_{\alpha} \delta_{j}^{\beta}+v_f w_{\beta} \delta_{j}^{\alpha}\bigg]{\rm tr}[id],\nonumber
\end{align}
and
\begin{align}\label{pt2}
	\frac{\partial{T}(e_{f},e_{\alpha},e_{\beta})} {\partial x_j}(x_0)=\nabla_{e_j}(T)(e_{f},e_{\alpha},e_{\beta})(x_0),
\end{align}
then
\begin{align}
	&\int_{\|\xi\|=1} \operatorname{tr}\biggl\{\frac{3}{2}\|\xi\|^{-2m}\sum_{f<\alpha<\beta,j=1}^{2m}  \frac{\partial{T}(e_{f},e_{\alpha},e_{\beta})} {\partial x_j} c(v)c(dx_j)c(e_w)c(e_{f})c(e_{\alpha})c(e_{\beta})\biggr\}(x_0) \sigma(\xi)\\
	&=\int_{\|\xi\|=1} \biggl\{\frac{1}{4}\|\xi\|^{-2m}\sum_{f\neq\alpha\neq\beta,j=1}^{2m}  \frac{\partial{T}(e_{f},e_{\alpha},e_{\beta})} {\partial x_j} \operatorname{tr}[c(v)c(dx_j)c(e_w)c(e_{f})c(e_{\alpha})c(e_{\beta})] \biggr\}(x_0)\sigma(\xi)\nonumber\\
	&=-\frac{3}{2}\sum_{j=1}^{2m} \nabla_{e_j}(T)(v,w,e_j){\rm tr}[id]{\rm Vol}(S^{n-1}).\nonumber
\end{align}

\noindent{\bf (II-1-$\mathbb{D}$)}

Similarly, we have
\begin{align}\label{t4}
	&{\rm tr}\bigg(\sum_{j,\gamma,f\neq{\alpha}\neq{\beta}=1}^{2m}c(v)c(e_{j})c(e_{\gamma})c(e_f)c(e_{\alpha})c(e_{\beta})\bigg)\\
	&=\sum_{j,\gamma,f\neq{\alpha}\neq{\beta}=1}^{2m}\bigg[-v_{\beta} \delta_{\gamma}^{f} \delta_{j}^{\alpha}+v_{\beta}\delta_{\gamma}^{\alpha} \delta_{j}^{f}+v_{\alpha} \delta_{\gamma}^{f} \delta_{j}^{\beta}-v_{\alpha} \delta_{\gamma}^{\beta} \delta_{j}^{f}-v_{f} \delta_{\gamma}^{\alpha} \delta_{j}^{\beta}+v_f \delta_{\gamma}^{\beta} \delta_{j}^{\alpha}\bigg]{\rm tr}[id],\nonumber
\end{align}
then
\begin{align}
	&\int_{\|\xi\|=1} \biggl\{\frac{3}{2}\|\xi\|^{-2m}\sum_{f<\alpha<\beta,j,\gamma=1}^{2m} {T}(e_{f},e_{\alpha},e_{\beta})\partial x_j(w_\gamma) c(v)c(dx_j)c(e_\gamma)c(e_{f})c(e_{\alpha})c(e_{\beta})\biggr\}(x_0) \sigma(\xi)\\
	&=\int_{\|\xi\|=1} \operatorname{tr}\biggl\{\frac{1}{4}\|\xi\|^{-2m}\sum_{f\neq\alpha\neq\beta,j,\gamma=1}^{2m} {T}(e_{f},e_{\alpha},e_{\beta})\partial x_j(w_\gamma) \operatorname{tr}[c(v)c(dx_j)c(e_\gamma)c(e_{f})c(e_{\alpha})c(e_{\beta})]\biggr\}(x_0) \sigma(\xi)\nonumber\\
	&=-\frac{3}{2}\sum_{j=1}^{2m} T(v,\nabla_{e_j}^{L}w,e_j){\rm tr}[id]{\rm Vol}(S^{n-1}).\nonumber
\end{align}

Summing sequentially from {\bf (II-1-$\mathbb{A}$)} to {\bf (II-1-$\mathbb{D}$)} we get
\begin{align}
	&\int_{\|\xi\|=1} \operatorname{tr}\bigg[\sigma_{0}(\mathcal{AB}) \sigma_{-2 m}\left(D^{-2 m}\right)\left(x_{0}\right)\bigg] \sigma(\xi)\\
	&=\bigg[\frac{1}{4} s g(v,w)-\frac{1}{2}{\rm Ric}(v,w)\bigg]{\rm tr}[id]{\rm Vol}(S^{n-1})\nonumber\\
	&\;\;\;\;+\sum_{f\neq\alpha\neq\beta=1}^{2m} \bigg[\frac{3}{8}T^2(e_{f},e_{\alpha},e_{\beta})g(v,w)-\frac{9}{4}T(v,e_{\alpha},e_{\beta})T(w,e_{\alpha},e_{\beta})\bigg]{\rm tr}[id]{\rm Vol}(S^{n-1})\nonumber\\
	&\;\;\;\;-\frac{3}{2}\sum_{j=1}^{2m} \bigg[\nabla_{e_j}(T)(v,w,e_j)+ T(v,\nabla_{e_j}^{L}w,e_j)\bigg]{\rm tr}[id]{\rm Vol}(S^{n-1}).\nonumber
\end{align}

\noindent{\bf (II-2)} For $\sigma_{1}(\mathcal{AB}) \sigma_{-2 m-1}\left(D_{T}^{-2 m}\right)$:

According to \eqref{lem22} in lemma \ref{lem2} and \eqref{lemAB2} in lemma \ref{AB} , where $w_{s,t}(e_p)(x_0)=0$, we get
\begin{align}\label{1-2m-1}
	&\sigma_{1}(\mathcal{AB}) \sigma_{-2 m-1}\left(D_{T}^{-2 m}\right)\left(x_{0}\right)\\
	&=-\frac{9m}{2}\|\xi\|^{-2m-2}\sum_{f<\alpha<\beta,k<l=1}^{2m}  T(e_{f},e_{\alpha},e_{\beta})T(e_{a},e_{k},e_{l})\nonumber\\
	&\;\;\;\; \times\bigg[c(v)c(\xi)c(w)c(e_f)c(e_{\alpha})c(e_{\beta})c(e_k)c(e_l)+c(v)c(e_f)c(e_{\alpha})c(e_{\beta})c(w)c(\xi)c(e_k)c(e_l)\bigg]\xi_{a}\nonumber\\
	&\;\;\;\;-3m\|\xi\|^{-2m-2}\sum_{j,\gamma,a,k<l=1}^{2m} \partial x_j(w_\gamma){T}(e_a,e_k,e_l)c(v)c(dx_j)c(e_\gamma)c(\xi)c(e_{k})c(e_{l})\xi_{a}.\nonumber
\end{align}

\noindent{\bf (II-2-$\mathbb{A}$)}(See Appendix)

Based on the relation of the Clifford action and ${\rm tr}\mathcal{XY}={\rm tr}\mathcal{YX}$, we can obtain the equality
\begin{align}\label{t5}
	&{\rm tr}\bigg(\sum_{f\neq{\alpha}\neq{\beta},k\neq l \neq \eta=1}^{2m}c(v)c(e_{\eta})c(w)c(e_f)c(e_{\alpha})c(e_{\beta})c(e_k)c(e_l)+c(v)c(e_f)c(e_{\alpha})c(e_{\beta})c(w)c(e_{\eta})c(e_k)c(e_l)\bigg)\\
	&=\sum_{f\neq{\alpha}\neq{\beta},k\neq l \neq \eta=1}^{2m}\biggl\{v_{\beta}\delta_{f}^{l} \delta_{k}^{\alpha} w_{\eta}-v_{\beta}\delta_{f}^{l} \delta_{\eta}^{\alpha} w_{k}-v_{\beta}\delta_{f}^{k} \delta_{l}^{\alpha} w_{\eta}+v_{\beta}\delta_{f}^{k} \delta_{\eta}^{\alpha} w_{l}+v_{\beta}\delta_{f}^{\eta} \delta_{l}^{\alpha} w_{k}-v_{\beta}\delta_{f}^{\eta} \delta_{k}^{\alpha} w_{l}\nonumber\\
	&\;\;\;\;\;\;\;\;\;\;\;\;\;\;\;\;\;\;\;\;\;\;\;-v_{\alpha}\delta_{f}^{l} \delta_{k}^{\beta} w_{\eta}+v_{\alpha}\delta_{f}^{l} \delta_{\eta}^{\beta} w_{k}+v_{\alpha}\delta_{f}^{k} \delta_{l}^{\beta} w_{\eta}-v_{\alpha}\delta_{f}^{k} \delta_{\eta}^{\beta} w_{l}-v_{\alpha}\delta_{f}^{\eta} \delta_{l}^{\beta} w_{k}+v_{\alpha}\delta_{f}^{\eta} \delta_{k}^{\beta} w_{l}\nonumber\\
	&\;\;\;\;\;\;\;\;\;\;\;\;\;\;\;\;\;\;\;\;\;\;\;+v_{f}\delta_{\alpha}^{l} \delta_{k}^{\beta} w_{\eta}-v_{f}\delta_{\alpha}^{l} \delta_{\eta}^{\beta} w_{k}-v_{f}\delta_{\alpha}^{k} \delta_{l}^{\beta} w_{\eta}+v_{f}\delta_{\alpha}^{k} \delta_{\eta}^{\beta} w_{l}+v_{f}\delta_{\alpha}^{\eta} \delta_{l}^{\beta} w_{k}-v_{f}\delta_{\alpha}^{\eta} \delta_{k}^{\beta} w_{l}\nonumber\\
	&\;\;\;\;\;\;\;\;\;\;\;\;\;\;\;\;\;\;\;\;\;\;\;+v_{\eta}\delta_{\alpha}^{l} \delta_{k}^{\beta} w_{f}-v_{\eta}\delta_{\alpha}^{l} \delta_{f}^{k} w_{\beta}-v_{\eta}\delta_{\alpha}^{k} \delta_{l}^{\beta} w_{f}+v_{\eta}\delta_{\alpha}^{k} \delta_{f}^{l} w_{\beta}+v_{\eta} \delta_{l}^{\beta} \delta_{f}^{k}w_{\alpha}-v_{\eta} \delta_{k}^{\beta} \delta_{f}^{l}w_{\alpha}\nonumber\\
	&\;\;\;\;\;\;\;\;\;\;\;\;\;\;\;\;\;\;\;\;\;\;\;\bigg[-\delta_{\alpha}^{l} \delta_{k}^{\beta} \delta_{f}^{\eta}+\delta_{\alpha}^{l} \delta_{\eta}^{\beta} \delta_{f}^{k}+\delta_{\alpha}^{k} \delta_{l}^{\beta} \delta_{f}^{\eta}-\delta_{\alpha}^{k} \delta_{\eta}^{\beta} \delta_{f}^{l}-\delta_{\alpha}^{\eta} \delta_{l}^{\beta} \delta_{f}^{k}+\delta_{\alpha}^{\eta} \delta_{k}^{\beta} \delta_{f}^{l}\bigg]g(v,w)\biggr\}{\rm tr}[id],\nonumber
\end{align}
then
\begin{align}
	&\int_{\|\xi\|=1}\operatorname{tr} \biggl\{-\frac{9m}{2}\|\xi\|^{-2m-2}\sum_{a,f<\alpha<\beta,k<l=1}^{2m}  T(e_{f},e_{\alpha},e_{\beta})T(e_{a},e_{k},e_{l})\\
	&\;\;\;\; \times c(v)c(\xi)c(w)c(e_f)c(e_{\alpha})c(e_{\beta})c(e_k)c(e_l)+c(v)c(e_f)c(e_{\alpha})c(e_{\beta})c(w)c(\xi)c(e_k)c(e_l)\xi_{a} \biggr\}(x_0)\sigma(\xi)\nonumber\\
	&=\int_{\|\xi\|=1}\operatorname{tr} \biggl\{-\frac{3m}{8}\|\xi\|^{-2m-2}\sum_{a,f\neq\alpha\neq\beta,k\neq l,\eta=1}^{2m}  T(e_{f},e_{\alpha},e_{\beta})T(e_{a},e_{k},e_{l})\nonumber\\
	&\;\;\;\; \times c(v)c(e_\eta)c(w)c(e_f)c(e_{\alpha})c(e_{\beta})c(e_k)c(e_l)+c(v)c(e_f)c(e_{\alpha})c(e_{\beta})c(w)c(e_\eta)c(e_k)c(e_l)\xi_{a}\xi_{\eta} \biggr\}(x_0)\sigma(\xi)\nonumber\\
	&=\sum_{f\neq\alpha\neq\beta=1}^{2m} \bigg[-\frac{9}{4}T^2(e_{f},e_{\alpha},e_{\beta})g(v,w)+9T(v,e_{\alpha},e_{\beta})T(w,e_{\alpha},e_{\beta})\bigg]{\rm tr}[id]{\rm Vol}(S^{n-1}).\nonumber
\end{align}

\noindent{\bf (II-2-$\mathbb{B}$)}

Similarly to {\bf (I-$\mathbb{B}$)}, it follows that
\begin{align}\label{t6}
	&{\rm tr}\bigg(\sum_{j,\gamma,\eta \neq  k\neq l=1}^{2m}c(v)c(dx_j)c(e_\gamma)c(e_\eta)c(e_{k})c(e_{l})\bigg)\\
	&=\sum_{j,\gamma,\eta \neq k\neq l=1}^{2m}\bigg[-v_l \delta_{j}^{k}\delta_{\gamma}^{\eta}+v_l \delta_{j}^{\eta}\delta_{\gamma}^{k}+v_k \delta_{j}^{l}\delta_{\gamma}^{\eta}-v_k \delta_{j}^{\eta}\delta_{\gamma}^{l}-v_{\eta} \delta_{j}^{l}\delta_{\gamma}^{k}+v_{\eta} \delta_{j}^{k}\delta_{\gamma}^{l} \bigg]{\rm tr}[id],\nonumber
\end{align}
then
\begin{align}
	&\int_{\|\xi\|=1}\operatorname{tr} \biggl\{-3m\|\xi\|^{-2m-2}\sum_{j,\gamma,a,k<l=1}^{2m} \partial x_j(w_\gamma){T}(e_a,e_k,e_l)\c(v)c(dx_j)c(e_\gamma)c(\xi)c(e_{k})c(e_{l})\xi_{a}\biggr\}(x_0)\sigma(\xi)\\
	&=\int_{\|\xi\|=1}\operatorname{tr} \biggl\{\frac{-3m}{2}\|\xi\|^{-2m-2}\sum_{\eta,j,\gamma,a,k\neq l=1}^{2m} \partial x_j(w_\gamma){T}(e_a,e_k,e_l)c(v)c(dx_j)c(e_\gamma)c(e_\eta)c(e_{k})c(e_{l})\xi_{a}\xi_{\eta}\biggr\}(x_0)\sigma(\xi)\nonumber\\
	&=\;\;\frac{9}{2}\sum_{j=1}^{2m} T(v,\nabla_{e_j}^{L}w,e_j){\rm tr}[id]{\rm Vol}(S^{n-1}).\nonumber
\end{align}

By summing {\bf (II-2-$\mathbb{A}$)} and {\bf (II-2-$\mathbb{B}$)}, we get
\begin{align}
	&\int_{\|\xi\|=1} \operatorname{tr}\bigg[\sigma_{1}(\mathcal{AB}) \sigma_{-2 m-1}\left(D_T^{-2 m}\right)\left(x_{0}\right)\bigg] \sigma(\xi)\\
	&=\sum_{f\neq\alpha\neq\beta=1}^{2m} \bigg[-\frac{9}{4}T^2(e_{f},e_{\alpha},e_{\beta})g(v,w)+9T(v,e_{\alpha},e_{\beta})T(w,e_{\alpha},e_{\beta})\bigg]{\rm tr}[id]{\rm Vol}(S^{n-1})\nonumber\\
	&\;\;\;\;+\frac{9}{2}\sum_{j=1}^{2m} T(v,\nabla_{e_j}^{L}w,e_j){\rm tr}[id]{\rm Vol}(S^{n-1}).\nonumber
\end{align}

\noindent{\bf (II-3)} For $\sigma_{2}(\mathcal{AB}) \sigma_{-2 m-2}\left(D_T^{-2 m}\right)$:

According to \eqref{lem23} in lemma \ref{lem2} and \eqref{lemAB3} in lemma \ref{AB}, we get
\begin{align}\label{2-2m-2}
	&\sigma_{2}(\mathcal{AB}) \sigma_{-2 m-2}\left(D_T^{-2 m}\right)\left(x_{0}\right)\\
	&=-\frac{m(m+1)}{3}\|\xi\|^{-2 m-4}\sum_{a,b=1}^{2m} \operatorname{Ric}_{a b} \xi_{a} \xi_{b}c(v)c(\xi)c(w)c(\xi) \nonumber\\
	&\;\;\;\;+\frac{9m(m+1)}{2}\|\xi\|^{-2 m-4}\sum_{a,b,j<l,\hat{j}<\hat{l}=1}^{2m}T(e_a,e_j,e_l)T(e_b, e_{\hat{j}},e_{\hat{l}})c(v)c(\xi)c(w)c(\xi)c(e_j)c(e_l)c(e_{\hat{j}})c(e_{\hat{l}})\xi_{a}\xi_b \nonumber\\
	&\;\;\;\;-\frac{9m}{4}\|\xi\|^{-2 m-2}\sum_{a,j<l,\hat{j}<\hat{l}=1}^{2m}T(e_a,e_j,e_l)T(e_a, e_{\hat{j}},e_{\hat{l}})c(v)c(\xi)c(w)c(\xi)c(e_j)c(e_l)c(e_{\hat{j}})c(e_{\hat{l}}) \nonumber\\
	&\;\;\;\;-\frac{3m}{2}\|\xi\|^{-2 m-2}\sum_{j<l=1}^{2m}\frac{\partial{T}(\partial_{a},e_j,e_l)}{\partial{x_a}}c(v)c(\xi)c(w)c(\xi)c(e_j)c(e_l)\nonumber\\
	&\;\;\;\;+\frac{ m (m+1)}{4}\|\xi\|^{-2 m-4} \sum_{a,b,t,s=1}^{2m}\operatorname{R}_{b a t s} c(v)c(\xi)c(w)c(\xi)c(e_s) c(e_t) \xi_{a} \xi_{b}\nonumber\\
	&\;\;\;\;+3m(m+1)\|\xi\|^{-2 m-4}\sum_{a,b,j<l=1}^{2m}\frac{\partial{T}(\partial_{a},e_j,e_l)}{\partial{x_b}}c(v)c(\xi)c(w)c(\xi)c(e_j)c(e_l)\xi_{a}\xi_b\nonumber\\
	&\;\;\;\;+m\|\xi\|^{-2 m-2}(\frac{1}{4} \operatorname{R}^{g}-\frac{3}{4}\|T\|^2)c(v)c(\xi)c(w)c(\xi)\nonumber\\
	&\;\;\;\;+\frac{3m}{4}\|\xi\|^{-2 m-2}\sum_{i<j<k<t=1}^{2m}dT(e_i,e_j,e_k,e_t)c(v)c(\xi)c(w)c(\xi)c(e_i)c(e_j)c(e_k)c(e_t).\nonumber
\end{align}

\noindent{\bf (II-3-$\mathbb{A}$)}

Based on the relation of the Clifford action and ${\rm tr}\mathcal{XY}={\rm tr}\mathcal{YX}$, we get
\begin{align}\label{t7}
	{\rm tr}\bigg(\sum_{f,g=1}^{2m} c(v)c(e_f)c(w)c(e_g)\bigg)=\sum_{f,g=1}^{2m}\bigg[v_g w_f -\delta_{f}^{g}g(v,w)+v_g w_f \bigg]{\rm tr}[id],
\end{align}
then
\begin{align}
	&\int_{\|\xi\|=1} \operatorname{tr}\biggl\{-\frac{m(m+1)}{3}\|\xi\|^{-2 m-4}\sum_{a,b=1}^{2m} \operatorname{Ric}_{a b} \xi_{a} \xi_{b} c(v)c(\xi)c(w)c(\xi)\biggr\}(x_0)\sigma(\xi)\nonumber\\
	&=\int_{\|\xi\|=1} \operatorname{tr}\biggl\{-\frac{m(m+1)}{3}\|\xi\|^{-2 m-4}\sum_{a,b,f,g=1}^{2m} \operatorname{Ric}_{a b} \xi_{a} \xi_{b} \xi_{f} \xi_{g}c(v)c(e_f)c(w)c(e_g)\biggr\}(x_0)\sigma(\xi)\nonumber\\
	&=\;\;\bigg(\frac{m}{6} s g(v,w)-\frac{1}{3}{\rm Ric}(v,w)\bigg){\rm tr}[id]{\rm Vol}(S^{n-1}).\nonumber
\end{align}

\noindent{\bf (II-3-$\mathbb{B}$)}(See Appendix)

Based on the relation of the Clifford action and ${\rm tr}\mathcal{XY}={\rm tr}\mathcal{YX}$, we get
\begin{align}\label{t70}
	&{\rm tr}\bigg(\sum_{f,j\neq l,\hat{j}\neq\hat{l}=1}^{2m}c(v)c(e_f)c(w)c(e_f)c(e_j)c(e_l)c(e_{\hat{j}})c(e_{\hat{l}})\bigg)\\
	&=2\sum_{f,j\neq l,\hat{j}\neq\hat{l}=1}^{2m}\biggl\{m\bigg[v_{\hat{l}}w_{l}\delta^{\hat{j}}_{j}-v_{\hat{l}}w_{j}\delta^{\hat{j}}_{l}-v_{\hat{j}}w_{l}\delta^{\hat{l}}_{j}+v_{\hat{j}}w_{j}\delta^{\hat{l}}_{j}-v_{l}w_{\hat{l}}\delta^{\hat{j}}_{j}+v_{l}w_{\hat{j}}\delta^{\hat{l}}_{j}+v_{j}w_{\hat{l}}\delta^{\hat{j}}_{l}-v_{j}w_{\hat{j}}\delta^{\hat{l}}_{l}\nonumber\\
	&\;\;\;\;\;\;\;\;\;\;\;\;\;\;\;\;\;\;-\delta^{\hat{l}}_{j}\delta^{\hat{j}}_{l}g(v,w)+\delta^{\hat{j}}_{j}\delta^{\hat{l}}_{l}g(v,w)\bigg]-w_{f}v_{\hat{l}}\delta^{f}_{l}\delta^{\hat{j}}_{j}+w_{f}v_{\hat{l}}\delta^{f}_{j}\delta^{\hat{j}}_{l}+w_{f}v_{\hat{j}}\delta^{f}_{l}\delta^{\hat{l}}_{j}-w_{f}v_{\hat{j}}\delta^{f}_{j}\delta^{\hat{l}}_{j}\nonumber\\
	&\;\;\;\;\;\;\;\;\;\;\;\;\;\;\;\;\;\;+w_{f}v_{l}\delta^{f}_{\hat{l}}\delta^{\hat{j}}_{j}-w_{f}v_{l}\delta^{f}_{\hat{j}}\delta^{\hat{l}}_{j}-w_{f}v_{j}\delta^{f}_{\hat{l}}\delta^{\hat{j}}_{l}+w_{f}v_{j}\delta^{f}_{\hat{j}}\delta^{\hat{l}}_{l}+w_{f}v_{f}\delta^{\hat{l}}_{j}\delta^{\hat{j}}_{l}-w_{f}v_{f}\delta^{\hat{j}}_{j}\delta^{\hat{l}}_{l}\biggr\}{\rm tr}[id],\nonumber
\end{align}
and
\begin{align}\label{t8}
	&{\rm tr}\sum_{a,b,j\neq l,\hat{j}\neq\hat{l}=1}^{2m}\bigg(c(v)c(e_a)c(w)c(e_b)c(e_j)c(e_l)c(e_{\hat{j}})c(e_{\hat{l}})+c(v)c(e_b)c(w)c(e_a)c(e_j)c(e_l)c(e_{\hat{j}})c(e_{\hat{l}})\bigg)\\
    &=2\sum_{a,b,j\neq l,\hat{j}\neq\hat{l}=1}^{2m}\bigg[-w_{a}v_{\hat{l}}\delta^{b}_{l}\delta^{\hat{j}}_{j}+w_{a}v_{\hat{l}}\delta^{b}_{j}\delta^{\hat{j}}_{l}+w_{a}v_{\hat{j}}\delta^{b}_{l}\delta^{\hat{l}}_{j}-w_{a}v_{\hat{j}}\delta^{b}_{j}\delta^{\hat{l}}_{j}+w_{a}v_{l}\delta^{b}_{\hat{l}}\delta^{\hat{j}}_{j}-w_{a}v_{l}\delta^{b}_{\hat{j}}\delta^{\hat{l}}_{j}-w_{a}v_{j}\delta^{b}_{\hat{l}}\delta^{\hat{j}}_{l}\nonumber\\
    &\;\;\;\;\;\;\;\;\;\;\;\;\;\;\;\;\;\;\;+w_{a}v_{j}\delta^{b}_{\hat{j}}\delta^{\hat{l}}_{l}+w_{a}v_{b}\delta^{\hat{l}}_{j}\delta^{\hat{j}}_{l}-w_{a}v_{b}\delta^{\hat{j}}_{j}\delta^{\hat{l}}_{l}+v_{\hat{l}}w_{l}\delta^{\hat{j}}_{j}\delta^{a}_{b}-v_{\hat{l}}w_{j}\delta^{\hat{j}}_{l}\delta^{a}_{b}-v_{\hat{j}}w_{l}\delta^{\hat{l}}_{j}\delta^{a}_{b}+v_{\hat{j}}w_{j}\delta^{\hat{l}}_{j}\delta^{a}_{b}\nonumber\\
    &\;\;\;\;\;\;\;\;\;\;\;\;\;\;\;\;\;\;\;-v_{l}w_{\hat{l}}\delta^{\hat{j}}_{j}\delta^{a}_{b}+v_{l}w_{\hat{j}}\delta^{\hat{l}}_{j}\delta^{a}_{b}+v_{j}w_{\hat{l}}\delta^{\hat{j}}_{l}-v_{j}w_{\hat{j}}\delta^{\hat{l}}_{l}\delta^{a}_{b}-\delta^{\hat{l}}_{j}\delta^{\hat{j}}_{l}\delta^{a}_{b}g(v,w)+\delta^{\hat{j}}_{j}\delta^{\hat{l}}_{l}\delta^{a}_{b}g(v,w)\nonumber\\
    &\;\;\;\;\;\;\;\;\;\;\;\;\;\;\;\;\;\;\;-w_{b}v_{\hat{l}}\delta^{a}_{l}\delta^{\hat{j}}_{j}+w_{b}v_{\hat{l}}\delta^{a}_{j}\delta^{\hat{j}}_{l}+w_{b}v_{\hat{j}}\delta^{a}_{l}\delta^{\hat{l}}_{j}-w_{b}v_{\hat{j}}\delta^{a}_{j}\delta^{\hat{l}}_{j}+w_{b}v_{l}\delta^{a}_{\hat{l}}\delta^{\hat{j}}_{j}-w_{b}v_{l}\delta^{a}_{\hat{j}}\delta^{\hat{l}}_{j}-w_{b}v_{j}\delta^{a}_{\hat{l}}\delta^{\hat{j}}_{l}\nonumber\\
    &\;\;\;\;\;\;\;\;\;\;\;\;\;\;\;\;\;\;+w_{b}v_{j}\delta^{a}_{\hat{j}}\delta^{\hat{l}}_{l}+w_{b}v_{a}\delta^{\hat{l}}_{j}\delta^{\hat{j}}_{l}-w_{b}v_{a}\delta^{\hat{j}}_{j}\delta^{\hat{l}}_{l}\bigg]{\rm tr}[id].\nonumber
\end{align}
By \eqref{t70}, \eqref{t8} and $\int_{\|\xi\|=1}\xi_{a}\xi_b\xi_{f}\xi_g\sigma(\xi)=\frac{1}{n(n+2)}\big(\delta_{a}^{b}\delta_{f}^{g}+\delta_{a}^{f}\delta_{b}^{g}+\delta_{a}^{g}\delta_{b}^{f}\big){\rm Vol}(S^{n-1})$, we get
\begin{align}
	&\int_{\|\xi\|=1}\operatorname{tr} \biggl\{\frac{9m(m+1)}{2}\|\xi\|^{-2 m-4}\sum_{a,b,j<l,\hat{j}<\hat{l}=1}^{2m}T(e_a,e_j,e_l)T(e_b, e_{\hat{j}},e_{\hat{l}})c(v)c(\xi)c(w)c(\xi)c(e_j)c(e_l)c(e_{\hat{j}})c(e_{\hat{l}})\xi_{a}\xi_b\biggr\}(x_0)\sigma(\xi)\\
	&=\int_{\|\xi\|=1}\operatorname{tr} \biggl\{\frac{9m(m+1)}{8}\sum_{a,b,f,g,j\neq l,\hat{j}\neq\hat{l}=1}^{2m}T(e_a,e_j,e_l)T(e_b, e_{\hat{j}},e_{\hat{l}})c(v)c(e_f)c(w)c(e_g)c(e_j)c(e_l)c(e_{\hat{j}})c(e_{\hat{l}})\xi_{a}\xi_b\xi_{f}\xi_g\biggr\}(x_0)\sigma(\xi)\nonumber\\
	&=\;\;\frac{9}{32}\sum_{a,f,j\neq l,\hat{j}\neq\hat{l}=1}^{2m}T(e_a,e_j,e_l)T(e_a, e_{\hat{j}},e_{\hat{l}}){\rm tr}[c(v)c(e_f)c(w)c(e_f)c(e_j)c(e_l)c(e_{\hat{j}})c(e_{\hat{l}})]{\rm Vol}(S^{n-1})\nonumber\\
	&\;\;\;\;+\frac{9}{32}\sum_{a,b,j\neq l,\hat{j}\neq\hat{l}=1}^{2m}T(e_a,e_j,e_l)T(e_b, e_{\hat{j}},e_{\hat{l}}){\rm tr}[c(v)c(e_a)c(w)c(e_b)c(e_j)c(e_l)c(e_{\hat{j}})c(e_{\hat{l}})]{\rm Vol}(S^{n-1})\nonumber\\
	&\;\;\;\;+\frac{9}{32}\sum_{a,b,j\neq l,\hat{j}\neq\hat{l}=1}^{2m}T(e_a,e_j,e_l)T(e_b, e_{\hat{j}},e_{\hat{l}}){\rm tr}[c(v)c(e_b)c(w)c(e_a)c(e_j)c(e_l)c(e_{\hat{j}})c(e_{\hat{l}})]{\rm Vol}(S^{n-1})\nonumber\\
	&=\sum_{a,j\neq l=1}^{2m} \bigg[\frac{9}{8}(2-m)T^2(e_{a},e_{j},e_{l})g(v,w)-\frac{9}{4}T(v,e_{j},e_{l})T(w,e_{j},e_{l})\bigg]{\rm tr}[id]{\rm Vol}(S^{n-1}).\nonumber
\end{align}

\noindent{\bf (II-3-$\mathbb{C}$)}

By \eqref{t70} and $\int_{\|\xi\|=1}\xi_{f}\xi_g\sigma(\xi)=\frac{1}{n}\delta_{f}^{g}{\rm Vol}(S^{n-1})$, we get
\begin{align}
	&\int_{\|\xi\|=1}\operatorname{tr} \biggl\{-\frac{9m}{4}\|\xi\|^{-2 m-2}\sum_{a,j<l,\hat{j}<\hat{l}=1}^{2m}T(e_a,e_j,e_l)T(e_a, e_{\hat{j}},e_{\hat{l}})c(v)c(\xi)c(w)c(\xi)c(e_j)c(e_l)c(e_{\hat{j}})c(e_{\hat{l}})\biggr\}(x_0)\sigma(\xi)\\
	&=\int_{\|\xi\|=1}\operatorname{tr} \biggl\{\frac{9m}{8}\|\xi\|^{-2 m-2}\sum_{a,f,g,j\neq l,\hat{j}\neq\hat{l}=1}^{2m}T(e_a,e_j,e_l)T(e_a, e_{\hat{j}},e_{\hat{l}})c(v)c(e_f)c(w)c(e_g)c(e_j)c(e_l)c(e_{\hat{j}})c(e_{\hat{l}})\xi_{f}\xi_g\biggr\}(x_0)\sigma(\xi)\nonumber\\
	&=\frac{9}{16}\sum_{a,f,g,j\neq l,\hat{j}\neq\hat{l}=1}^{2m}T(e_a,e_j,e_l)T(e_a, e_{\hat{j}},e_{\hat{l}}){\rm tr}[c(v)c(e_f)c(w)c(e_f)c(e_j)c(e_l)c(e_{\hat{j}})c(e_{\hat{l}})]{\rm Vol}(S^{n-1})\nonumber\\
	&=\sum_{a,j\neq l=1}^{2m} \frac{9}{8}(m-1)T^2(e_{a},e_{j},e_{l})g(v,w){\rm tr}[id]{\rm Vol}(S^{n-1}).\nonumber
\end{align}

\noindent{\bf (II-3-$\mathbb{D}$)}

Similarly to {\bf (I-$\mathbb{B}$)}, we get
\begin{align}\label{t9}
	&{\rm tr}\bigg(\sum_{f,j\neq l=1}^{2m}c(v)c(e_f)c(w)c(e_f)c(e_j)c(e_l)\bigg)\\
	&=\sum_{f,j\neq l=1}^{2m}\bigg[2m v_l w_j-2m v_j w_l -2w_f v_l \delta_{f}^{j}+2w_f v_j \delta_{f}^{l}\bigg]{\rm tr}[id],\nonumber
\end{align}

then
\begin{align}
	&\int_{\|\xi\|=1}\operatorname{tr} \biggl\{-\frac{3m}{2}\|\xi\|^{-2 m-2}\sum_{j<l=1}^{2m}\frac{\partial{T}(\partial_{a},e_j,e_l)}{\partial{x_a}}c(v)c(\xi)c(w)c(\xi)c(e_j)c(e_l)\biggr\}(x_0)\sigma(\xi)\\
	&=\int_{\|\xi\|=1}\operatorname{tr} \biggl\{-\frac{3m}{4}\|\xi\|^{-2 m-2}\sum_{j\neq l,f,g=1}^{2m}\frac{\partial{T}(\partial_{a},e_j,e_l)}{\partial{x_a}}c(v)c(e_f)c(w)c(e_g)c(e_j)c(e_l)\xi_{f}\xi_{g}\biggr\}(x_0)\sigma(\xi)\nonumber\\
	&=-\frac{3}{8}\|\xi\|^{-2 m-2}\sum_{j\neq l,f,g=1}^{2m}\frac{\partial{T}(\partial_{a},e_j,e_l)}{\partial{x_a}}{\rm tr}\bigg(c(v)c(e_f)c(w)c(e_f)c(e_j)c(e_l)\bigg){\rm Vol}(S^{n-1})\nonumber\\
	&=\;\;\frac{3}{2}(m-1)\sum_{a=1}^{2m}\nabla_{e_a}(T)(e_a,v,w){\rm tr}[id]{\rm Vol}(S^{n-1}).\nonumber
\end{align}

\noindent{\bf (II-3-$\mathbb{E}$)}

Due to $\int_{\|\xi\|=1}\xi_{a}\xi_b\xi_{f}\xi_g\sigma(\xi)=\frac{1}{n(n+2)}\big(\delta_{a}^{b}\delta_{f}^{g}+\delta_{a}^{f}\delta_{b}^{g}+\delta_{a}^{g}\delta_{b}^{f}\big){\rm Vol}(S^{n-1})$ and $\operatorname{R}_{a a t s}=0$, we get
\begin{align}
	&\int_{\|\xi\|=1} \bigg(\|\xi\|^{-2 m-4} \sum_{a,b,f,g,t,s=1}^{2m}\operatorname{R}_{b a t s}  \xi_{a} \xi_{b}\xi_{f} \xi_{g}\bigg)\sigma(\xi)\\
    &=\;\;\frac{1}{4m(m+1)}\sum_{a,b,f,g,t,s=1}^{2m}\operatorname{R}_{b a t s}\big(\delta_{a}^{f}\delta_{b}^{g}+\delta_{a}^{g}\delta_{b}^{f}\big){\rm Vol}(S^{n-1}),\nonumber
\end{align}
then
\begin{align}
	&\int_{\|\xi\|=1}\operatorname{tr} \biggl\{\frac{ m (m+1)}{4}\|\xi\|^{-2 m-4} \sum_{a,b,t,s=1}^{2m}\operatorname{R}_{b a t s} c(v)c(\xi)c(w)c(\xi)c(e_s) c(e_t) \xi_{a} \xi_{b}\biggr\}(x_0)\sigma(\xi)\\
	&=\int_{\|\xi\|=1}\operatorname{tr} \biggl\{\frac{ m (m+1)}{4}\|\xi\|^{-2 m-4} \sum_{a,b,f,g,t,s=1}^{2m}\operatorname{R}_{b a t s} c(v)c(e_f)c(w)c(e_g)c(e_s) c(e_t) \xi_{a} \xi_{b}\xi_{f} \xi_{g}\biggr\}(x_0)\sigma(\xi)\nonumber\\
	&=\;\;\frac{ 1}{16} \sum_{a,b,t,s=1}^{2m}\operatorname{R}_{b a t s} {\rm tr}\bigg(c(v)c(e_a)c(w)c(e_b)c(e_s) c(e_t)+c(v)c(e_a)c(w)c(e_b)c(e_s) c(e_t)\bigg){\rm Vol}(S^{n-1})\nonumber\\
	&=\;\;\frac{1}{8} \sum_{a,b,t,s=1}^{2m}\operatorname{R}_{b a t s} \bigg(-w_a v_t \delta_{b}^{s}+w_a v_s \delta_{b}^{t}+w_b v_t \delta_{a}^{s}-w_b v_s \delta_{a}^{t}\bigg){\rm Vol}(S^{n-1})\nonumber\\
	&=\;\;0.\nonumber
\end{align}

\noindent{\bf (II-3-$\mathbb{F}$)}

By \eqref{t2} and \eqref{pt2}, we obtain
\begin{align}
&\int_{\|\xi\|=1}\operatorname{tr} \biggl\{3m(m+1)\|\xi\|^{-2 m-4}\sum_{a,b,j<l=1}^{2m}\frac{\partial{T}(\partial_{a},e_j,e_l)}{\partial{x_b}}c(v)c(\xi)c(w)c(\xi)c(e_j)c(e_l)\xi_{a}\xi_b\biggr\}(x_0)\sigma(\xi)\\
&=\int_{\|\xi\|=1}\operatorname{tr} \biggl\{\frac{3m(m+1)}{2}\|\xi\|^{-2 m-4}\sum_{a,b,f,g,j\neq l=1}^{2m}\frac{\partial{T}(\partial_{a},e_j,e_l)}{\partial{x_b}}c(v)c(e_f)c(w)c(e_g)c(e_j)c(e_l)\xi_{a}\xi_b\xi_{f}\xi_g\biggr\}(x_0)\sigma(\xi)\nonumber\\
&=\;\;\frac{3}{8}\sum_{a,f,j\neq l=1}^{2m}\nabla_{e_a}(T)(e_a,e_j,e_l){\rm tr}\bigg(c(v)c(e_f)c(w)c(e_f)c(e_j)c(e_l)\bigg){\rm Vol}(S^{n-1})\nonumber\\
&\;\;\;\;+\frac{3}{8}\sum_{a,b,j\neq l=1}^{2m}\nabla_{e_b}(T)(e_a,e_j,e_l){\rm tr}\bigg(c(v)c(e_a)c(w)c(e_b)c(e_j)c(e_l)+c(v)c(e_b)c(w)c(e_a)c(e_j)c(e_l)\bigg){\rm Vol}(S^{n-1})\nonumber\\
&=-\frac{3}{2}(m+1)\nabla_{e_a}(T)(e_a,v,w){\rm tr}[id]{\rm Vol}(S^{n-1}).\nonumber
\end{align}

\noindent{\bf (II-3-$\mathbb{G}$)}

By \eqref{t7}, we obtain
\begin{align}
	&\int_{\|\xi\|=1}\operatorname{tr} \biggl\{m\|\xi\|^{-2 m-2}(\frac{1}{4} \operatorname{R}^{g}-\frac{3}{4}\|T\|^2)c(v)c(\xi)c(w)c(\xi)\biggr\}(x_0)\sigma(\xi)\\
	&=\int_{\|\xi\|=1} \operatorname{tr}\biggl\{m\|\xi\|^{-2 m-2}(\frac{1}{4} \operatorname{R}^{g}-\frac{3}{4}\|T\|^2)c(v)c(e_f)c(w)c(e_g)\xi_{f}\xi_{g}\biggr\}(x_0)\sigma(\xi)\nonumber\\
	&=\;\;\frac{1}{2}(\frac{1}{4} \operatorname{R}^{g}-\frac{3}{4}\|T\|^2)[2g(v,w)-2mg(v,w)]{\rm tr}[id]{\rm Vol}(S^{n-1})\nonumber\\
	&=\;\;(1-m)(\frac{1}{4} s-\frac{3}{4}\|T\|^2)g(v,w){\rm tr}[id]{\rm Vol}(S^{n-1}).\nonumber
\end{align}

\noindent{\bf (II-3-$\mathbb{H}$)}

\begin{align}
\sum_{i<j<k<t=1}^{2m}{\rm tr}\bigg(c(v)c(e_f)c(w)c(e_f)c(e_i)c(e_j)c(e_k)c(e_t)\bigg)=0,
\end{align}
then
\begin{align}
	&\int_{\|\xi\|=1}\operatorname{tr} \biggl\{\frac{3m}{4}\|\xi\|^{-2 m-2}\sum_{i<j<k<t=1}^{2m}dT(e_i,e_j,e_k,e_t)c(v)c(\xi)c(w)c(\xi)c(e_i)c(e_j)c(e_k)c(e_t)\biggr\}(x_0)\sigma(\xi)\\
	&=\int_{\|\xi\|=1}\operatorname{tr} \biggl\{\frac{3m}{4}\|\xi\|^{-2 m-2}\sum_{i<j<k<t=1}^{2m}dT(e_i,e_j,e_k,e_t)c(v)c(e_f)c(w)c(e_g)c(e_i)c(e_j)c(e_k)c(e_t)\xi_{f}\xi_{g}\biggr\}(x_0)\sigma(\xi)\nonumber\\
	&=\;\;\frac{3}{8}\sum_{i<j<k<t=1}^{2m}dT(e_i,e_j,e_k,e_t){\rm tr}\bigg(c(v)c(e_f)c(w)c(e_f)c(e_i)c(e_j)c(e_k)c(e_t)\bigg){\rm Vol}(S^{n-1})\nonumber\\
	&=\;\;0.\nonumber
\end{align}

Summing from {\bf (II-3-$\mathbb{A}$)} to {\bf (II-3-$\mathbb{H}$)}, we get
\begin{align}
	&\int_{\|\xi\|=1} \operatorname{tr}\bigg[\sigma_{2}(\mathcal{AB}) \sigma_{-2 m-2}\left(D^{-2 m}\right)(x_0)\bigg] \sigma(\xi)\\
	&=\;\;\bigg[\frac{m}{6} s g(v,w)-\frac{1}{3}{\rm Ric}(v,w)\bigg]{\rm tr}[id]{\rm Vol}(S^{n-1})\nonumber\\
	&\;\;\;\;+\sum_{a\neq j\neq l=1}^{2m} \bigg[\frac{9}{8}T^2(e_{a},e_{j},e_{l})g(v,w)-\frac{9}{4}T(v,e_{j},e_{l})T(w,e_{j},e_{l})\bigg]{\rm tr}[id]{\rm Vol}(S^{n-1})\nonumber\\
	&\;\;\;\;-3\sum_{a=1}^{2m} \nabla_{e_a}(T)(e_a,v,w){\rm tr}[id]{\rm Vol}(S^{n-1})+(1-m)(\frac{1}{4} s-\frac{3}{4}\|T\|^2)g(v,w){\rm tr}[id]{\rm Vol}(S^{n-1}).\nonumber
\end{align}

\noindent{\bf (II-4)} For $(-i) \sum_{j=1}^{2m} \partial_{\xi_{j}}\left[\sigma_{2}(\mathcal{A} \mathcal{B})\right] \partial_{x_{j}}\left[\sigma_{-2 m-1}\left(D^{-2 m}_T\right)\right]$:

According to \eqref{lem22} in lemma \ref{lem2} and \eqref{lemAB3} in lemma \ref{AB}, we get
\begin{align}\label{2-2m-1}
	&(-i) \sum_{j=1}^{2m} \partial_{\xi_{j}}\left[\sigma_{2}(\mathcal{A} \mathcal{B})\right] \partial_{x_{j}}\left[\sigma_{-2 m-1}\left(D^{-2 m}_T\right)\right](x_0)\\
	&=\;\;\frac{2 m }{3}\|\xi\|^{-2 m-2}\sum_{a,b,j=1}^{2m}  \operatorname{Ric}_{a b}  \xi_{a}\delta^{b}_{j} \bigg(c(v)c(dx_j)c(w)c(\xi)+c(v)c(\xi)c(w)c(dx_j)\bigg)\nonumber\\
	&\;\;\;\;-\frac{ m }{4}\|\xi\|^{-2 m-2} \sum_{a,b,j,t,s=1}^{2m} \operatorname{R}_{b a t s}\xi_{a}\delta^{b}_{j}\bigg(c(v)c(dx_j)c(w)c(\xi) c(e_s) c(e_t)+c(v)c(\xi)c(w)c(dx_j) c(e_s) c(e_t)\bigg)  \nonumber\\
	&\;\;\;\;-3m\|\xi\|^{-2 m-2}\sum_{a,b,j,\hat{j}<\hat{l}=1}^{2m}\frac{\partial{T}(\partial_{a},e_{\hat{j}},e_{\hat{l}})}{\partial{x_b}}\xi_{a}\delta^{b}_{j}\bigg(c(v)c(dx_j)c(w)c(\xi)c(e_{\hat{j}})c(e_{\hat{l}})+c(v)c(\xi)c(w)c(dx_j)c(e_{\hat{j}})c(e_{\hat{l}})\bigg). \nonumber
\end{align}

\noindent{\bf (II-4-$\mathbb{A}$)}

Based on the relation of the Clifford action and ${\rm tr}\mathcal{XY}={\rm tr}\mathcal{YX}$, we get
\begin{align}
	&\sum_{a,b=1}^{2m}{\rm tr}\bigg(c(v)c(e_b)c(w)c(e_a)+c(v)c(e_a)c(w)c(e_b)\bigg)=\sum_{a,b=1}^{2m}\bigg(2v_a w_b -2\delta_{a}^{b}g(v,w)+2v_b w_a\bigg){\rm tr}[id],
\end{align}
then
\begin{align}
	&\int_{\|\xi\|=1} {\rm tr}\biggl\{\frac{2 m }{3}\|\xi\|^{-2 m-2}\sum_{a,b,j=1}^{2m}  \operatorname{Ric}_{a b}  \xi_{a}\delta^{b}_{j}c(v)c(dx_j)c(w)c(\xi)+c(v)c(\xi)c(w)c(dx_j)\biggr\}(x_0)\sigma(\xi)\\
	&=\int_{\|\xi\|=1}{\rm tr} \biggl\{\frac{2 m }{3}\|\xi\|^{-2 m-2}\sum_{a,b,f=1}^{2m}  \operatorname{Ric}_{a b}  \xi_{a}\xi_{f}c(v)c(e_b)c(w)c(e_f)+c(v)c(e_f)c(w)c(e_b)\biggr\}(x_0)\sigma(\xi)\nonumber\\
	&=\;\;\frac{1}{3}\sum_{a,b=1}^{2m}  \operatorname{Ric}_{a b}  {\rm tr}\bigg(c(v)c(e_b)c(w)c(e_a)+c(v)c(e_a)c(w)c(e_b)\bigg)(x_0){\rm Vol}(S^{n-1})\nonumber\\
	&=\;\;\frac{2}{3}\bigg(2\operatorname{Ric}(v,w)-s g(v,w)\bigg){\rm tr}[id] {\rm Vol}(S^{n-1}).\nonumber
\end{align}

\noindent{\bf (II-4-$\mathbb{B}$)}

For the same reason, we get 
\begin{align}
	&\sum_{a,b,s,t=1}^{2m}\operatorname{R}_{bats}{\rm tr}\bigg(c(v)c(e_b)c(w)c(e_a) c(e_s) c(e_t)+c(v)c(e_a)c(w)c(e_b) c(e_s) c(e_t)\bigg)=0,
\end{align}
then
\begin{align}
	&\int_{\|\xi\|=1}\operatorname{tr} \biggl\{-\frac{ m }{4}\|\xi\|^{-2 m-2} \sum_{a,b,j,t,s=1}^{2m} \operatorname{R}_{b a t s}\xi_{a}\delta^{b}_{j}\bigg(c(v)c(dx_j)c(w)c(\xi) c(e_s) c(e_t)+c(v)c(\xi)c(w)c(dx_j) c(e_s) c(e_t)\bigg)\biggr\}(x_0)\sigma(\xi)\\
	&=\int_{\|\xi\|=1}\operatorname{tr} \biggl\{-\frac{ m }{4}\|\xi\|^{-2 m-2} \sum_{a,b,t,s,f=1}^{2m} \operatorname{R}_{b a t s}\xi_{a}\xi_{f}\bigg(c(v)c(e_b)c(w)c(e_f) c(e_s) c(e_t)+c(v)c(e_f)c(w)c(e_b) c(e_s) c(e_t)\bigg)\biggr\}(x_0)\sigma(\xi)\nonumber\\
	&=-\frac{1 }{8}\sum_{a,b,t,s=1}^{2m} \operatorname{R}_{b a t s}\operatorname{tr}\bigg(c(v)c(e_b)c(w)c(e_a) c(e_s) c(e_t)+c(v)c(e_a)c(w)c(e_b) c(e_s) c(e_t)\bigg)(x_0){\rm Vol}(S^{n-1})\nonumber\\
	&=\;\;0.\nonumber
\end{align}

\noindent{\bf (II-4-$\mathbb{C}$)}

By \eqref{pt},we get
\begin{align}
	&\sum_{a,b,\hat{j}\neq\hat{l}=1}^{2m}\nabla_{e_b}(T)(e_a,e_{\hat{j}},e_{\hat{l}}){\rm tr}\bigg(c(v)c(e_b)c(w)c(e_a)c(e_{\hat{j}})c(e_{\hat{l}})+c(v)c(e_a)c(w)c(e_b)c(e_{\hat{j}})c(e_{\hat{l}})\bigg)\\
	&=-8\sum_{a=1}^{2m}\nabla_{e_a}(T)(e_a,v,w){\rm tr}[id],\nonumber
\end{align}
then
\begin{align}
	&\int_{\|\xi\|=1}\operatorname{tr} \biggl\{-3m\|\xi\|^{-2 m-2}\sum_{a,b,j,\hat{j}<\hat{l}=1}^{2m}\frac{\partial{T}(\partial_{a},e_{\hat{j}},e_{\hat{l}})}{\partial{x_b}}\xi_{a}\delta^{b}_{j}\times\\
	&\;\;\;\;\;\;\;\;\;\;\;\;\;\;\;\;\times \bigg(c(v)c(dx_j)c(w)c(\xi)c(e_{\hat{j}})c(e_{\hat{l}})+c(v)c(\xi)c(w)c(dx_j)c(e_{\hat{j}})c(e_{\hat{l}})\bigg)\biggr\}(x_0)\sigma(\xi)\nonumber\\
	&=\int_{\|\xi\|=1}\operatorname{tr} \biggl\{-\frac{3m}{2}\sum_{a,b,f,\hat{j}\neq\hat{l}=1}^{2m}\frac{\partial{T}(\partial_{a},e_{\hat{j}},e_{\hat{l}})}{\partial{x_b}}\xi_{a}\xi_{f}\bigg(c(v)c(e_b)c(w)c(e_f)c(e_{\hat{j}})c(e_{\hat{l}})+c(v)c(e_f)c(w)c(e_b)c(e_{\hat{j}})c(e_{\hat{l}})\bigg)\biggr\}(x_0)\sigma(\xi)\nonumber\\
	&=-\frac{3}{4}\sum_{a,b,\hat{j}\neq\hat{l}=1}^{2m}\nabla_{e_b}(T)(e_a,e_{\hat{j}},e_{\hat{l}}){\rm tr}\bigg(c(v)c(e_b)c(w)c(e_a)c(e_{\hat{j}})c(e_{\hat{l}})+c(v)c(e_a)c(w)c(e_b)c(e_{\hat{j}})c(e_{\hat{l}})\bigg)(x_0){\rm Vol}(S^{n-1})\nonumber\\
	&=\;\;6\sum_{a=1}^{2m}\nabla_{e_a}(T)(e_a,v,w){\rm tr}[id] {\rm Vol}(S^{n-1}).\nonumber
\end{align}

Summing from {\bf (II-4-$\mathbb{A}$)} to {\bf (II-4-$\mathbb{C}$)}, we get
\begin{align}
	&\int_{\|\xi\|=1} \operatorname{tr}\bigg[(-i) \sum_{j=1}^{2m} \partial_{\xi_{j}}\left[\sigma_{2}(\mathcal{A} \mathcal{B})\right] \partial_{x_{j}}\left[\sigma_{-2 m-1}\left(D^{-2 m}_T\right)\right]\bigg] (x_0)\sigma(\xi)\\
	&=\;\;\frac{2}{3}\bigg(2\operatorname{Ric}(v,w)-s g(v,w)\bigg){\rm tr}[id] {\rm Vol}(S^{n-1})+6\sum_{a=1}^{2m}\nabla_{e_a}(T)(e_a,v,w){\rm tr}[id] {\rm Vol}(S^{n-1}).\nonumber
\end{align}

\noindent{\bf (II-5)} For $(-i) \sum_{j=1}^{2m} \partial_{\xi_{j}}\left[\sigma_{1}(\mathcal{A} \mathcal{B})\right] \partial_{x_{j}}\left[\sigma_{-2 m}\left(D^{-2 m}_T\right)\right]$:

According to \eqref{lem21} in lemma \ref{lem2}, we get
\begin{align}\label{1-2m}
	&\partial_{x_{j}}\left[\sigma_{-2 m}\left(D^{-2 m}_T\right)\right](x_0)\\
	&=\partial_{x_{j}}\bigg[\|\xi\|^{-2 m-2}\sum_{a,b,l,k=1}^{2m} \left(\delta_{a b}-\frac{m}{3} R_{a l b k} x^{l} x^{k}\right) \xi_{a} \xi_{b}\bigg](x_0)\nonumber\\
	&=\;\;0,\nonumber
\end{align}
then
\begin{align}
	&\int_{\|\xi\|=1} \operatorname{tr}\bigg[(-i) \sum_{j=1}^{2m} \partial_{\xi_{j}}\left[\sigma_{1}(\mathcal{A} \mathcal{B})\right] \partial_{x_{j}}\left[\sigma_{-2 m}\left(D^{-2 m}_T\right)\right]\bigg] (x_0)\sigma(\xi)=0.
\end{align}

\noindent{\bf (II-6)} For $-\frac{1}{2} \sum_{j ,l=1}^{2m} \partial_{\xi_{j}} \partial_{\xi_{l}}\left[\sigma_{2}(\mathcal{A} \mathcal{B})\right] \partial_{x_{j}} \partial_{x_{l}}\left[\sigma_{-2 m}\left(D^{-2 m}_T\right)\right]$:

According to \eqref{lem21} in lemma \ref{lem2} and \eqref{lemAB3} in lemma \ref{AB}, we get
\begin{align}\label{1-2m}
	&-\frac{1}{2} \sum_{j, l=1} ^{2m}\partial_{\xi_{j}} \partial_{\xi_{l}}\left[\sigma_{2}(\mathcal{A} \mathcal{B})\right] \partial_{x_{j}} \partial_{x_{l}}\left[\sigma_{-2 m}\left(D^{-2 m}_T\right)\right](x_0)\\
	&=-\frac{m}{6}\|\xi\|^{-2 m-2}\sum_{a,b,j,l=1}^{2m}\bigg({\rm R}_{albj}+{\rm R}_{ajbl} \bigg)\bigg[c(v)c(dx_l)c(w)c(dx_j)+c(v)c(dx_j)c(w)c(dx_l)\bigg]\xi_{a}\xi_{b}.\nonumber
\end{align}

Since
\begin{align}
	&{\rm tr}\sum_{a,j,l=1}^{2m}{\rm R}_{alaj}\bigg[c(v)c(dx_l)c(w)c(dx_j)+c(v)c(dx_j)c(w)c(dx_l)\bigg]\nonumber\\
	&=\bigg(4{\rm Ric}(v,w)-2s g(v,w)\bigg){\rm tr}[id],\nonumber
\end{align}
then
\begin{align}
	&\int_{\|\xi\|=1} {\rm tr}\biggl\{-\frac{1}{2} \sum_{j, l=1}^{2m} \partial_{\xi_{j}} \partial_{\xi_{l}}\left[\sigma_{2}(\mathcal{A} \mathcal{B})\right] \partial_{x_{j}} \partial_{x_{l}}\left[\sigma_{-2 m}\left(D^{-2 m}_T\right)\right]\biggr\}(x_0)\sigma(\xi)\\
	&=\int_{\|\xi\|=1} \biggl\{-\frac{m}{6}\|\xi\|^{-2 m-2}\sum_{a,b,j,l=1}^{2m}\bigg({\rm R}_{albj}+{\rm R}_{ajbl} \bigg){\rm tr}\bigg[c(v)c(dx_l)c(w)c(dx_j)+c(v)c(dx_j)c(w)c(dx_l)\bigg]\xi_{a}\xi_{b}\biggr\}(x_0)\sigma(\xi)\nonumber\\
	&=-\frac{1}{6}\sum_{a,j,l=1}^{2m}{\rm R}_{alaj}{\rm tr}\bigg[c(v)c(dx_l)c(w)c(dx_j)+c(v)c(dx_j)c(w)c(dx_l)\bigg](x_0){\rm Vol}(S^{n-1})\nonumber\\
	&=-\frac{1}{3}\bigg(2{\rm Ric}(v,w)-s g(v,w)\bigg){\rm tr}[id] {\rm Vol}(S^{n-1}).\nonumber
\end{align}

Summing from {\bf (II-1)} to {\bf (II-6)}, we get
\begin{align}\label{zabdt}
	&\int_{\|\xi\|=1} {\rm tr}\biggl\{	\sigma_{-2 m}\left(\mathcal{A} \mathcal{B} D^{-2 m}_T\right)\biggr\}(x_0)\sigma(\xi)\\
	&=-\frac{1}{6}\bigg({\rm Ric}(v,w)-\frac{1}{2} s g(v,w)\bigg){\rm tr}[id] {\rm Vol}(S^{n-1})-\frac{m-1}{12} s g(v,w){\rm tr}[id]{\rm Vol}(S^{n-1})\nonumber\\
	&\;\;\;\;-\frac{3(7-m)}{4}\|T\|^2 g(v,w){\rm tr}[id]{\rm Vol}(S^{n-1})+\frac{9}{2}\sum_{j,l=1}^{2m} T(v,e_{j},e_{l})T(w,e_{j},e_{l}){\rm tr}[id]{\rm Vol}(S^{n-1})\nonumber\\
	&\;\;\;\;+\frac{3}{2}\sum_{a=1}^{2m} \nabla_{e_a}(T)(e_a,v,w){\rm tr}[id]{\rm Vol}(S^{n-1})+3\sum_{j=1}^{2m} T(v,\nabla_{e_j}^{L}w,e_j){\rm tr}[id]{\rm Vol}(S^{n-1}).\nonumber
\end{align}
Since ${\rm tr}[id]=2^{m}$ and ${\rm Vol}(S^{n-1})=\frac{2 \pi^{m}}{\Gamma\left(m\right)}$, we obtain
\begin{align}\label{z2}
	\mathscr{B}_{2}=&\mathrm{Wres}\bigg(c(v)D_Tc(w) D_T D_T^{-n}\bigg)\\
	=&\;2^{m} \frac{2 \pi^{m}}{\Gamma\left(m\right)}\int_{M}\biggl\{-\frac{1}{6}\bigg({\rm Ric}(v,w)-\frac{1}{2} s g(v,w)\bigg)-\frac{m-1}{12} s g(v,w)-\frac{3(7-m)}{4}\|T\|^2 g(v,w)\nonumber\\
	&+\frac{9}{2}\sum_{j,l=1}^{2m} T(v,e_{j},e_{l})T(w,e_{j},e_{l})+\frac{3}{2}\sum_{a=1}^{2m} \nabla_{e_a}(T)(e_a,v,w)+3\sum_{j=1}^{2m} T(v,\nabla_{e_j}^{L}w,e_j)\biggr\}d{\rm Vol}_M.\nonumber
\end{align}
Hence Theorem \ref{thm} holds.
\end{proof}

As an application of Theorem \ref{thm}, we give the following definition.
\begin{defn}
	For a closed oriented manifold $M$, the spectral Einstein functional with torsion on $M$ is defined by 
	\begin{align}
		\mathscr{B}(v,w):= \;&2^{m} \frac{2 \pi^{m}}{\Gamma\left(m\right)}\int_{M}\biggl\{-\frac{1}{6}\mathbb{G}(v,w)-\frac{9}{2}\|T\|^2 g(v,w)+\frac{9}{2}\sum_{j,l=1}^{2m} T(v,e_{j},e_{l})T(w,e_{j},e_{l})\\
		&\;+\frac{3}{2}\sum_{a=1}^{2m} \nabla_{e_a}(T)(e_a,v,w)+3\sum_{j=1}^{2m} T(v,\nabla_{e_j}^{L}w,e_j)\biggr\}d{\rm Vol}_M,\nonumber
	\end{align}
 where  $g(v,w)=\sum_{a=1}^{n}v_{a} w_{a} $ and $ \mathbb{G}(v,w)=\operatorname{Ric}(v,w)-\frac{1}{2} s g(v,w) $  , $v=\sum_{a=1}^{n} v_{a}e_{a}, w=\sum_{b=1}^{n} w_{b}e_{b}.$
\end{defn}

\appendix
\section*{Appendix}
\setcounter{equation}{0}
\renewcommand\theequation{A.\arabic{equation}}
\renewcommand{\thethm}{A.\arabic{thm}}  
\setcounter{thm}{0} % 

In this appendix, we will prove some facts used in the computation of the trace.
\begin{lem}\label{A}
    \begin{align}
    	&{\bf (1)}\;{\rm tr}\bigg[c(X_1)c( X_2)\bigg]=-g(X_1,  X_2){\rm tr}[id];\label{trace2}\\
    	&{\bf (2)}\;{\rm tr}\bigg[c(X_1)c( X_2)c( X_3)c( X_4)\bigg]\label{trace4}\\
    	&=\bigg[g(X_1,  X_4)g( X_2,  X_3)-g(X_1,  X_3)g( X_2,  X_4)+g(X_1,  X_2)g( X_3,  X_4)\bigg]{\rm tr}[id];\nonumber\\
    	&{\bf (3)}\;{\rm tr}\bigg[c(X_1)c( X_2)c( X_3)c( X_4)c( X_5)c( X_6)\bigg]\label{trace6}\\
    	&=\bigg[-g(X_1,  X_6)g ( X_2,  X_5)g ( X_3,  X_4)+g(X_1,  X_6)g ( X_2,  X_4)g ( X_3,  X_5)-g(X_1,  X_6)g ( X_2,  X_3)g ( X_4,  X_5)\nonumber\\
    	&+g(X_1, X_5)g ( X_2,  X_6)g ( X_3, X_4)-g(X_1, X_5)g ( X_2,  X_4)g ( X_3, X_6)+g(X_1, X_5)g ( X_2, X_3)g ( X_4, X_6)\nonumber\\
    	&-g(X_1, X_4)g ( X_2, X_6)g ( X_3, X_5)+g(X_1, X_4)g ( X_2, X_5)g ( X_3, X_6)-g(X_1, X_4)g ( X_2, X_3)g ( X_5, X_6)\nonumber\\
    	&+g(X_1, X_3)g ( X_2, X_6)g ( X_4, X_5)-g(X_1, X_3)g ( X_2, X_5)g ( X_4, X_6)+g(X_1, X_3)g ( X_2, X_4)g ( X_5, X_6)\nonumber\\
    	&-g(X_1, X_2)g ( X_3, X_6)g ( X_4, X_5)+g(X_1, X_2)g ( X_3, X_5)g ( X_4, X_6)-g(X_1, X_2)g ( X_3, X_4)g ( X_5, X_6)\bigg]{\rm tr}[id];\nonumber\\
    	&{\bf (4)}\;{\rm tr}\bigg[c(X_1)c( X_2)c( X_3)c( X_4)c( X_5)c( X_6)c( X_7)c( X_8)\bigg]\label{trace8}\\
    	&=-g(X_1,  X_8){\rm tr}\bigg[c( X_2)c( X_3)c( X_4)c( X_5)c( X_6)c( X_7)\bigg]+g(X_1,  X_7){\rm tr}\bigg[c( X_2)c( X_3)c( X_4)c( X_5)c( X_6)c( X_8)\bigg]\nonumber\\
    	&-g(X_1,  X_6){\rm tr}\bigg[c( X_2)c( X_3)c( X_4)c( X_5)c( X_7)c( X_8)\bigg]+g(X_1,  X_5){\rm tr}\bigg[c( X_2)c( X_3)c( X_4)c( X_6)c( X_7)c( X_8)\bigg]\nonumber\\
    	&-g(X_1,  X_4){\rm tr}\bigg[c( X_2)c( X_3)c( X_5)c( X_6)c( X_7)c( X_8)\bigg]+g(X_1,  X_3){\rm tr}\bigg[c( X_2)c( X_4)c( X_5)c( X_6)c( X_7)c( X_8)\bigg]\nonumber\\
    	&-g(X_1,  X_2){\rm tr}\bigg[c( X_3)c( X_4)c( X_5)c( X_6)c( X_7)c( X_8)\bigg],\nonumber
    \end{align}
where $X_1, X_2, X_3, X_4, X_5, X_6, X_7, X_8\in \Gamma(TM).$
\end{lem}

\begin{proof}
{\bf (1)}
According to $c(e_{i})c(e_{j})+c(e_{j})c(e_{i})=-2g(e_{i}, e_{j})=-2\delta_i^j$ and ${\rm tr}\mathcal{XY}={\rm tr}\mathcal{YX}$, we get
\begin{align}
	{\rm tr}\bigg[c(X_1)c( X_2)\bigg]&=\frac{1}{2}{\rm tr}\bigg[c(X_1)c( X_2)+c(X_2)c( X_1)\bigg]\\
	&=\frac{1}{2}{\rm tr}\bigg[-2g(X_1,X_2)\bigg]\nonumber\\
	&=-g(X_1,  X_2){\rm tr}[id]\nonumber.
\end{align}
{\bf (2)}
According to $c(e_{i})c(e_{j})+c(e_{j})c(e_{i})=-2g(e_{i}, e_{j})=-2\delta_i^j$, interchange $c(X_1)$ with $c(X_2)$, $c(X_3)$ and $c(X_4)$ in that order, we get
\begin{align}\label{rr3}
	&{\rm tr}\bigg[c(X_1)c( X_2)c( X_3)c( X_4)\bigg]\\
	&=-{\rm tr}\bigg[c( X_2)c(X_1)c( X_3)c( X_4)\bigg]-2g(X_1,  X_2){\rm tr}\bigg[c( X_3)c( X_4)\bigg]\nonumber\\
	&={\rm tr}\bigg[c( X_2)c( X_3)c(X_1)c( X_4)\bigg]+2g(X_1,  X_3){\rm tr}\bigg[c( X_2)c( X_4)\bigg]-2g(X_1,  X_2){\rm tr}\bigg[c( X_3)c( X_4)\bigg]\nonumber\\
	&=-{\rm tr}\bigg[c( X_2)c( X_3)c( X_4)c(X_1)\bigg]-2g(X_1,  X_4){\rm tr}\bigg[c( X_2)c( X_3)\bigg]\nonumber\\
	&\;\;\;\;+2g(X_1,  X_3){\rm tr}\bigg[c( X_2)c( X_4)\bigg]-2g(X_1,  X_2){\rm tr}\bigg[c( X_3)c( X_4)\bigg]\nonumber,
\end{align}
 since ${\rm tr}\mathcal{XY}={\rm tr}\mathcal{YX}$, then ${\rm tr}\bigg[c( X_2)c( X_3)c( X_4)c(X_1)\bigg]={\rm tr}\bigg[c(X_1)c( X_2)c( X_3)c( X_4)\bigg]$, hence
\begin{align}\label{rrr3}
	&{\rm tr}\bigg[c(X_1)c( X_2)c( X_3)c( X_4)\bigg]\\
	&=-g(X_1,  X_4){\rm tr}\bigg[c( X_2)c( X_3)\bigg]+g(X_1,  X_3){\rm tr}\bigg[c( X_2)c( X_4)\bigg]-g(X_1,  X_2){\rm tr}\bigg[c( X_3)c( X_4)\bigg]\nonumber\\
	&=\bigg[g(X_1,  X_4)g( X_2,  X_3)-g(X_1,  X_3)g( X_2,  X_4)+g(X_1,  X_2)g( X_3,  X_4)\bigg]{\rm tr}[id]\nonumber.
\end{align}

{\bf (3)}Similarly, use $c(e_{i})c(e_{j})+c(e_{j})c(e_{i})=-2g(e_{i}, e_{j})=-2\delta_i^j$, interchange $c(X_1)$ with $c(X_2)$ ... $c(X_6)$ in that order, we get 
\begin{align}\label{rr4}
	&{\rm tr}\bigg[c(X_1)c( X_2)c( X_3)c( X_4)c( X_5)c( X_6)\bigg]\\
	=&-{\rm tr}\bigg[c( X_2)c(X_1)c( X_3)c( X_4)c( X_5)c( X_6)\bigg]-2g(X_1,  X_2){\rm tr}\bigg[c( X_3)c( X_4)c( X_5)c( X_6)\bigg]\nonumber\\
	=&\;\;{\rm tr}\bigg[c( X_2)c( X_3)c(X_1)c( X_4)c( X_5)c( X_6)\bigg]+2g(X_1,  X_3){\rm tr}\bigg[c( X_2)c( X_4)c( X_5)c( X_6)\bigg]\nonumber\\
	&\;\;-2g(X_1,  X_2){\rm tr}\bigg[c( X_3)c( X_4)c( X_5)c( X_6)\bigg]\nonumber\\
	=&\;\;...\nonumber\\
	=&-{\rm tr}\bigg[c( X_2)c( X_3)c( X_4)c( X_5)c( X_6)c(X_1)\bigg]-2g(X_1,  X_6){\rm tr}\bigg[c( X_2)c( X_3)c( X_4)c( X_5)\bigg]\nonumber\\
	&\;\;+2g(X_1,  X_5){\rm tr}\bigg[c( X_2)c( X_3)c( X_4)c( X_6)\bigg]-2g(X_1,  X_4){\rm tr}\bigg[c( X_2)c( X_3)c( X_5)c( X_6)\bigg]\nonumber\\
	&\;\;+2g(X_1,  X_3){\rm tr}\bigg[c( X_2)c( X_4)c( X_5)c( X_6)\bigg]-2g(X_1,  X_2){\rm tr}\bigg[c( X_3)c( X_4)c( X_5)c( X_6)\bigg],\nonumber
\end{align}
 since ${\rm tr}\mathcal{XY}={\rm tr}\mathcal{YX}$, then ${\rm tr}\bigg[c( X_2)c( X_3)c( X_4)c( X_5)c( X_6)c(X_1)\bigg]={\rm tr}\bigg[c(X_1)c( X_2)c( X_3)c( X_4)c( X_5)c( X_6)\bigg]$, we get
\begin{align}\label{trace41}
	&{\rm tr}\bigg[c(X_1)c( X_2)c( X_3)c( X_4)c( X_5)c( X_6)\bigg]\\
	=&-g(X_1,  X_6){\rm tr}\bigg[c( X_2)c( X_3)c( X_4)c( X_5)\bigg]+g(X_1,  X_5){\rm tr}\bigg[c( X_2)c( X_3)c( X_4)c( X_6)\bigg]\nonumber\\
	&-g(X_1,  X_4){\rm tr}\bigg[c( X_2)c( X_3)c( X_5)c( X_6)\bigg]+g(X_1,  X_3){\rm tr}\bigg[c( X_2)c( X_4)c( X_5)c( X_6)\bigg]\nonumber\\
	&-g(X_1,  X_2){\rm tr}\bigg[c( X_3)c( X_4)c( X_5)c( X_6)\bigg]\nonumber\\
	=&\bigg[-g(X_1,  X_6)g ( X_2,  X_5)g ( X_3,  X_4)+g(X_1,  X_6)g ( X_2,  X_4)g ( X_3,  X_5)-g(X_1,  X_6)g ( X_2,  X_3)g ( X_4,  X_5)\nonumber\\
	&+g(X_1, X_5)g ( X_2,  X_6)g ( X_3, X_4)-g(X_1, X_5)g ( X_2,  X_4)g ( X_3, X_6)+g(X_1, X_5)g ( X_2, X_3)g ( X_4, X_6)\nonumber\\
	&-g(X_1, X_4)g ( X_2, X_6)g ( X_3, X_5)+g(X_1, X_4)g ( X_2, X_5)g ( X_3, X_6)-g(X_1, X_4)g ( X_2, X_3)g ( X_5, X_6)\nonumber\\
	&+g(X_1, X_3)g ( X_2, X_6)g ( X_4, X_5)-g(X_1, X_3)g ( X_2, X_5)g ( X_4, X_6)+g(X_1, X_3)g ( X_2, X_4)g ( X_5, X_6)\nonumber\\
	&-g(X_1, X_2)g ( X_3, X_6)g ( X_4, X_5)+g(X_1, X_2)g ( X_3, X_5)g ( X_4, X_6)-g(X_1, X_2)g ( X_3, X_4)g ( X_5, X_6)\bigg]{\rm tr}[id].\nonumber
\end{align}

{\bf (4)} Interchange $c(X_1)$ with $c(X_2)$ ... $c(X_8)$ in that order, we get 
\begin{align}\label{trace81}
	&{\rm tr}\bigg[c(X_1)c( X_2)c( X_3)c( X_4)c( X_5)c( X_6)c( X_7)c( X_8)\bigg]\\
	=&-{\rm tr}\bigg[c( X_2)c(X_1)c( X_3)c( X_4)c( X_5)c( X_6)c( X_7)c( X_8)\bigg]-2g(X_1,  X_2){\rm tr}\bigg[c( X_3)c( X_4)c( X_5)c( X_6)c( X_7)c( X_8)\bigg]\nonumber\\
	=&\;\;{\rm tr}\bigg[c( X_2)c( X_3)c(X_1)c( X_4)c( X_5)c( X_6)c( X_7)c( X_8)\bigg]+2g(X_1,  X_3){\rm tr}\bigg[c( X_2)c( X_4)c( X_5)c( X_6)c( X_7)c( X_8)\bigg]\nonumber\\
	&\;\;-2g(X_1,  X_2){\rm tr}\bigg[c( X_3)c( X_4)c( X_5)c( X_6)c( X_7)c( X_8)\bigg]\nonumber\\
	=&\;\;...\nonumber\\
	=&-{\rm tr}\bigg[c( X_2)c( X_3)c( X_4)c( X_5)c( X_6)c( X_7)c( X_8)c(X_1)\bigg]-2g(X_1,  X_8){\rm tr}\bigg[c( X_2)c( X_3)c( X_4)c( X_5)c( X_6)c( X_7)\bigg]\nonumber\\
	&+2g(X_1,  X_7){\rm tr}\bigg[c( X_2)c( X_3)c( X_4)c( X_5)c( X_6)c( X_8)\bigg]-2g(X_1,  X_6){\rm tr}\bigg[c( X_2)c( X_3)c( X_4)c( X_5)c( X_7)c( X_8)\bigg]\nonumber\\
	&\;\;+2g(X_1,  X_5){\rm tr}\bigg[c( X_2)c( X_3)c( X_4)c( X_6)c( X_7)c( X_8)\bigg]-2g(X_1,  X_4){\rm tr}\bigg[c( X_2)c( X_3)c( X_5)c( X_6)c( X_7)c( X_8)\bigg]\nonumber\\
	&\;\;+2g(X_1,  X_3){\rm tr}\bigg[c( X_2)c( X_4)c( X_5)c( X_6)c( X_7)c( X_8)\bigg]-2g(X_1,  X_2){\rm tr}\bigg[c( X_3)c( X_4)c( X_5)c( X_6)c( X_7)c( X_8)\bigg],\nonumber
\end{align}
since ${\rm tr}\mathcal{XY}={\rm tr}\mathcal{YX}$, \\
then ${\rm tr}\bigg[c( X_2)c( X_3)c( X_4)c( X_5)c( X_6)c( X_7)c( X_8)c(X_1)\bigg]={\rm tr}\bigg[c(X_1)c( X_2)c( X_3)c( X_4)c( X_5)c( X_6)c( X_7)c( X_8)\bigg]$, thus
\begin{align}\label{trace82}
	&{\rm tr}\bigg[c(X_1)c( X_2)c( X_3)c( X_4)c( X_5)c( X_6)c( X_7)c( X_8)\bigg]\\
	=&-g(X_1,  X_8){\rm tr}\bigg[c( X_2)c( X_3)c( X_4)c( X_5)c( X_6)c( X_7)\bigg]+g(X_1,  X_7){\rm tr}\bigg[c( X_2)c( X_3)c( X_4)c( X_5)c( X_6)c( X_8)\bigg]\nonumber\\
	&-g(X_1,  X_6){\rm tr}\bigg[c( X_2)c( X_3)c( X_4)c( X_5)c( X_7)c( X_8)\bigg]+g(X_1,  X_5){\rm tr}\bigg[c( X_2)c( X_3)c( X_4)c( X_6)c( X_7)c( X_8)\bigg]\nonumber\\
	&-g(X_1,  X_4){\rm tr}\bigg[c( X_2)c( X_3)c( X_5)c( X_6)c( X_7)c( X_8)\bigg]+g(X_1,  X_3){\rm tr}\bigg[c( X_2)c( X_4)c( X_5)c( X_6)c( X_7)c( X_8)\bigg]\nonumber\\
	&-g(X_1,  X_2){\rm tr}\bigg[c( X_3)c( X_4)c( X_5)c( X_6)c( X_7)c( X_8)\bigg].\nonumber
\end{align}

\end{proof}
\noindent{\bf (I-$\mathbb{B}$)}\eqref{chu}:

\begin{align}
	&\operatorname{tr}\bigg(\sum_{j\neq l,\hat{j}\neq \hat{l}=1}^{2m}c(v)c(w)c(e_j)c(e_l)c(e_{\hat{j}})c(e_{\hat{l}})\bigg)\nonumber\\
	&=\sum_{j\neq l,\hat{j}\neq \hat{l}=1}^{2m}\bigg[v_{\hat{l}} w_l \delta_{j}^{\hat{j}}-v_{\hat{l}} w_j \delta_{l}^{\hat{j}}-v_{\hat{j}} w_l \delta_{j}^{\hat{l}}+v_{\hat{j}} w_j \delta_{l}^{\hat{l}}-v_l w_{\hat{l}} \delta_{j}^{\hat{j}}+v_l w_{\hat{j}} \delta_{j}^{\hat{l}}+v_j w_{\hat{l}} \delta_{l}^{\hat{j}}-v_j w_{\hat{j}} \delta_{l}^{\hat{l}}\nonumber\\
	&\;\;\;\;\;\;\;\;\;\;\;\;\;\;\;\;\;-\delta_{j}^{\hat{l}}\delta_{l}^{\hat{j}}g(v,w)+\delta_{j}^{\hat{j}}\delta_{l}^{\hat{l}}g(v,w)\bigg]{\rm tr}[id].\nonumber
\end{align}

\begin{proof}
According to \eqref{trace6} in lemma \ref{A}, $g(e_{i}, e_{j})=\delta_i^j$, $g(v,e_i)=v_i$ and $ g(w,e_j)=w_j$,  we get
\begin{align}\label{Achu}
	&\operatorname{tr}\bigg(\sum_{j\neq l,\hat{j}\neq \hat{l}=1}^{2m}c(v)c(w)c(e_j)c(e_l)c(e_{\hat{j}})c(e_{\hat{l}})\bigg)\\
	&=\sum_{j\neq l,\hat{j}\neq \hat{l}=1}^{2m}\bigg[-g(v,  e_{\hat{l}})g ( w,  e_{\hat{j}})g ( e_j,  e_l)+g(v,  e_{\hat{l}})g ( w,  e_l)g ( e_j,  e_{\hat{j}})-g(v,  e_{\hat{l}})g ( w,  e_j)g ( e_l,  e_{\hat{j}})\nonumber\\
	&\;\;\;\;\;\;\;\;\;\;\;\;\;\;\;\;\;+g(v, e_{\hat{j}})g ( w,  e_{\hat{l}})g ( e_j, e_l)-g(v, e_{\hat{j}})g ( w,  e_l)g ( e_j, e_{\hat{l}})+g(v, e_{\hat{j}})g ( w, e_j)g ( e_l, e_{\hat{l}})\nonumber\\
	&\;\;\;\;\;\;\;\;\;\;\;\;\;\;\;\;\;-g(v, e_l)g ( w, e_{\hat{l}})g ( e_j, e_{\hat{j}})+g(v, e_l)g ( w, e_{\hat{j}})g ( e_j, e_{\hat{l}})-g(v, e_l)g ( w, e_j)g ( e_{\hat{j}}, e_{\hat{l}})\nonumber\\
	&\;\;\;\;\;\;\;\;\;\;\;\;\;\;\;\;\;+g(v, e_j)g ( w, e_{\hat{l}})g ( e_l, e_{\hat{j}})-g(v, e_j)g ( w, e_{\hat{j}})g ( e_l, e_{\hat{l}})+g(v, e_j)g ( w, e_l)g ( e_{\hat{j}}, e_{\hat{l}})\nonumber\\
	&\;\;\;\;\;\;\;\;\;\;\;\;\;\;\;\;\;-g(v, w)g ( e_j, e_{\hat{l}})g ( e_l, e_{\hat{j}})+g(v, w)g ( e_j, e_{\hat{j}})g ( e_l, e_{\hat{l}})\bigg]{\rm tr}[id]\nonumber \\
	&=\sum_{j\neq l,\hat{j}\neq \hat{l}=1}^{2m}\bigg[v_{\hat{l}} w_l \delta_{j}^{\hat{j}}-v_{\hat{l}} w_j \delta_{l}^{\hat{j}}-v_{\hat{j}} w_l \delta_{j}^{\hat{l}}+v_{\hat{j}} w_j \delta_{l}^{\hat{l}}-v_l w_{\hat{l}} \delta_{j}^{\hat{j}}+v_l w_{\hat{j}} \delta_{j}^{\hat{l}}+v_j w_{\hat{l}} \delta_{l}^{\hat{j}}-v_j w_{\hat{j}} \delta_{l}^{\hat{l}}\nonumber\\
	&\;\;\;\;\;\;\;\;\;\;\;\;\;\;\;\;\;-\delta_{j}^{\hat{l}}\delta_{l}^{\hat{j}}g(v,w)+\delta_{j}^{\hat{j}}\delta_{l}^{\hat{l}}g(v,w)\bigg]{\rm tr}[id].\nonumber
\end{align}

Therefore, \eqref{chu} holds.
\end{proof}

\noindent{\bf (I-$\mathbb{B}$)}\eqref{1b}:

\begin{align}
	&\int_{\|\xi\|=1} {\rm tr}\biggl\{-\frac{9m(m-1)}{2}\|\xi\|^{-2 m-2}\sum_{a,b,j<l,\hat{j}<\hat{l}=1}^{2m}T(e_a,e_j,e_l)T(e_b, e_{\hat{j}},e_{\hat{l}})c(v)c(w)c(e_j)c(e_l)c(e_{\hat{j}})c(e_{\hat{l}})\xi_{a}\xi_b\biggr\}(x_0)\sigma(\xi)\nonumber\\
	&=-\frac{9(m-1)}{16}\sum_{a,b,j\neq l,\hat{j}\neq \hat{l}=1}^{2m}T(e_a,e_j,e_l)T(e_a, e_{\hat{j}},e_{\hat{l}}){\rm tr}\bigg(c(v)c(w)c(e_j)c(e_l)c(e_{\hat{j}})c(e_{\hat{l}})\bigg){\rm Vol}(S^{n-1})\nonumber\\
	&=-\frac{9(m-1)}{8}\sum_{a,j\neq l=1}^{2m}T^2(e_a,e_j,e_l)g(v,w){\rm tr}[id]{\rm Vol}(S^{n-1}).\nonumber
\end{align}

\begin{proof}
Based on $\int_{\|\xi\|=1} \xi_{a} \xi_{b}\sigma(\xi)=\frac{1}{n}\delta_{a}^{b}{\rm Vol}(S^{n-1})$ and $\sum_{j<l,a=1}^{2m}T(e_a,e_j,e_l)=\frac{1}{2} \sum_{j,l,a=1}^{2m}T(e_a,e_j,e_l)$, where $n=2m$, then

\begin{align}\label{a1b}
	&\int_{\|\xi\|=1} {\rm tr}\biggl\{-\frac{9m(m-1)}{2}\|\xi\|^{-2 m-2}\sum_{a,b,j<l,\hat{j}<\hat{l}=1}^{2m}T(e_a,e_j,e_l)T(e_b, e_{\hat{j}},e_{\hat{l}})c(v)c(w)c(e_j)c(e_l)c(e_{\hat{j}})c(e_{\hat{l}})\xi_{a}\xi_b\biggr\}(x_0)\sigma(\xi)\\
	&=-\frac{9(m-1)}{16}\sum_{a,b,j\neq l,\hat{j}\neq \hat{l}=1}^{2m}T(e_a,e_j,e_l)T(e_a, e_{\hat{j}},e_{\hat{l}}){\rm tr}\bigg(c(v)c(w)c(e_j)c(e_l)c(e_{\hat{j}})c(e_{\hat{l}})\bigg){\rm Vol}(S^{n-1})\nonumber\\
	&=-\frac{9(m-1)}{16}\sum_{a,b,j\neq l,\hat{j}\neq \hat{l}=1}^{2m}T(e_a,e_j,e_l)T(e_a, e_{\hat{j}},e_{\hat{l}})\bigg[v_{\hat{l}} w_l \delta_{j}^{\hat{j}}-v_{\hat{l}} w_j \delta_{l}^{\hat{j}}-v_{\hat{j}} w_l \delta_{j}^{\hat{l}}+v_{\hat{j}} w_j \delta_{l}^{\hat{l}}-v_l w_{\hat{l}} \delta_{j}^{\hat{j}}\nonumber\\
	&\;\;\;\;\;\;\;\;\;\;\;\;\;\;\;\;\;+v_l w_{\hat{j}} \delta_{j}^{\hat{l}}+v_j w_{\hat{l}} \delta_{l}^{\hat{j}}-v_j w_{\hat{j}} \delta_{l}^{\hat{l}}-\delta_{j}^{\hat{l}}\delta_{l}^{\hat{j}}g(v,w)+\delta_{j}^{\hat{j}}\delta_{l}^{\hat{l}}g(v,w)\bigg]{\rm tr}[id]{\rm Vol}(S^{n-1})\nonumber\\
	&=-\frac{9(m-1)}{16}\sum_{a,j\neq l=1}^{2m}\bigg[T(e_a,e_j,w)T(e_a,e_j,v)-T(e_a,w,e_l)T(e_a,e_l,v)-T(e_a,e_j,w)T(e_a,v,e_j)\nonumber\\
	&+T(e_a,w,e_l)T(e_a,v,e_l)-T(e_a,e_j,v)T(e_a,e_j,w)+T(e_a,e_j,v)T(e_a,w,e_j)+T(e_a,v,e_l)T(e_a,e_l,w)\nonumber\\
	&+T(e_a,v,e_l)T(e_a,w,e_l)-T(e_a,e_j,e_l)T(e_a,e_l,e_j)g(v,w)+T(e_a,e_j,e_l)T(e_a,e_j,e_l)g(v,w)\bigg]{\rm tr}[id]{\rm Vol}(S^{n-1})\nonumber\\
	&=-\frac{9(m-1)}{8}\sum_{a,j\neq l=1}^{2m}T^2(e_a,e_j,e_l)g(v,w){\rm tr}[id]{\rm Vol}(S^{n-1}).\nonumber
\end{align}
Therefore, \eqref{1b} holds.
\end{proof}

\noindent{\bf (I-$\mathbb{D}$)}\eqref{pt} and {\bf (II-1-$\mathbb{C}$)}\eqref{pt2}:

\begin{align}
	\frac{\partial{T}(\partial_{a},e_j,e_l)} {\partial x_a}(x_0)=\nabla_{e_a}(T)(e_{a},e_{j},e_{l})(x_0),\nonumber
\end{align}
and
\begin{align}
	\frac{\partial{T}(e_{f},e_{\alpha},e_{\beta})} {\partial x_j}(x_0)=\nabla_{e_j}(T)(e_{f},e_{\alpha},e_{\beta})(x_0).\nonumber
\end{align}

\begin{proof}
	
\begin{align}
	\nabla_{e_j}(T)(e_{\alpha},e_{\beta},e_{\gamma})&={e_j}\bigg(T(e_{\alpha},e_{\beta},e_{\gamma})\bigg)-T(\nabla_{e_j}e_{\alpha},e_{\beta},e_{\gamma})\\
	&-T(e_{\alpha},\nabla_{e_j}e_{\beta},e_{\gamma})-T(e_{\alpha},e_{\beta},\nabla_{e_j}e_{\gamma}),\nonumber
\end{align}
since in the normal coordinates, $\nabla_{e_j}e_{\alpha}(x_0)=\nabla_{e_j}e_{\beta}(x_0)=\nabla_{e_j}e_{\gamma}(x_0)=0$, we have
\begin{align}
	\nabla_{e_j}(T)(e_{\alpha},e_{\beta},e_{\gamma})(x_0)&={e_j}\bigg(T(e_{\alpha},e_{\beta},e_{\gamma})\bigg)(x_0)\\
	&=\frac{\partial T(e_{\alpha},e_{\beta},e_{\gamma})}{\partial x_j}(x_0).\nonumber
\end{align}
Therefore, \eqref{pt} and \eqref{pt2} holds.
\end{proof}

\noindent{\bf (II-1-$\mathbb{A}$)}\eqref{t1}:

\begin{align}
	&{\rm tr} \bigg(\sum_{f\neq\alpha\neq\beta,\hat{f}\neq\hat{\alpha}\neq\hat{\beta}=1}^{2m}c(v)c(e_{f})c(e_{\alpha})c(e_{\beta})c(w)c(e_{\hat{f}})c(e_{\hat{\alpha}})c(e_{\hat{\beta}})\bigg)\nonumber\\
	&=\sum_{f\neq\alpha\neq\beta,\hat{f}\neq\hat{\alpha}\neq\hat{\beta}=1}^{2m}\bigg[v_f \delta_{\alpha}^{\hat{\beta}} \delta_{\hat{\alpha}}^{\beta} w_{\hat{f}}-v_f \delta_{\alpha}^{\hat{\beta}} \delta_{\hat{f}}^{\beta} w_{\hat{\alpha}}-v_f \delta_{\alpha}^{\hat{\alpha}} \delta_{\hat{\beta}}^{\beta} w_{\hat{f}}+v_f \delta_{\alpha}^{\hat{\alpha}} \delta_{\hat{f}}^{\beta} w_{\hat{\beta}}+v_f \delta_{\alpha}^{\hat{f}} \delta_{\hat{\beta}}^{\beta} w_{\hat{\alpha}}-v_f \delta_{\alpha}^{\hat{f}} \delta_{\hat{\alpha}}^{\beta} w_{\hat{\beta}}\nonumber\\
	&\;\;\;\;\;\;\;\;\;\;\;\;\;\;\;\;\;\;\;\;\;\;\;\;\;\;\;\;-v_{\alpha} \delta_{f}^{\hat{\beta}} \delta_{\hat{\alpha}}^{\beta} w_{\hat{f}}+v_{\alpha} \delta_{f}^{\hat{\beta}} \delta_{\hat{f}}^{\beta} w_{\hat{\alpha}}+v_{\alpha} \delta_{f}^{\hat{\alpha}} \delta_{\hat{\beta}}^{\beta} w_{\hat{f}}-v_{\alpha} \delta_{f}^{\hat{\alpha}} \delta_{\hat{f}}^{\beta} w_{\hat{\beta}}-v_{\alpha} \delta_{f}^{\hat{f}} \delta_{\hat{\beta}}^{\beta} w_{\hat{\alpha}}+v_{\alpha} \delta_{f}^{\hat{f}} \delta_{\hat{\alpha}}^{\beta} w_{\hat{\beta}}\nonumber\\
	&\;\;\;\;\;\;\;\;\;\;\;\;\;\;\;\;\;\;\;\;\;\;\;\;\;\;\;\;+v_{\beta} \delta_{f}^{\hat{\beta}} \delta_{\hat{\alpha}}^{\alpha} w_{\hat{f}}-v_{\beta} \delta_{f}^{\hat{\beta}} \delta_{\hat{f}}^{\alpha} w_{\hat{\alpha}}-v_{\beta} \delta_{f}^{\hat{\alpha}} \delta_{\hat{\beta}}^{\alpha} w_{\hat{f}}+v_{\beta} \delta_{f}^{\hat{\alpha}} \delta_{\hat{f}}^{\alpha} w_{\hat{\beta}}+v_{\beta} \delta_{f}^{\hat{f}} \delta_{\hat{\beta}}^{\alpha} w_{\hat{\alpha}}-v_{\beta} \delta_{f}^{\hat{f}} \delta_{\hat{\alpha}}^{\alpha} w_{\hat{\beta}}\nonumber\\
	&\;\;\;\;\;\;\;\;\;\;\;\;\;\;\;\;\;\;\;\;\;\;\;\;\;\;\;\;+v_{\hat{f}} \delta_{f}^{\hat{\beta}} \delta_{\hat{\alpha}}^{\alpha} w_{\beta}-v_{\hat{f}} \delta_{f}^{\hat{\beta}} \delta_{\hat{\alpha}}^{\beta} w_{\alpha}-v_{\hat{f}} \delta_{f}^{\hat{\alpha}} \delta_{\hat{\beta}}^{\alpha} w_{\beta}+v_{\hat{f}} \delta_{f}^{\hat{\alpha}} \delta_{\hat{\beta}}^{\beta} w_{\alpha}+v_{\hat{f}} \delta_{\beta}^{\hat{\alpha}} \delta_{\hat{\beta}}^{\alpha} w_{f}-v_{\hat{f}} \delta_{\beta}^{\hat{\beta}} \delta_{\hat{\alpha}}^{\alpha} w_{f}\nonumber\\
	&\;\;\;\;\;\;\;\;\;\;\;\;\;\;\;\;\;\;\;\;\;\;\;\;\;\;\;\;-v_{\hat{\alpha}} \delta_{f}^{\hat{\beta}} \delta_{\hat{f}}^{\alpha} w_{\beta}+v_{\hat{\alpha}} \delta_{f}^{\hat{\beta}} \delta_{\hat{f}}^{\beta} w_{\alpha}+v_{\hat{\alpha}} \delta_{f}^{\hat{f}} \delta_{\hat{\beta}}^{\alpha} w_{\beta}-v_{\hat{\alpha}} \delta_{f}^{\hat{f}} \delta_{\hat{\beta}}^{\beta} w_{\alpha}-v_{\hat{\alpha}} \delta_{\beta}^{\hat{f}} \delta_{\hat{\beta}}^{\alpha} w_{f}+v_{\hat{\alpha}} \delta_{\beta}^{\hat{\beta}} \delta_{\hat{f}}^{\alpha} w_{f}\nonumber\\
	&\;\;\;\;\;\;\;\;\;\;\;\;\;\;\;\;\;\;\;\;\;\;\;\;\;\;\;\;+v_{\hat{\beta
	}} \delta_{f}^{\hat{\alpha}} \delta_{\hat{f}}^{\alpha} w_{\beta}-v_{\hat{\beta}} \delta_{f}^{\hat{\alpha}} \delta_{\hat{f}}^{\beta} w_{\alpha}-v_{\hat{\beta}} \delta_{f}^{\hat{f}} \delta_{\hat{\alpha}}^{\alpha} w_{\beta}+v_{\hat{\beta}} \delta_{f}^{\hat{f}} \delta_{\hat{\alpha}}^{\beta} w_{\alpha}+v_{\hat{\beta}} \delta_{\beta}^{\hat{f}} \delta_{\hat{\alpha}}^{\alpha} w_{f}-v_{\hat{\beta}} \delta_{\beta}^{\hat{\alpha}} \delta_{\hat{f}}^{\alpha} w_{f}\nonumber\\
	&\;\;\;\;\;\;\;\;\;\;\;- \delta_{f}^{\hat{\beta}} \delta_{\hat{\alpha}}^{\alpha}\delta_{\hat{f}}^{\beta}g(v,w) +\delta_{f}^{\hat{\beta}} \delta_{\hat{f}}^{\alpha}\delta_{\hat{\alpha}}^{\beta} g(v,w)+ \delta_{f}^{\hat{\alpha}} \delta_{\hat{\beta}}^{\alpha}\delta_{\hat{f}}^{\beta} g(v,w)-\delta_{f}^{\hat{\alpha}}\delta_{\hat{f}}^{\alpha} \delta_{\hat{\beta}}^{\beta} g(v,w)-\delta_{f}^{\hat{f}}\delta_{\hat{\beta}}^{\alpha} \delta_{\hat{\alpha}}^{\beta} g(v,w)+\delta_{f}^{\hat{f}} \delta_{\hat{\alpha}}^{\alpha}\delta_{\hat{\beta}}^{\beta} g(v,w)\bigg]{\rm tr}[id].\nonumber
\end{align}

\begin{proof}

According to \eqref{trace8} in lemma \ref{A}, $g(e_{i}, e_{j})=\delta_i^j$, $g(v,e_i)=v_i$ and $ g(w,e_j)=w_j$,  we get
\begin{align}\label{At1}
	&{\rm tr} \bigg(\sum_{f\neq\alpha\neq\beta,\hat{f}\neq\hat{\alpha}\neq\hat{\beta}=1}^{2m}c(v)c(e_{f})c(e_{\alpha})c(e_{\beta})c(w)c(e_{\hat{f}})c(e_{\hat{\alpha}})c(e_{\hat{\beta}})\bigg)\\
	=&\sum_{f\neq\alpha\neq\beta,\hat{f}\neq\hat{\alpha}\neq\hat{\beta}=1}^{2m}\biggl\{-g(v,  e_{\hat{\beta}}){\rm tr}\bigg[c( e_f)c( e_\alpha)c( e_\beta)c( w)c( e_{\hat{f}})c( e_{\hat{\alpha}})\bigg]+g(v,  e_{\hat{\alpha}}){\rm tr}\bigg[c( e_f)c( e_\alpha)c( e_\beta)c( w)c( e_{\hat{f}})c( e_{\hat{\beta}})\bigg]\nonumber\\
	&\;\;\;\;\;\;\;\;\;\;\;\;\;\;\;\;\;\;\;\;\;\;\;\;\;\;-g(v,  e_{\hat{f}}){\rm tr}\bigg[c( e_f)c( e_\alpha)c( e_\beta)c( w)c( e_{\hat{\alpha}})c( e_{\hat{\beta}})\bigg]+g(v,  w){\rm tr}\bigg[c( e_f)c( e_\alpha)c( e_\beta)c( e_{\hat{f}})c( e_{\hat{\alpha}})c( e_{\hat{\beta}})\bigg]\nonumber\\
	&\;\;\;\;\;\;\;\;\;\;\;\;\;\;\;\;\;\;\;\;\;\;\;\;\;\;-g(v,  e_\beta){\rm tr}\bigg[c( e_f)c( e_\alpha)c( w)c( e_{\hat{f}})c( e_{\hat{\alpha}})c( e_{\hat{\beta}})\bigg]+g(v,  e_\alpha){\rm tr}\bigg[c( e_f)c( e_\beta)c( w)c( e_{\hat{f}})c( e_{\hat{\alpha}})c( e_{\hat{\beta}})\bigg]\nonumber\\
	&\;\;\;\;\;\;\;\;\;\;\;\;\;\;\;\;\;\;\;\;\;\;\;\;\;\;-g(v,  e_f){\rm tr}\bigg[c( e_\alpha)c( e_\beta)c( w)c( e_{\hat{f}})c( e_{\hat{\alpha}})c( e_{\hat{\beta}})\bigg]\biggr\}\nonumber\\
	&:=\sum_{f\neq\alpha\neq\beta,\hat{f}\neq\hat{\alpha}\neq\hat{\beta}=1}^{2m}\biggl\{-g(v,  e_{\hat{\beta}}){\mathcal{H}_1}+g(v,  e_{\hat{\alpha}}){\mathcal{H}_2}-g(v,  e_{\hat{f}}){\mathcal{H}_3}+g(v,  w){\mathcal{H}_4}\nonumber\\
	&\;\;\;\;\;\;\;\;\;\;\;\;\;\;\;\;\;\;\;\;\;\;\;\;\;\;-g(v,  e_\beta){\mathcal{H}_5}+g(v,  e_\alpha){\mathcal{H}_6}-g(v,  e_f){\mathcal{H}_7}\biggr\}.\nonumber
\end{align}
According to \eqref{trace6} in lemma \ref{A}, $g(e_{i}, e_{j})=\delta_i^j$, $g(v,e_i)=v_i$ and $ g(w,e_j)=w_j$,  we get
\begin{align}
{\mathcal{H}_1}=&\sum_{f\neq\alpha\neq\beta,\hat{f}\neq\hat{\alpha}=1}^{2m}{\rm tr}\bigg[c( e_f)c( e_\alpha)c( e_\beta)c( w)c( e_{\hat{f}})c( e_{\hat{\alpha}})\bigg]\label{h1}\\
&=\sum_{f\neq\alpha\neq\beta,\hat{f}\neq\hat{\alpha}=1}^{2m}\bigg[-g(e_f,  e_{\hat{\alpha}})g ( e_\alpha,  e_{\hat{f}})g ( e_\beta,  w)+g(e_f,  e_{\hat{\alpha}})g ( e_\alpha,  w)g ( e_\beta,  e_{\hat{f}})-g(e_f,  e_{\hat{\alpha}})g ( e_\alpha,  e_\beta)g ( w,  e_{\hat{f}})\nonumber\\
&\;\;\;\;\;\;\;\;\;\;\;\;\;\;\;\;\;\;\;\;\;\;\;\;\;\;+g(e_f, e_{\hat{f}})g ( e_\alpha,  e_{\hat{\alpha}})g ( e_\beta, w)-g(e_f, e_{\hat{f}})g ( e_\alpha,  w)g ( e_\beta, e_{\hat{\alpha}})+g(e_f, e_{\hat{f}})g ( e_\alpha, e_\beta)g ( w, e_{\hat{\alpha}})\nonumber\\
&\;\;\;\;\;\;\;\;\;\;\;\;\;\;\;\;\;\;\;\;\;\;\;\;\;\;-g(e_f, w)g ( e_\alpha, e_{\hat{\alpha}})g ( e_\beta, e_{\hat{f}})+g(e_f, w)g ( e_\alpha, e_{\hat{f}})g ( e_\beta, e_{\hat{\alpha}})-g(e_f, w)g ( e_\alpha, e_\beta)g ( e_{\hat{f}}, e_{\hat{\alpha}})\nonumber\\
&\;\;\;\;\;\;\;\;\;\;\;\;\;\;\;\;\;\;\;\;\;\;\;\;\;\;+g(e_f, e_\beta)g ( e_\alpha, e_{\hat{\alpha}})g ( w, e_{\hat{f}})-g(e_f, e_\beta)g ( e_\alpha, e_{\hat{f}})g ( w, e_{\hat{\alpha}})+g(e_f, e_\beta)g ( e_\alpha, w)g ( e_{\hat{f}}, e_{\hat{\alpha}})\nonumber\\
&\;\;\;\;\;\;\;\;\;\;\;\;\;\;\;\;\;\;\;\;\;-g(e_f, e_\alpha)g ( e_\beta, e_{\hat{\alpha}})g ( w, e_{\hat{f}})+g(e_f, e_\alpha)g ( e_\beta, e_{\hat{f}})g ( w, e_{\hat{\alpha}})-g(e_f, e_\alpha)g ( e_\beta, w)g ( e_{\hat{f}}, e_{\hat{\alpha}})\bigg]{\rm tr}[id]\nonumber\\
&=\sum_{f\neq\alpha\neq\beta,\hat{f}\neq\hat{\alpha}=1}^{2m}\bigg[-g(e_f,  e_{\hat{\alpha}})g ( e_\alpha,  e_{\hat{f}})g ( e_\beta,  w)+g(e_f,  e_{\hat{\alpha}})g ( e_\alpha,  w)g ( e_\beta,  e_{\hat{f}})\nonumber\\
&\;\;\;\;\;\;\;\;\;\;\;\;\;\;\;\;\;\;\;\;\;\;+g(e_f, e_{\hat{f}})g ( e_\alpha,  e_{\hat{\alpha}})g ( e_\beta, w)-g(e_f, e_{\hat{f}})g ( e_\alpha,  w)g ( e_\beta, e_{\hat{\alpha}})\nonumber\\
&\;\;\;\;\;\;\;\;\;\;\;\;\;\;\;\;\;\;\;\;\;\;-g(e_f, w)g ( e_\alpha, e_{\hat{\alpha}})g ( e_\beta, e_{\hat{f}})+g(e_f, w)g ( e_\alpha, e_{\hat{f}})g ( e_\beta, e_{\hat{\alpha}})\bigg]{\rm tr}[id]\nonumber\\
&=\sum_{f\neq\alpha\neq\beta,\hat{f}\neq\hat{\alpha}=1}^{2m}\bigg[- \delta_{f}^{\hat{\alpha}} \delta_{\hat{f}}^{\alpha} w_{\beta}+ \delta_{f}^{\hat{\alpha}} \delta_{\hat{f}}^{\beta} w_{\alpha}+ \delta_{f}^{\hat{f}} \delta_{\hat{\alpha}}^{\alpha} w_{\beta}- \delta_{f}^{\hat{f}} \delta_{\hat{\alpha}}^{\beta} w_{\alpha}- \delta_{\beta}^{\hat{f}} \delta_{\hat{\alpha}}^{\alpha} w_{f}+ \delta_{\beta}^{\hat{\alpha}} \delta_{\hat{f}}^{\alpha} w_{f}\bigg]{\rm tr}[id],\nonumber\\
	{\mathcal{H}_2}=&\sum_{f\neq\alpha\neq\beta,\hat{f}\neq\hat{\beta}=1}^{2m}{\rm tr}\bigg[c( e_f)c( e_\alpha)c( e_\beta)c( w)c( e_{\hat{f}})c( e_{\hat{\beta}})\bigg]\label{h2}\\
	&=\sum_{f\neq\alpha\neq\beta,\hat{f}\neq\hat{\beta}=1}^{2m}\bigg[ -\delta_{f}^{\hat{\beta}} \delta_{\hat{f}}^{\alpha} w_{\beta}+ \delta_{f}^{\hat{\beta}} \delta_{\hat{f}}^{\beta} w_{\alpha}+ \delta_{f}^{\hat{f}} \delta_{\hat{\beta}}^{\alpha} w_{\beta}- \delta_{f}^{\hat{f}} \delta_{\hat{\beta}}^{\beta} w_{\alpha}- \delta_{\beta}^{\hat{f}} \delta_{\hat{\beta}}^{\alpha} w_{f}+ \delta_{\beta}^{\hat{\beta}} \delta_{\hat{f}}^{\alpha} w_{f}\bigg]{\rm tr}[id],\nonumber\\
	{\mathcal{H}_3}=&\sum_{f\neq\alpha\neq\beta,\hat{\alpha}\neq\hat{\beta}=1}^{2m}{\rm tr}\bigg[c( e_f)c( e_\alpha)c( e_\beta)c( w)c( e_{\hat{\alpha}})c( e_{\hat{\beta}})\bigg]\label{h3}\\
	&=\sum_{f\neq\alpha\neq\beta,\hat{\alpha}\neq\hat{\beta}=1}^{2m}\bigg[ -\delta_{f}^{\hat{\beta}} \delta_{\hat{\alpha}}^{\alpha} w_{\beta}+ \delta_{f}^{\hat{\beta}} \delta_{\hat{\alpha}}^{\beta} w_{\alpha}+ \delta_{f}^{\hat{\alpha}} \delta_{\hat{\beta}}^{\alpha} w_{\beta}- \delta_{f}^{\hat{\alpha}} \delta_{\hat{\beta}}^{\beta} w_{\alpha}- \delta_{\beta}^{\hat{\alpha}} \delta_{\hat{\beta}}^{\alpha} w_{f}+ \delta_{\beta}^{\hat{\beta}} \delta_{\hat{\alpha}}^{\alpha} w_{f}\bigg]{\rm tr}[id],\nonumber
\end{align}

\begin{align}\label{h4}
	{\mathcal{H}_4}=&\sum_{f\neq\alpha\neq\beta,\hat{f}\neq\hat{\alpha}\neq\hat{\beta}=1}^{2m}{\rm tr}\bigg[c( e_f)c( e_\alpha)c( e_\beta)c( e_{\hat{f}})c( e_{\hat{\alpha}})c( e_{\hat{\beta}})\bigg]\\
	&=\sum_{f\neq\alpha\neq\beta,\hat{\alpha}\neq\hat{\beta}=1}^{2m}\bigg[-g(e_f,  e_{\hat{\beta}})g ( e_\alpha,  e_{\hat{\alpha}})g ( e_\beta,  e_{\hat{f}})+g(e_f,  e_{\hat{\beta}})g ( e_\alpha,  e_{\hat{f}})g ( e_\beta,  e_{\hat{\alpha}})-g(e_f,  e_{\hat{\beta}})g ( e_\alpha,  e_\beta)g ( e_{\hat{f}},  e_{\hat{\alpha}})\nonumber\\
	&\;\;\;\;\;\;\;\;\;\;\;\;\;\;\;\;\;\;\;\;\;\;\;\;\;\;+g(e_f, e_{\hat{\alpha}})g ( e_\alpha,  e_{\hat{\beta}})g ( e_\beta, e_{\hat{f}})-g(e_f, e_{\hat{\alpha}})g ( e_\alpha,  e_{\hat{f}})g ( e_\beta, e_{\hat{\beta}})+g(e_f, e_{\hat{\alpha}})g ( e_\alpha, e_\beta)g ( e_{\hat{f}}, e_{\hat{\beta}})\nonumber\\
	&\;\;\;\;\;\;\;\;\;\;\;\;\;\;\;\;\;\;\;\;\;\;\;\;\;\;-g(e_f, e_{\hat{f}})g ( e_\alpha, e_{\hat{\beta}})g ( e_\beta, e_{\hat{\alpha}})+g(e_f, e_{\hat{f}})g ( e_\alpha, e_{\hat{\alpha}})g ( e_\beta, e_{\hat{\beta}})-g(e_f, e_{\hat{f}})g ( e_\alpha, e_\beta)g ( e_{\hat{\alpha}}, e_{\hat{\beta}})\nonumber\\
	&\;\;\;\;\;\;\;\;\;\;\;\;\;\;\;\;\;\;\;\;\;\;\;\;\;\;+g(e_f, e_\beta)g ( e_\alpha, e_{\hat{\beta}})g ( e_{\hat{f}}, e_{\hat{\alpha}})-g(e_f, e_\beta)g ( e_\alpha, e_{\hat{\alpha}})g ( e_{\hat{f}}, e_{\hat{\beta}})+g(e_f, e_\beta)g ( e_\alpha, e_{\hat{f}})g ( e_{\hat{\alpha}}, e_{\hat{\beta}})\nonumber\\
	&\;\;\;\;\;\;\;\;\;\;\;\;\;\;\;\;\;\;\;\;\;-g(e_f, e_\alpha)g ( e_\beta, e_{\hat{\beta}})g ( e_{\hat{f}}, e_{\hat{\alpha}})+g(e_f, e_\alpha)g ( e_\beta, e_{\hat{\alpha}})g ( e_{\hat{f}}, e_{\hat{\beta}})-g(e_f, e_\alpha)g ( e_\beta, e_{\hat{f}})g ( e_{\hat{\alpha}}, e_{\hat{\beta}})\bigg]{\rm tr}[id]\nonumber\\
	&=\sum_{f\neq\alpha\neq\beta,\hat{\alpha}\neq\hat{\beta}=1}^{2m}\bigg[-g(e_f,  e_{\hat{\beta}})g ( e_\alpha,  e_{\hat{\alpha}})g ( e_\beta,  e_{\hat{f}})+g(e_f,  e_{\hat{\beta}})g ( e_\alpha,  e_{\hat{f}})g ( e_\beta,  e_{\hat{\alpha}})\nonumber\\
	&\;\;\;\;\;\;\;\;\;\;\;\;\;\;\;\;\;\;\;\;\;\;+g(e_f, e_{\hat{\alpha}})g ( e_\alpha,  e_{\hat{\beta}})g ( e_\beta, e_{\hat{f}})-g(e_f, e_{\hat{\alpha}})g ( e_\alpha,  e_{\hat{f}})g ( e_\beta, e_{\hat{\beta}})\nonumber\\
	&\;\;\;\;\;\;\;\;\;\;\;\;\;\;\;\;\;\;\;\;\;\;-g(e_f, e_{\hat{f}})g ( e_\alpha, e_{\hat{\beta}})g ( e_\beta, e_{\hat{\alpha}})+g(e_f, e_{\hat{f}})g ( e_\alpha, e_{\hat{\alpha}})g ( e_\beta, e_{\hat{\beta}})\bigg]{\rm tr}[id]\nonumber\\
	&=\sum_{f\neq\alpha\neq\beta,\hat{\alpha}\neq\hat{\beta}=1}^{2m}\bigg[ -\delta_{f}^{\hat{\beta}} \delta_{\hat{\alpha}}^{\alpha} \delta^{\hat{f}}_{\beta}+ \delta_{f}^{\hat{\beta}} \delta_{\hat{\alpha}}^{\beta} \delta^{\hat{f}}_{\alpha}+ \delta_{f}^{\hat{\alpha}} \delta_{\hat{\beta}}^{\alpha} \delta^{\hat{f}}_{\beta}- \delta_{f}^{\hat{\alpha}} \delta_{\hat{\beta}}^{\beta} \delta^{\hat{f}}_{\alpha}- \delta_{\beta}^{\hat{\alpha}} \delta_{\hat{\beta}}^{\alpha} \delta^{\hat{f}}_{f}+ \delta_{\beta}^{\hat{\beta}} \delta_{\hat{\alpha}}^{\alpha} \delta^{\hat{f}}_{f}\bigg]{\rm tr}[id],\nonumber
\end{align}

\begin{align}
	{\mathcal{H}_5}=&\sum_{f\neq\alpha,\hat{f}\neq\hat{\alpha}\neq\hat{\beta}=1}^{2m}{\rm tr}\bigg[c( e_f)c( e_\alpha)c( w)c( e_{\hat{f}})c( e_{\hat{\alpha}})c( e_{\hat{\beta}})\bigg]\label{h5}\\
	&=\sum_{f\neq\alpha,\hat{f}\neq\hat{\alpha}\neq\hat{\beta}=1}^{2m}\bigg[-g(e_f,  e_{\hat{\beta}})g ( e_\alpha,  e_{\hat{\alpha}})g ( w,  e_{\hat{f}})+g(e_f,  e_{\hat{\beta}})g ( e_\alpha,  e_{\hat{f}})g ( w,  e_{\hat{\alpha}})-g(e_f,  e_{\hat{\beta}})g ( e_\alpha,  w)g ( e_{\hat{f}},  e_{\hat{\alpha}})\nonumber\\
	&\;\;\;\;\;\;\;\;\;\;\;\;\;\;\;\;\;\;\;\;\;\;\;\;\;\;+g(e_f, e_{\hat{\alpha}})g ( e_\alpha,  e_{\hat{\beta}})g ( w, e_{\hat{f}})-g(e_f, e_{\hat{\alpha}})g ( e_\alpha,  e_{\hat{f}})g ( w, e_{\hat{\beta}})+g(e_f, e_{\hat{\alpha}})g ( e_\alpha, w)g ( e_{\hat{f}}, e_{\hat{\beta}})\nonumber\\
	&\;\;\;\;\;\;\;\;\;\;\;\;\;\;\;\;\;\;\;\;\;\;\;\;\;\;-g(e_f, e_{\hat{f}})g ( e_\alpha, e_{\hat{\beta}})g ( w, e_{\hat{\alpha}})+g(e_f, e_{\hat{f}})g ( e_\alpha, e_{\hat{\alpha}})g ( w, e_{\hat{\beta}})-g(e_f, e_{\hat{f}})g ( e_\alpha, w)g ( e_{\hat{\alpha}}, e_{\hat{\beta}})\nonumber\\
	&\;\;\;\;\;\;\;\;\;\;\;\;\;\;\;\;\;\;\;\;\;\;\;\;\;\;+g(e_f, w)g ( e_\alpha, e_{\hat{\beta}})g ( e_{\hat{f}}, e_{\hat{\alpha}})-g(e_f, w)g ( e_\alpha, e_{\hat{\alpha}})g ( e_{\hat{f}}, e_{\hat{\beta}})+g(e_f, w)g ( e_\alpha, e_{\hat{f}})g ( e_{\hat{\alpha}}, e_{\hat{\beta}})\nonumber\\
	&\;\;\;\;\;\;\;\;\;\;\;\;\;\;\;\;\;\;\;\;\;-g(e_f, e_\alpha)g ( w, e_{\hat{\beta}})g ( e_{\hat{f}}, e_{\hat{\alpha}})+g(e_f, e_\alpha)g ( w, e_{\hat{\alpha}})g ( e_{\hat{f}}, e_{\hat{\beta}})-g(e_f, e_\alpha)g ( w, e_{\hat{f}})g ( e_{\hat{\alpha}}, e_{\hat{\beta}})\bigg]{\rm tr}[id]\nonumber\\
	&=\sum_{f\neq\alpha,\hat{f}\neq\hat{\alpha}\neq\hat{\beta}=1}^{2m}\bigg[-g(e_f,  e_{\hat{\beta}})g ( e_\alpha,  e_{\hat{\alpha}})g ( w,  e_{\hat{f}})+g(e_f,  e_{\hat{\beta}})g ( e_\alpha,  e_{\hat{f}})g ( w,  e_{\hat{\alpha}})\nonumber\\
	&\;\;\;\;\;\;\;\;\;\;\;\;\;\;\;\;\;\;\;\;\;\;+g(e_f, e_{\hat{\alpha}})g ( e_\alpha,  e_{\hat{\beta}})g ( w, e_{\hat{f}})-g(e_f, e_{\hat{\alpha}})g ( e_\alpha,  e_{\hat{f}})g ( w, e_{\hat{\beta}})\nonumber\\
	&\;\;\;\;\;\;\;\;\;\;\;\;\;\;\;\;\;\;\;\;\;\;-g(e_f, e_{\hat{f}})g ( e_\alpha, e_{\hat{\beta}})g ( w, e_{\hat{\alpha}})+g(e_f, e_{\hat{f}})g ( e_\alpha, e_{\hat{\alpha}})g ( w, e_{\hat{\beta}})\bigg]{\rm tr}[id]\nonumber\\
	&=\sum_{f\neq\alpha,\hat{f}\neq\hat{\alpha}\neq\hat{\beta}=1}^{2m}\bigg[ -\delta_{f}^{\hat{\beta}} \delta_{\hat{\alpha}}^{\alpha} w_{\hat{f}}+ \delta_{f}^{\hat{\beta}} \delta^{\hat{f}}_{\alpha} w_{\hat{\alpha}}+ \delta_{f}^{\hat{\alpha}} \delta_{\hat{\beta}}^{\alpha} w_{\hat{f}}- \delta_{f}^{\hat{\alpha}}  \delta^{\hat{f}}_{\alpha}w_{\hat{\beta}}-  \delta_{\hat{\beta}}^{\alpha} \delta^{\hat{f}}_{f}w_{\hat{\alpha}}+  \delta_{\hat{\alpha}}^{\alpha} \delta^{\hat{f}}_{f}w_{\hat{\beta}}\bigg]{\rm tr}[id],\nonumber\\
	{\mathcal{H}_6}=&\sum_{f\neq\beta,\hat{f}\neq\hat{\alpha}\neq\hat{\beta}=1}^{2m}{\rm tr}\bigg[c( e_f)c( e_\beta)c( w)c( e_{\hat{f}})c( e_{\hat{\alpha}})c( e_{\hat{\beta}})\bigg]\label{h6}\\
	&=\sum_{f\neq\beta,\hat{f}\neq\hat{\alpha}\neq\hat{\beta}=1}^{2m}\bigg[ -\delta_{f}^{\hat{\beta}} \delta_{\hat{\alpha}}^{\beta} w_{\hat{f}}+ \delta_{f}^{\hat{\beta}} \delta^{\hat{f}}_{\beta} w_{\hat{\alpha}}+ \delta_{f}^{\hat{\alpha}} \delta_{\hat{\beta}}^{\beta} w_{\hat{f}}- \delta_{f}^{\hat{\alpha}}  \delta^{\hat{f}}_{\beta}w_{\hat{\beta}}-  \delta_{\hat{\beta}}^{\beta} \delta^{\hat{f}}_{f}w_{\hat{\alpha}}+  \delta_{\hat{\alpha}}^{\beta} \delta^{\hat{f}}_{f}w_{\hat{\beta}}\bigg]{\rm tr}[id],\nonumber\\
	{\mathcal{H}_7}=&\sum_{\alpha\neq\beta,\hat{f}\neq\hat{\alpha}\neq\hat{\beta}=1}^{2m}{\rm tr}\bigg[c( e_\alpha)c( e_\beta)c( w)c( e_{\hat{f}})c( e_{\hat{\alpha}})c( e_{\hat{\beta}})\bigg]\label{h7}\\
	&=\sum_{\alpha\neq\beta,\hat{f}\neq\hat{\alpha}\neq\hat{\beta}=1}^{2m}\bigg[ -\delta_{\alpha}^{\hat{\beta}} \delta_{\hat{\alpha}}^{\beta} w_{\hat{f}}+ \delta_{\alpha}^{\hat{\beta}} \delta^{\hat{f}}_{\beta} w_{\hat{\alpha}}+ \delta_{\alpha}^{\hat{\alpha}} \delta_{\hat{\beta}}^{\beta} w_{\hat{f}}- \delta_{\alpha}^{\hat{\alpha}}  \delta^{\hat{f}}_{\beta}w_{\hat{\beta}}-  \delta_{\hat{\beta}}^{\beta} \delta^{\hat{f}}_{\alpha}w_{\hat{\alpha}}+  \delta_{\hat{\alpha}}^{\beta} \delta^{\hat{f}}_{\alpha}w_{\hat{\beta}}\bigg]{\rm tr}[id],\nonumber
\end{align}

Bringing \eqref{h1}...\eqref{h7} into \eqref{At1} yields
\begin{align}\label{At1a}
	&{\rm tr} \bigg(\sum_{f\neq\alpha\neq\beta,\hat{f}\neq\hat{\alpha}\neq\hat{\beta}=1}^{2m}c(v)c(e_{f})c(e_{\alpha})c(e_{\beta})c(w)c(e_{\hat{f}})c(e_{\hat{\alpha}})c(e_{\hat{\beta}})\bigg)\\
	&=\sum_{f\neq\alpha\neq\beta,\hat{f}\neq\hat{\alpha}\neq\hat{\beta}=1}^{2m}\bigg[v_f \delta_{\alpha}^{\hat{\beta}} \delta_{\hat{\alpha}}^{\beta} w_{\hat{f}}-v_f \delta_{\alpha}^{\hat{\beta}} \delta_{\hat{f}}^{\beta} w_{\hat{\alpha}}-v_f \delta_{\alpha}^{\hat{\alpha}} \delta_{\hat{\beta}}^{\beta} w_{\hat{f}}+v_f \delta_{\alpha}^{\hat{\alpha}} \delta_{\hat{f}}^{\beta} w_{\hat{\beta}}+v_f \delta_{\alpha}^{\hat{f}} \delta_{\hat{\beta}}^{\beta} w_{\hat{\alpha}}-v_f \delta_{\alpha}^{\hat{f}} \delta_{\hat{\alpha}}^{\beta} w_{\hat{\beta}}\nonumber\\
	&\;\;\;\;\;\;\;\;\;\;\;\;\;\;\;\;\;\;\;\;\;\;\;\;\;\;\;\;-v_{\alpha} \delta_{f}^{\hat{\beta}} \delta_{\hat{\alpha}}^{\beta} w_{\hat{f}}+v_{\alpha} \delta_{f}^{\hat{\beta}} \delta_{\hat{f}}^{\beta} w_{\hat{\alpha}}+v_{\alpha} \delta_{f}^{\hat{\alpha}} \delta_{\hat{\beta}}^{\beta} w_{\hat{f}}-v_{\alpha} \delta_{f}^{\hat{\alpha}} \delta_{\hat{f}}^{\beta} w_{\hat{\beta}}-v_{\alpha} \delta_{f}^{\hat{f}} \delta_{\hat{\beta}}^{\beta} w_{\hat{\alpha}}+v_{\alpha} \delta_{f}^{\hat{f}} \delta_{\hat{\alpha}}^{\beta} w_{\hat{\beta}}\nonumber\\
	&\;\;\;\;\;\;\;\;\;\;\;\;\;\;\;\;\;\;\;\;\;\;\;\;\;\;\;\;+v_{\beta} \delta_{f}^{\hat{\beta}} \delta_{\hat{\alpha}}^{\alpha} w_{\hat{f}}-v_{\beta} \delta_{f}^{\hat{\beta}} \delta_{\hat{f}}^{\alpha} w_{\hat{\alpha}}-v_{\beta} \delta_{f}^{\hat{\alpha}} \delta_{\hat{\beta}}^{\alpha} w_{\hat{f}}+v_{\beta} \delta_{f}^{\hat{\alpha}} \delta_{\hat{f}}^{\alpha} w_{\hat{\beta}}+v_{\beta} \delta_{f}^{\hat{f}} \delta_{\hat{\beta}}^{\alpha} w_{\hat{\alpha}}-v_{\beta} \delta_{f}^{\hat{f}} \delta_{\hat{\alpha}}^{\alpha} w_{\hat{\beta}}\nonumber\\
	&\;\;\;\;\;\;\;\;\;\;\;\;\;\;\;\;\;\;\;\;\;\;\;\;\;\;\;\;+v_{\hat{f}} \delta_{f}^{\hat{\beta}} \delta_{\hat{\alpha}}^{\alpha} w_{\beta}-v_{\hat{f}} \delta_{f}^{\hat{\beta}} \delta_{\hat{\alpha}}^{\beta} w_{\alpha}-v_{\hat{f}} \delta_{f}^{\hat{\alpha}} \delta_{\hat{\beta}}^{\alpha} w_{\beta}+v_{\hat{f}} \delta_{f}^{\hat{\alpha}} \delta_{\hat{\beta}}^{\beta} w_{\alpha}+v_{\hat{f}} \delta_{\beta}^{\hat{\alpha}} \delta_{\hat{\beta}}^{\alpha} w_{f}-v_{\hat{f}} \delta_{\beta}^{\hat{\beta}} \delta_{\hat{\alpha}}^{\alpha} w_{f}\nonumber\\
	&\;\;\;\;\;\;\;\;\;\;\;\;\;\;\;\;\;\;\;\;\;\;\;\;\;\;\;\;-v_{\hat{\alpha}} \delta_{f}^{\hat{\beta}} \delta_{\hat{f}}^{\alpha} w_{\beta}+v_{\hat{\alpha}} \delta_{f}^{\hat{\beta}} \delta_{\hat{f}}^{\beta} w_{\alpha}+v_{\hat{\alpha}} \delta_{f}^{\hat{f}} \delta_{\hat{\beta}}^{\alpha} w_{\beta}-v_{\hat{\alpha}} \delta_{f}^{\hat{f}} \delta_{\hat{\beta}}^{\beta} w_{\alpha}-v_{\hat{\alpha}} \delta_{\beta}^{\hat{f}} \delta_{\hat{\beta}}^{\alpha} w_{f}+v_{\hat{\alpha}} \delta_{\beta}^{\hat{\beta}} \delta_{\hat{f}}^{\alpha} w_{f}\nonumber\\
	&\;\;\;\;\;\;\;\;\;\;\;\;\;\;\;\;\;\;\;\;\;\;\;\;\;\;\;\;+v_{\hat{\beta
	}} \delta_{f}^{\hat{\alpha}} \delta_{\hat{f}}^{\alpha} w_{\beta}-v_{\hat{\beta}} \delta_{f}^{\hat{\alpha}} \delta_{\hat{f}}^{\beta} w_{\alpha}-v_{\hat{\beta}} \delta_{f}^{\hat{f}} \delta_{\hat{\alpha}}^{\alpha} w_{\beta}+v_{\hat{\beta}} \delta_{f}^{\hat{f}} \delta_{\hat{\alpha}}^{\beta} w_{\alpha}+v_{\hat{\beta}} \delta_{\beta}^{\hat{f}} \delta_{\hat{\alpha}}^{\alpha} w_{f}-v_{\hat{\beta}} \delta_{\beta}^{\hat{\alpha}} \delta_{\hat{f}}^{\alpha} w_{f}\nonumber\\
	&\;\;\;\;\;\;\;\;\;\;\;- \delta_{f}^{\hat{\beta}} \delta_{\hat{\alpha}}^{\alpha}\delta_{\hat{f}}^{\beta}g(v,w) +\delta_{f}^{\hat{\beta}} \delta_{\hat{f}}^{\alpha}\delta_{\hat{\alpha}}^{\beta} g(v,w)+ \delta_{f}^{\hat{\alpha}} \delta_{\hat{\beta}}^{\alpha}\delta_{\hat{f}}^{\beta} g(v,w)-\delta_{f}^{\hat{\alpha}}\delta_{\hat{f}}^{\alpha} \delta_{\hat{\beta}}^{\beta} g(v,w)-\delta_{f}^{\hat{f}}\delta_{\hat{\beta}}^{\alpha} \delta_{\hat{\alpha}}^{\beta} g(v,w)+\delta_{f}^{\hat{f}} \delta_{\hat{\alpha}}^{\alpha}\delta_{\hat{\beta}}^{\beta} g(v,w)\bigg]{\rm tr}[id].\nonumber
\end{align}
Therefore, \eqref{t1} holds.
\end{proof}

\noindent{\bf (II-1-$\mathbb{A}$)}\eqref{t2}:
\begin{align}
	&{\rm tr}\bigg(\sum_{j\neq p,t\neq s=1}^{2m}c(v)c(e_{j})c(w)c(e_p)c(e_s)c(e_t)\bigg)\nonumber\\
	&=\sum_{j\neq p,t\neq s=1}^{2m}\bigg[-v_t w_p \delta_{j}^{s}-v_t w_j \delta_{p}^{s}+v_s w_p \delta_{j}^{t}+v_s w_j \delta_{p}^{t}-v_p w_s \delta_{j}^{t}+v_p w_t \delta_{j}^{s}\nonumber\\
	&\;\;\;\;\;\;\;\;\;\;\;\;\;\;\;\;\;-v_j w_t \delta_{p}^{s}+v_j w_s \delta_{p}^{t}+\delta_{j}^{t}\delta_{p}^{s}g(v,w)-\delta_{j}^{s}\delta_{p}^{t}g(v,w)\bigg]{\rm tr}[id].\nonumber
\end{align}

\begin{proof}

According to \eqref{trace6} in lemma \ref{A}, $g(e_{i}, e_{j})=\delta_i^j$, $g(v,e_i)=v_i$ and $ g(w,e_j)=w_j$,  we get
\begin{align}\label{At2}
	&{\rm tr}\bigg(\sum_{j\neq p,t\neq s=1}^{2m}c(v)c(e_{j})c(w)c(e_p)c(e_s)c(e_t)\bigg)\\
	&=\sum_{j\neq p,t\neq s=1}^{2m}\bigg[-g(v,  e_t)g ( e_j,  e_s)g ( w,  e_p)+g(v,  e_t)g ( e_j,  e_p)g ( w,  e_s)-g(v,  e_t)g ( e_j,  w)g ( e_p,  e_s)\nonumber\\
	&\;\;\;\;\;\;\;\;\;\;\;\;\;\;\;\;\;\;\;\;+g(v, e_s)g ( e_j,  e_t)g ( w, e_p)-g(v, e_s)g ( e_j,  e_p)g ( w, e_t)+g(v, e_s)g ( e_j, w)g ( e_p, e_t)\nonumber\\
	&\;\;\;\;\;\;\;\;\;\;\;\;\;\;\;\;\;\;\;\;-g(v, e_p)g ( e_j, e_t)g ( w, e_s)+g(v, e_p)g ( e_j, e_s)g ( w, e_t)-g(v, e_p)g ( e_j, w)g ( e_s, e_t)\nonumber\\
	&\;\;\;\;\;\;\;\;\;\;\;\;\;\;\;\;\;\;\;\;+g(v, w)g ( e_j, e_t)g ( e_p, e_s)-g(v, w)g ( e_j, e_s)g ( e_p, e_t)+g(v, w)g ( e_j, e_p)g ( e_s, e_t)\nonumber\\
	&\;\;\;\;\;\;\;\;\;\;\;\;\;\;\;\;\;\;\;\;-g(v, e_j)g ( w, e_t)g ( e_p, e_s)+g(v, e_j)g ( w, e_s)g ( e_p, e_t)-g(v, e_j)g ( w, e_p)g ( e_s, e_t)\bigg]{\rm tr}[id]\nonumber\\
	&=\sum_{j\neq p,t\neq s=1}^{2m}\bigg[-v_t w_p \delta_{j}^{s}-v_t w_j \delta_{p}^{s}+v_s w_p \delta_{j}^{t}+v_s w_j \delta_{p}^{t}-v_p w_s \delta_{j}^{t}+v_p w_t \delta_{j}^{s}\nonumber\\
	&\;\;\;\;\;\;\;\;\;\;\;\;\;\;\;\;\;-v_j w_t \delta_{p}^{s}+v_j w_s \delta_{p}^{t}+\delta_{j}^{t}\delta_{p}^{s}g(v,w)-\delta_{j}^{s}\delta_{p}^{t}g(v,w)\bigg]{\rm tr}[id],\nonumber
\end{align}

Therefore, \eqref{t2} holds.
\end{proof}

\noindent{\bf (II-1-$\mathbb{A}$)}\eqref{t2s}:

\begin{align}
	&\int_{\|\xi\|=1}\operatorname{tr}\biggl\{ \frac{1}{8}\|\xi\|^{-2m}\sum_{j,p,t,s=1}^{2m}  {\operatorname{R}}_{jpts}c(v)c(e_j)c(w)c(e_p)c(e_s)c(e_t) \biggr\}(x_0)\sigma(\xi)\nonumber\\
	&=\bigg(\frac{1}{4} s g(v,w)-\frac{1}{2}{\rm Ric}(v,w)\bigg){\rm tr}[id]{\rm Vol}(S^{n-1}).\nonumber
\end{align}

\begin{proof}
	Let ${\operatorname{R}}_{jpts}:={\operatorname{R}}(e_j,e_p,e_t,e_s),s:={\operatorname{R}}(e_j,e_p,e_j,e_p)$, and according to \eqref{t2} we get
	
\begin{align}\label{at2s}
	&\int_{\|\xi\|=1}\operatorname{tr}\biggl\{ \frac{1}{8}\|\xi\|^{-2m}\sum_{j,p,t,s=1}^{2m}  {\operatorname{R}}_{jpts}c(v)c(e_j)c(w)c(e_p)c(e_s)c(e_t) \biggr\}(x_0)\sigma(\xi)\\
	&=\frac{1}{8}\sum_{j,p,t,s=1}^{2m}{\operatorname{R}}(e_j,e_p,e_t,e_s)\bigg[-v_t w_p \delta_{j}^{s}-v_t w_j \delta_{p}^{s}+v_s w_p \delta_{j}^{t}+v_s w_j \delta_{p}^{t}-v_p w_s \delta_{j}^{t}+v_p w_t \delta_{j}^{s}\nonumber\\
	&\;\;\;\;\;\;\;\;\;\;\;\;\;\;\;\;\;-v_j w_t \delta_{p}^{s}+v_j w_s \delta_{p}^{t}+\delta_{j}^{t}\delta_{p}^{s}g(v,w)-\delta_{j}^{s}\delta_{p}^{t}g(v,w)\bigg]{\rm tr}[id]{\rm Vol}(S^{n-1})\nonumber\\
	&=\frac{1}{8}\sum_{j,p=1}^{2m}\bigg[-{\operatorname{R}}(e_j,w,v,e_j)-{\operatorname{R}}(w,e_p,v,e_p)+{\operatorname{R}}(e_j,w,e_j,v)+{\operatorname{R}}(e_j,v,w,e_j)\nonumber\\
	&\;\;\;\;\;\;\;\;\;\;\;\;\;\;\;\;-{\operatorname{R}}(e_j,v,e_j,w)+{\operatorname{R}}(e_j,v,w,e_j)-{\operatorname{R}}(v,e_p,w,e_p)+{\operatorname{R}}(v,e_p,e_p,w)\nonumber\\
	&\;\;\;\;\;\;\;\;\;\;\;\;\;\;\;\;+{\operatorname{R}}(e_j,e_p,e_j,e_p)g(v,w)-{\operatorname{R}}(e_j,e_p,e_p,e_j)g(v,w)\bigg]{\rm tr}[id]{\rm Vol}(S^{n-1})\nonumber\\
	&=\frac{1}{8}\sum_{j,p=1}^{2m}\bigg[2{\operatorname{R}}(e_j,e_p,e_j,e_p)g(v,w)-4{\operatorname{R}}(e_j,w,e_j,v)\bigg]{\rm tr}[id]{\rm Vol}(S^{n-1})\nonumber\\
	&=\bigg(\frac{1}{4} s g(v,w)-\frac{1}{2}{\rm Ric}(v,w)\bigg){\rm tr}[id]{\rm Vol}(S^{n-1}).\nonumber
\end{align}
	Therefore, \eqref{t2s} holds.
\end{proof}

\noindent{\bf (II-2-$\mathbb{A}$)}\eqref{t5}:

\begin{align}
	&{\rm tr}\bigg(\sum_{f\neq{\alpha}\neq{\beta},k\neq l \neq \eta=1}^{2m}c(v)c(e_{\eta})c(w)c(e_f)c(e_{\alpha})c(e_{\beta})c(e_k)c(e_l)+c(v)c(e_f)c(e_{\alpha})c(e_{\beta})c(w)c(e_{\eta})c(e_k)c(e_l)\bigg)\nonumber\\
	&=\sum_{f\neq{\alpha}\neq{\beta},k\neq l \neq \eta=1}^{2m}\biggl\{v_{\beta}\delta_{f}^{l} \delta_{k}^{\alpha} w_{\eta}-v_{\beta}\delta_{f}^{l} \delta_{\eta}^{\alpha} w_{k}-v_{\beta}\delta_{f}^{k} \delta_{l}^{\alpha} w_{\eta}+v_{\beta}\delta_{f}^{k} \delta_{\eta}^{\alpha} w_{l}+v_{\beta}\delta_{f}^{\eta} \delta_{l}^{\alpha} w_{k}-v_{\beta}\delta_{f}^{\eta} \delta_{k}^{\alpha} w_{l}\nonumber\\
	&\;\;\;\;\;\;\;\;\;\;\;\;\;\;\;\;\;\;\;\;\;\;\;-v_{\alpha}\delta_{f}^{l} \delta_{k}^{\beta} w_{\eta}+v_{\alpha}\delta_{f}^{l} \delta_{\eta}^{\beta} w_{k}+v_{\alpha}\delta_{f}^{k} \delta_{l}^{\beta} w_{\eta}-v_{\alpha}\delta_{f}^{k} \delta_{\eta}^{\beta} w_{l}-v_{\alpha}\delta_{f}^{\eta} \delta_{l}^{\beta} w_{k}+v_{\alpha}\delta_{f}^{\eta} \delta_{k}^{\beta} w_{l}\nonumber\\
	&\;\;\;\;\;\;\;\;\;\;\;\;\;\;\;\;\;\;\;\;\;\;\;+v_{f}\delta_{\alpha}^{l} \delta_{k}^{\beta} w_{\eta}-v_{f}\delta_{\alpha}^{l} \delta_{\eta}^{\beta} w_{k}-v_{f}\delta_{\alpha}^{k} \delta_{l}^{\beta} w_{\eta}+v_{f}\delta_{\alpha}^{k} \delta_{\eta}^{\beta} w_{l}+v_{f}\delta_{\alpha}^{\eta} \delta_{l}^{\beta} w_{k}-v_{f}\delta_{\alpha}^{\eta} \delta_{k}^{\beta} w_{l}\nonumber\\
	&\;\;\;\;\;\;\;\;\;\;\;\;\;\;\;\;\;\;\;\;\;\;\;+v_{\eta}\delta_{\alpha}^{l} \delta_{k}^{\beta} w_{f}-v_{\eta}\delta_{\alpha}^{l} \delta_{f}^{k} w_{\beta}-v_{\eta}\delta_{\alpha}^{k} \delta_{l}^{\beta} w_{f}+v_{\eta}\delta_{\alpha}^{k} \delta_{f}^{l} w_{\beta}+v_{\eta} \delta_{l}^{\beta} \delta_{f}^{k}w_{\alpha}-v_{\eta} \delta_{k}^{\beta} \delta_{f}^{l}w_{\alpha}\nonumber\\
	&\;\;\;\;\;\;\;\;\;\;\;\;\;\;\;\;\;\;\;\;\;\;\;\bigg[-\delta_{\alpha}^{l} \delta_{k}^{\beta} \delta_{f}^{\eta}+\delta_{\alpha}^{l} \delta_{\eta}^{\beta} \delta_{f}^{k}+\delta_{\alpha}^{k} \delta_{l}^{\beta} \delta_{f}^{\eta}-\delta_{\alpha}^{k} \delta_{\eta}^{\beta} \delta_{f}^{l}-\delta_{\alpha}^{\eta} \delta_{l}^{\beta} \delta_{f}^{k}+\delta_{\alpha}^{\eta} \delta_{k}^{\beta} \delta_{f}^{l}\bigg]g(v,w)\biggr\}{\rm tr}[id].\nonumber
\end{align}

\begin{proof}

According to \eqref{trace8} in lemma \ref{A}, $g(e_{i}, e_{j})=\delta_i^j$, $g(v,e_i)=v_i$ and $ g(w,e_j)=w_j$,  we get
\begin{align}\label{At5}
	&{\rm tr}\bigg(\sum_{f\neq{\alpha}\neq{\beta},k\neq l \neq \eta=1}^{2m}c(v)c(e_{\eta})c(w)c(e_f)c(e_{\alpha})c(e_{\beta})c(e_k)c(e_l)+c(v)c(e_f)c(e_{\alpha})c(e_{\beta})c(w)c(e_{\eta})c(e_k)c(e_l)\bigg)\\
    &={\rm tr}\bigg(\sum_{f\neq{\alpha}\neq{\beta},k\neq l \neq \eta=1}^{2m}c(v)c(e_{\eta})c(w)c(e_f)c(e_{\alpha})c(e_{\beta})c(e_k)c(e_l)\bigg)\nonumber\\
    &\;\;\;\;+{\rm tr}\bigg(\sum_{f\neq{\alpha}\neq{\beta},k\neq l \neq \eta=1}^{2m}c(v)c(e_f)c(e_{\alpha})c(e_{\beta})c(w)c(e_{\eta})c(e_k)c(e_l)\bigg)\nonumber\\
    &=\sum_{f\neq{\alpha}\neq{\beta},k\neq l \neq \eta=1}^{2m}\biggl\{-g(v,  e_l){\rm tr}\bigg[c( {e_\eta})c( w)c( e_f)c( e_{\alpha})c( e_{\beta})c( e_k)\bigg]+g(v,  e_k){\rm tr}\bigg[c( {e_\eta})c( w)c( e_f)c( e_{\alpha})c( e_{\beta})c( e_l)\bigg]\nonumber\\
    &\;\;\;\;\;\;\;\;\;\;\;\;\;\;\;\;\;\;\;\;-g(v,  e_{\beta}){\rm tr}\bigg[c( {e_\eta})c( w)c( e_f)c( e_{\alpha})c( e_k)c( e_l)\bigg]+g(v,  e_{\alpha}){\rm tr}\bigg[c( {e_\eta})c( w)c( e_f)c( e_{\beta})c( e_k)c( e_l)\bigg]\nonumber\\
    &\;\;\;\;\;\;\;\;\;\;\;\;\;\;\;\;\;\;\;\;-g(v,  e_f){\rm tr}\bigg[c( {e_\eta})c( w)c( e_{\alpha})c( e_{\beta})c( e_k)c( e_l)\bigg]+g(v,  w){\rm tr}\bigg[c( {e_\eta})c( e_f)c( e_{\alpha})c( e_{\beta})c( e_k)c( e_l)\bigg]\nonumber\\
    &\;\;\;\;\;\;\;\;\;\;\;\;\;\;\;\;\;\;\;\;-g(v,  {e_\eta}){\rm tr}\bigg[c( w)c( e_f)c( e_{\alpha})c( e_{\beta})c( e_k)c( e_l)\bigg]\biggr\}\nonumber \\
    &+\sum_{f\neq{\alpha}\neq{\beta},k\neq l \neq \eta=1}^{2m}\biggl\{-g(v,  e_l){\rm tr}\bigg[c( e_f)c( e_{\alpha})c( e_{\beta})c( w)c( e_{\eta})c( e_k)\bigg]+g(v,  e_k){\rm tr}\bigg[c( e_f)c( e_{\alpha})c( e_{\beta})c( w)c( e_{\eta})c( e_l)\bigg]\nonumber\\
    &\;\;\;\;\;\;\;\;\;\;\;\;\;\;\;\;\;\;\;\;-g(v,  e_{\eta}){\rm tr}\bigg[c( e_f)c( e_{\alpha})c( e_{\beta})c( w)c( e_k)c( e_l)\bigg]+g(v,  w){\rm tr}\bigg[c( e_f)c( e_{\alpha})c( e_{\beta})c( e_{\eta})c( e_k)c( e_l)\bigg]\nonumber\\
    &\;\;\;\;\;\;\;\;\;\;\;\;\;\;\;\;\;\;\;\;-g(v,  e_{\beta}){\rm tr}\bigg[c( e_f)c( e_{\alpha})c( w)c( e_{\eta})c( e_k)c( e_l)\bigg]+g(v,  e_{\alpha}){\rm tr}\bigg[c( e_f)c( e_{\beta})c( w)c( e_{\eta})c( e_k)c( e_l)\bigg]\nonumber\\
    &\;\;\;\;\;\;\;\;\;\;\;\;\;\;\;\;\;\;\;\;-g(v,  e_f){\rm tr}\bigg[c( e_{\alpha})c( e_{\beta})c( w)c( e_{\eta})c( e_k)c( e_l)\bigg]\biggr\}\nonumber\\
    &:=\sum_{f\neq{\alpha}\neq{\beta},k\neq l \neq \eta=1}^{2m}\biggl\{-g(v,  e_l){\mathcal{K}_1}+g(v,  e_k){\rm tr}{\mathcal{K}_2}-g(v,  e_{\beta}){\mathcal{K}_3}+g(v,  e_{\alpha}){\mathcal{K}_4}-g(v,  e_f){\mathcal{K}_5}+g(v,  w){\mathcal{K}_6}-g(v,  {e_\eta}){\mathcal{K}_7}\biggr\}\nonumber \\
    &+\sum_{f\neq{\alpha}\neq{\beta},k\neq l \neq \eta=1}^{2m}\biggl\{-g(v,  e_l){\mathcal{K}_8}+g(v,  e_k){\mathcal{K}_9}-g(v,  e_{\eta}){\mathcal{K}_{10}}+g(v,  w){\mathcal{K}_{11}}-g(v,  e_{\beta}){\mathcal{K}_{12}}+g(v,  e_{\alpha}){\mathcal{K}_{13}}-g(v,  e_f){\mathcal{K}_{14}}\biggr\}.\nonumber
\end{align}

According to ${\rm tr}\mathcal{XY}={\rm tr}\mathcal{YX}$, $c(e_{i})c(e_{j})+c(e_{j})c(e_{i})=-2g(e_{i}, e_{j})=-2\delta_i^j$ and \eqref{trace4} in lemma \ref{A}, we get
\begin{align}
	{\mathcal{K}_{1}}&=\sum_{f\neq{\alpha}\neq{\beta},k\neq \eta=1}^{2m}{\rm tr}\bigg[c( {e_\eta})c( w)c( e_f)c( e_{\alpha})c( e_{\beta})c( e_k)\bigg]\label{k1}\\	
	&=\sum_{f\neq{\alpha}\neq{\beta},k\neq \eta=1}^{2m}{\rm tr}\bigg[c( e_f)c( e_{\alpha})c( e_{\beta})c( e_k)c( {e_\eta})c( w)\bigg]\nonumber\\
	&=-\sum_{f\neq{\alpha}\neq{\beta},k\neq  \eta=1}^{2m}{\rm tr}\bigg[c( e_f)c( e_{\alpha})c( e_{\beta})c( e_k)c( w)c( {e_\eta})\bigg]-2g(w,e_\eta)\sum_{f\neq{\alpha}\neq{\beta},k=1}^{2m}{\rm tr}\bigg[c( e_f)c( e_{\alpha})c( e_{\beta})c( e_k)\bigg]\nonumber\\
	&=-\sum_{f\neq{\alpha}\neq{\beta},k\neq  \eta=1}^{2m}{\rm tr}\bigg[c( e_f)c( e_{\alpha})c( e_{\beta})c( e_k)c( w)c( {e_\eta})\bigg]\nonumber\\
	&=\sum_{f\neq{\alpha}\neq{\beta},k\neq  \eta=1}^{2m}{\rm tr}\bigg[c( e_f)c( e_{\alpha})c( e_{\beta})c( w)c( e_k)c( {e_\eta})\bigg]+2g(w,e_k)\sum_{f\neq{\alpha}\neq{\beta},\eta=1}^{2m}{\rm tr}\bigg[c( e_f)c( e_{\alpha})c( e_{\beta})c( e_\eta)\bigg]\nonumber\\
	&=\sum_{f\neq{\alpha}\neq{\beta},k\neq  \eta=1}^{2m}{\rm tr}\bigg[c( e_f)c( e_{\alpha})c( e_{\beta})c( w)c( e_k)c( {e_\eta})\bigg]\nonumber\\
	&=-\sum_{f\neq{\alpha}\neq{\beta},k\neq  \eta=1}^{2m}{\rm tr}\bigg[c( e_f)c( e_{\alpha})c( e_{\beta})c( w)c( {e_\eta})c( e_k)\bigg]=-{\mathcal{K}_{8}}.\nonumber
\end{align}
Hence obtaining ${\mathcal{K}_{1}}=-{\mathcal{K}_{8}}$, similar calculations yield
\begin{align} &{\mathcal{K}_{2}}=-{\mathcal{K}_{9}},\nonumber\\ &{\mathcal{K}_{3}}={\mathcal{K}_{10}},\nonumber\\  &{\mathcal{K}_{4}}={\mathcal{K}_{13}},\nonumber\\  &{\mathcal{K}_{5}}={\mathcal{K}_{14}},\nonumber\\  &{\mathcal{K}_{6}}={\mathcal{K}_{11}},\nonumber\\  &{\mathcal{K}_{7}}={\mathcal{K}_{10}}.\nonumber
\end{align}

According to \eqref{trace6} in lemma \ref{A}, $g(e_{i}, e_{j})=\delta_i^j$, $g(v,e_i)=v_i$ and $ g(w,e_j)=w_j$,  we get
\begin{align}\label{k3} 
	{\mathcal{K}_{3}}=&\sum_{f\neq{\alpha},k\neq l \neq  \eta=1}^{2m}{\rm tr}\bigg[c( {e_\eta})c( w)c( e_f)c( e_{\alpha})c( e_k)c( e_l)\bigg]\\
	&=\sum_{f\neq{\alpha},k\neq l \neq  \eta=1}^{2m}\bigg[-g(e_{\eta},  e_l)g ( w,  e_k)g ( e_f,  e_{\alpha})+g(e_{\eta},  e_l)g ( w,  e_{\alpha})g ( e_f,  e_k)-g(e_{\eta},  e_l)g ( w,  e_f)g ( e_{\alpha},  e_k)\nonumber\\
	&\;\;\;\;\;\;\;\;\;\;\;\;\;\;\;\;\;\;\;\;+g(e_{\eta}, e_k)g ( w,  e_l)g ( e_f, e_{\alpha})-g(e_{\eta}, e_k)g ( w,  e_{\alpha})g ( e_f, e_l)+g(e_{\eta}, e_k)g ( w, e_f)g ( e_{\alpha}, e_l)\nonumber\\
	&\;\;\;\;\;\;\;\;\;\;\;\;\;\;\;\;\;\;\;\;-g(e_{\eta}, e_{\alpha})g ( w, e_l)g ( e_f, e_k)+g(e_{\eta}, e_{\alpha})g ( w, e_k)g ( e_f, e_l)-g(e_{\eta}, e_{\alpha})g ( w, e_f)g ( e_k, e_l)\nonumber\\
	&\;\;\;\;\;\;\;\;\;\;\;\;\;\;\;\;\;\;\;\;+g(e_{\eta}, e_f)g ( w, e_l)g ( e_{\alpha}, e_k)-g(e_{\eta}, e_f)g ( w, e_k)g ( e_{\alpha}, e_l)+g(e_{\eta}, e_f)g ( w, e_{\alpha})g ( e_k, e_l)\nonumber\\
	&\;\;\;\;\;\;\;\;\;\;\;\;\;\;\;\;\;\;\;\;-g(e_{\eta}, w)g ( e_f, e_l)g ( e_{\alpha}, e_k)+g(e_{\eta}, w)g ( e_f, e_k)g ( e_{\alpha}, e_l)-g(e_{\eta}, w)g ( e_f, e_{\alpha})g ( e_k, e_l)\bigg]{\rm tr}[id]\nonumber\\
	&=\sum_{f\neq{\alpha},k\neq l \neq  \eta=1}^{2m}\bigg[-\delta_{f}^{l} \delta_{k}^{\alpha} w_{\eta}+\delta_{f}^{l} \delta_{\eta}^{\alpha} w_{k}+\delta_{f}^{k} \delta_{l}^{\alpha} w_{\eta}-\delta_{f}^{k} \delta_{\eta}^{\alpha} w_{l}-\delta_{f}^{\eta} \delta_{l}^{\alpha} w_{k}+\delta_{f}^{\eta} \delta_{k}^{\alpha} w_{l}\bigg]{\rm tr}[id].\nonumber
\end{align}

The same reasoning can be used to obtain
\begin{align} \label{k4}
	{\mathcal{K}_{4}}=&\sum_{f\neq{\beta},k\neq l \neq  \eta=1}^{2m}{\rm tr}\bigg[c( {e_\eta})c( w)c( e_f)c( e_{\beta})c( e_k)c( e_l)\bigg]\\
	&=\sum_{f\neq{\beta},k\neq l \neq  \eta=1}^{2m}\bigg[-\delta_{f}^{l} \delta_{k}^{\beta} w_{\eta}+\delta_{f}^{l} \delta_{\eta}^{\beta} w_{k}+\delta_{f}^{k} \delta_{l}^{\beta} w_{\eta}-\delta_{f}^{k} \delta_{\eta}^{\beta} w_{l}-\delta_{f}^{\eta} \delta_{l}^{\beta} w_{k}+\delta_{f}^{\eta} \delta_{k}^{\beta} w_{l}\bigg]{\rm tr}[id],\nonumber
\end{align}
\begin{align} \label{k5}
	{\mathcal{K}_{5}}=&\sum_{\alpha\neq{\beta},k\neq l \neq  \eta=1}^{2m}{\rm tr}\bigg[c( {e_\eta})c( w)c( e_\alpha)c( e_{\beta})c( e_k)c( e_l)\bigg]\\
	&=\sum_{\alpha\neq{\beta},k\neq l \neq  \eta=1}^{2m}\bigg[-\delta_{\alpha}^{l} \delta_{k}^{\beta} w_{\eta}+\delta_{\alpha}^{l} \delta_{\eta}^{\beta} w_{k}+\delta_{\alpha}^{k} \delta_{l}^{\beta} w_{\eta}-\delta_{\alpha}^{k} \delta_{\eta}^{\beta} w_{l}-\delta_{\alpha}^{\eta} \delta_{l}^{\beta} w_{k}+\delta_{\alpha}^{\eta} \delta_{k}^{\beta} w_{l}\bigg]{\rm tr}[id],\nonumber
\end{align}
\begin{align} \label{k6}
	{\mathcal{K}_{6}}=&\sum_{f \neq \alpha\neq{\beta},k\neq l \neq  \eta=1}^{2m}{\rm tr}\bigg[c( {e_\eta})c( e_f)c( e_\alpha)c( e_{\beta})c( e_k)c( e_l)\bigg]\\
	&=\sum_{\alpha\neq{\beta},k\neq l \neq  \eta=1}^{2m}\bigg[-\delta_{\alpha}^{l} \delta_{k}^{\beta} \delta_{f}^{\eta}+\delta_{\alpha}^{l} \delta_{\eta}^{\beta} \delta_{f}^{k}+\delta_{\alpha}^{k} \delta_{l}^{\beta} \delta_{f}^{\eta}-\delta_{\alpha}^{k} \delta_{\eta}^{\beta} \delta_{f}^{l}-\delta_{\alpha}^{\eta} \delta_{l}^{\beta} \delta_{f}^{k}+\delta_{\alpha}^{\eta} \delta_{k}^{\beta} \delta_{f}^{l}\bigg]{\rm tr}[id],\nonumber
\end{align}
\begin{align} \label{k7}
	{\mathcal{K}_{7}}=&\sum_{f \neq \alpha\neq{\beta},k\neq l=1}^{2m}{\rm tr}\bigg[c( w)c( e_f)c( e_\alpha)c( e_{\beta})c( e_k)c( e_l)\bigg]\\
	&=\sum_{\alpha\neq{\beta},k\neq l \neq  \eta=1}^{2m}\bigg[-\delta_{\alpha}^{l} \delta_{k}^{\beta} w_{f}+\delta_{\alpha}^{l} \delta_{f}^{k} w_{\beta}+\delta_{\alpha}^{k} \delta_{l}^{\beta} w_{f}-\delta_{\alpha}^{k} \delta_{f}^{l} w_{\beta}- \delta_{l}^{\beta} \delta_{f}^{k}w_{\alpha}+ \delta_{k}^{\beta} \delta_{f}^{l}w_{\alpha}\bigg]{\rm tr}[id],\nonumber
\end{align}
Bringing \eqref{k3}...\eqref{k7} into \eqref{At5} yields
\begin{align}\label{At5a}
	&{\rm tr}\bigg(\sum_{f\neq{\alpha}\neq{\beta},k\neq l \neq \eta=1}^{2m}c(v)c(e_{\eta})c(w)c(e_f)c(e_{\alpha})c(e_{\beta})c(e_k)c(e_l)+c(v)c(e_f)c(e_{\alpha})c(e_{\beta})c(w)c(e_{\eta})c(e_k)c(e_l)\bigg)\\
	&=\sum_{f\neq{\alpha}\neq{\beta},k\neq l \neq \eta=1}^{2m}\biggl\{v_{\beta}\delta_{f}^{l} \delta_{k}^{\alpha} w_{\eta}-v_{\beta}\delta_{f}^{l} \delta_{\eta}^{\alpha} w_{k}-v_{\beta}\delta_{f}^{k} \delta_{l}^{\alpha} w_{\eta}+v_{\beta}\delta_{f}^{k} \delta_{\eta}^{\alpha} w_{l}+v_{\beta}\delta_{f}^{\eta} \delta_{l}^{\alpha} w_{k}-v_{\beta}\delta_{f}^{\eta} \delta_{k}^{\alpha} w_{l}\nonumber\\
	&\;\;\;\;\;\;\;\;\;\;\;\;\;\;\;\;\;\;\;\;\;\;\;-v_{\alpha}\delta_{f}^{l} \delta_{k}^{\beta} w_{\eta}+v_{\alpha}\delta_{f}^{l} \delta_{\eta}^{\beta} w_{k}+v_{\alpha}\delta_{f}^{k} \delta_{l}^{\beta} w_{\eta}-v_{\alpha}\delta_{f}^{k} \delta_{\eta}^{\beta} w_{l}-v_{\alpha}\delta_{f}^{\eta} \delta_{l}^{\beta} w_{k}+v_{\alpha}\delta_{f}^{\eta} \delta_{k}^{\beta} w_{l}\nonumber\\
	&\;\;\;\;\;\;\;\;\;\;\;\;\;\;\;\;\;\;\;\;\;\;\;+v_{f}\delta_{\alpha}^{l} \delta_{k}^{\beta} w_{\eta}-v_{f}\delta_{\alpha}^{l} \delta_{\eta}^{\beta} w_{k}-v_{f}\delta_{\alpha}^{k} \delta_{l}^{\beta} w_{\eta}+v_{f}\delta_{\alpha}^{k} \delta_{\eta}^{\beta} w_{l}+v_{f}\delta_{\alpha}^{\eta} \delta_{l}^{\beta} w_{k}-v_{f}\delta_{\alpha}^{\eta} \delta_{k}^{\beta} w_{l}\nonumber\\
	&\;\;\;\;\;\;\;\;\;\;\;\;\;\;\;\;\;\;\;\;\;\;\;+v_{\eta}\delta_{\alpha}^{l} \delta_{k}^{\beta} w_{f}-v_{\eta}\delta_{\alpha}^{l} \delta_{f}^{k} w_{\beta}-v_{\eta}\delta_{\alpha}^{k} \delta_{l}^{\beta} w_{f}+v_{\eta}\delta_{\alpha}^{k} \delta_{f}^{l} w_{\beta}+v_{\eta} \delta_{l}^{\beta} \delta_{f}^{k}w_{\alpha}-v_{\eta} \delta_{k}^{\beta} \delta_{f}^{l}w_{\alpha}\nonumber\\
	&\;\;\;\;\;\;\;\;\;\;\;\;\;\;\;\;\;\;\;\;\;\;\;\bigg[-\delta_{\alpha}^{l} \delta_{k}^{\beta} \delta_{f}^{\eta}+\delta_{\alpha}^{l} \delta_{\eta}^{\beta} \delta_{f}^{k}+\delta_{\alpha}^{k} \delta_{l}^{\beta} \delta_{f}^{\eta}-\delta_{\alpha}^{k} \delta_{\eta}^{\beta} \delta_{f}^{l}-\delta_{\alpha}^{\eta} \delta_{l}^{\beta} \delta_{f}^{k}+\delta_{\alpha}^{\eta} \delta_{k}^{\beta} \delta_{f}^{l}\bigg]g(v,w)\biggr\}{\rm tr}[id].\nonumber
\end{align}
Therefore, \eqref{t5} holds.
\end{proof}

\noindent{\bf (II-3-$\mathbb{B}$)}\eqref{t70}:

\begin{align}
	&{\rm tr}\bigg(\sum_{f,j\neq l,\hat{j}\neq\hat{l}=1}^{2m}c(v)c(e_f)c(w)c(e_f)c(e_j)c(e_l)c(e_{\hat{j}})c(e_{\hat{l}})\bigg)\nonumber\\
	&=2\sum_{f,j\neq l,\hat{j}\neq\hat{l}=1}^{2m}\biggl\{m\bigg[v_{\hat{l}}w_{l}\delta^{\hat{j}}_{j}-v_{\hat{l}}w_{j}\delta^{\hat{j}}_{l}-v_{\hat{j}}w_{l}\delta^{\hat{l}}_{j}+v_{\hat{j}}w_{j}\delta^{\hat{l}}_{j}-v_{l}w_{\hat{l}}\delta^{\hat{j}}_{j}+v_{l}w_{\hat{j}}\delta^{\hat{l}}_{j}+v_{j}w_{\hat{l}}\delta^{\hat{j}}_{l}-v_{j}w_{\hat{j}}\delta^{\hat{l}}_{l}\nonumber\\
	&\;\;\;\;\;\;\;\;\;\;\;\;\;\;\;\;\;\;-\delta^{\hat{l}}_{j}\delta^{\hat{j}}_{l}g(v,w)+\delta^{\hat{j}}_{j}\delta^{\hat{l}}_{l}g(v,w)\bigg]-w_{f}v_{\hat{l}}\delta^{f}_{l}\delta^{\hat{j}}_{j}+w_{f}v_{\hat{l}}\delta^{f}_{j}\delta^{\hat{j}}_{l}+w_{f}v_{\hat{j}}\delta^{f}_{l}\delta^{\hat{l}}_{j}-w_{f}v_{\hat{j}}\delta^{f}_{j}\delta^{\hat{l}}_{j}\nonumber\\
	&\;\;\;\;\;\;\;\;\;\;\;\;\;\;\;\;\;\;+w_{f}v_{l}\delta^{f}_{\hat{l}}\delta^{\hat{j}}_{j}-w_{f}v_{l}\delta^{f}_{\hat{j}}\delta^{\hat{l}}_{j}-w_{f}v_{j}\delta^{f}_{\hat{l}}\delta^{\hat{j}}_{l}+w_{f}v_{j}\delta^{f}_{\hat{j}}\delta^{\hat{l}}_{l}+w_{f}v_{f}\delta^{\hat{l}}_{j}\delta^{\hat{j}}_{l}-w_{f}v_{f}\delta^{\hat{j}}_{j}\delta^{\hat{l}}_{l}\biggr\}{\rm tr}[id].\nonumber
\end{align}

\begin{proof}

Similarly, use $c(e_{i})c(e_{j})+c(e_{j})c(e_{i})=-2g(e_{i}, e_{j})=-2\delta_i^j$, exchange of $c(e_f)$ and $c(w)$, we get
\begin{align}\label{At70}
	&{\rm tr}\bigg(\sum_{f,j\neq l,\hat{j}\neq\hat{l}=1}^{2m}c(v)c(e_f)c(w)c(e_f)c(e_j)c(e_l)c(e_{\hat{j}})c(e_{\hat{l}})\bigg)\\
	&=\sum_{f,j\neq l,\hat{j}\neq\hat{l}=1}^{2m}\bigg[{\rm tr}\bigg(c(v)c(w)c(e_f)c(e_f)c(e_j)c(e_l)c(e_{\hat{j}})c(e_{\hat{l}})\bigg) -2g(w,e_f){\rm tr}\bigg(c(v)c(e_f)c(e_j)c(e_l)c(e_{\hat{j}})c(e_{\hat{l}})\bigg)\bigg]\nonumber\\
	&=\sum_{f,j\neq l,\hat{j}\neq\hat{l}=1}^{2m}\bigg[\delta_{f}^{f}{\rm tr}\bigg(c(v)c(w)c(e_j)c(e_l)c(e_{\hat{j}})c(e_{\hat{l}})\bigg) -2w_f{\rm tr}\bigg(c(v)c(e_f)c(e_j)c(e_l)c(e_{\hat{j}})c(e_{\hat{l}})\bigg)\bigg]\nonumber\\
	&=\sum_{f,j\neq l,\hat{j}\neq\hat{l}=1}^{2m}\bigg[2m{\rm tr}\bigg(c(v)c(w)c(e_j)c(e_l)c(e_{\hat{j}})c(e_{\hat{l}})\bigg) -2w_f{\rm tr}\bigg(c(v)c(e_f)c(e_j)c(e_l)c(e_{\hat{j}})c(e_{\hat{l}})\bigg)\bigg].\nonumber
\end{align}

According to \eqref{trace6} in lemma \ref{A}, $g(e_{i}, e_{j})=\delta_i^j$, $g(v,e_i)=v_i$ and $ g(w,e_j)=w_j$,  we get
\begin{align}\label{At70a1}
	&\sum_{j\neq l,\hat{j}\neq\hat{l}=1}^{2m}{\rm tr}\bigg(c(v)c(w)c(e_j)c(e_l)c(e_{\hat{j}})c(e_{\hat{l}})\bigg)\\
	&=\sum_{j\neq l,\hat{j}\neq\hat{l}=1}^{2m}\bigg[-g(v,  e_{\hat{l}})g ( w,  e_{\hat{j}})g ( e_j,  e_l)+g(v,  e_{\hat{l}})g ( w,  e_l)g ( e_j,  e_{\hat{j}})-g(v,  e_{\hat{l}})g ( w,  e_j)g ( e_l,  e_{\hat{j}})\nonumber\\
	&\;\;\;\;\;\;\;\;\;\;\;\;\;\;\;\;\;\;+g(v, e_{\hat{j}})g ( w,  e_{\hat{l}})g ( e_j, e_l)-g(v, e_{\hat{j}})g ( w,  e_l)g ( e_j, e_{\hat{l}})+g(v, e_{\hat{j}})g ( w, e_j)g ( e_l, e_{\hat{l}})\nonumber\\
	&\;\;\;\;\;\;\;\;\;\;\;\;\;\;\;\;\;\;-g(v, e_l)g ( w, e_{\hat{l}})g ( e_j, e_{\hat{j}})+g(v, e_l)g ( w, e_{\hat{j}})g ( e_j, e_{\hat{l}})-g(v, e_l)g ( w, e_j)g ( e_{\hat{j}}, e_{\hat{l}})\nonumber\\
	&\;\;\;\;\;\;\;\;\;\;\;\;\;\;\;\;\;\;+g(v, e_j)g ( w, e_{\hat{l}})g ( e_l, e_{\hat{j}})-g(v, e_j)g ( w, e_{\hat{j}})g ( e_l, e_{\hat{l}})+g(v, e_j)g ( w, e_l)g ( e_{\hat{j}}, e_{\hat{l}})\nonumber\\
	&\;\;\;\;\;\;\;\;\;\;\;\;\;\;\;\;\;\;-g(v, w)g ( e_j, e_{\hat{l}})g ( e_l, e_{\hat{j}})+g(v, w)g ( e_j, e_{\hat{j}})g ( e_l, e_{\hat{l}})-g(v, w)g ( e_j, e_l)g ( e_{\hat{j}}, e_{\hat{l}})\bigg]{\rm tr}[id]\nonumber\\
    &=\sum_{j\neq l,\hat{j}\neq\hat{l}=1}^{2m}\bigg[v_{\hat{l}}w_{l}\delta^{\hat{j}}_{j}-v_{\hat{l}}w_{j}\delta^{\hat{j}}_{l}-v_{\hat{j}}w_{l}\delta^{\hat{l}}_{j}+v_{\hat{j}}w_{j}\delta^{\hat{l}}_{j}-v_{l}w_{\hat{l}}\delta^{\hat{j}}_{j}+v_{l}w_{\hat{j}}\delta^{\hat{l}}_{j}+v_{j}w_{\hat{l}}\delta^{\hat{j}}_{l}-v_{j}w_{\hat{j}}\delta^{\hat{l}}_{l}\nonumber\\
    &\;\;\;\;\;\;\;\;\;\;\;\;\;\;\;\;\;\;-\delta^{\hat{l}}_{j}\delta^{\hat{j}}_{l}g(v,w)+\delta^{\hat{j}}_{j}\delta^{\hat{l}}_{l}g(v,w)\bigg]{\rm tr}[id],\nonumber
\end{align}
and
\begin{align}\label{At70a2}
	&\sum_{j\neq l,\hat{j}\neq\hat{l}=1}^{2m}{\rm tr}\bigg(c(v)c(e_f)c(e_j)c(e_l)c(e_{\hat{j}})c(e_{\hat{l}})\bigg)\\
	&=\sum_{j\neq l,\hat{j}\neq\hat{l}=1}^{2m}\bigg[-g(v,  e_{\hat{l}})g ( e_f,  e_{\hat{j}})g ( e_j,  e_l)+g(v,  e_{\hat{l}})g ( e_f,  e_l)g ( e_j,  e_{\hat{j}})-g(v,  e_{\hat{l}})g ( e_f,  e_j)g ( e_l,  e_{\hat{j}})\nonumber\\
	&\;\;\;\;\;\;\;\;\;\;\;\;\;\;\;\;\;\;+g(v, e_{\hat{j}})g ( e_f,  e_{\hat{l}})g ( e_j, e_l)-g(v, e_{\hat{j}})g ( e_f,  e_l)g ( e_j, e_{\hat{l}})+g(v, e_{\hat{j}})g ( e_f, e_j)g ( e_l, e_{\hat{l}})\nonumber\\
	&\;\;\;\;\;\;\;\;\;\;\;\;\;\;\;\;\;\;-g(v, e_l)g ( e_f, e_{\hat{l}})g ( e_j, e_{\hat{j}})+g(v, e_l)g ( e_f, e_{\hat{j}})g ( e_j, e_{\hat{l}})-g(v, e_l)g ( e_f, e_j)g ( e_{\hat{j}}, e_{\hat{l}})\nonumber\\
	&\;\;\;\;\;\;\;\;\;\;\;\;\;\;\;\;\;\;+g(v, e_j)g ( e_f, e_{\hat{l}})g ( e_l, e_{\hat{j}})-g(v, e_j)g ( e_f, e_{\hat{j}})g ( e_l, e_{\hat{l}})+g(v, e_j)g ( e_f, e_l)g ( e_{\hat{j}}, e_{\hat{l}})\nonumber\\
	&\;\;\;\;\;\;\;\;\;\;\;\;\;\;\;\;\;\;-g(v, e_f)g ( e_j, e_{\hat{l}})g ( e_l, e_{\hat{j}})+g(v, e_f)g ( e_j, e_{\hat{j}})g ( e_l, e_{\hat{l}})-g(v, e_f)g ( e_j, e_l)g ( e_{\hat{j}}, e_{\hat{l}})\bigg]{\rm tr}[id]\nonumber\\
	&=\sum_{j\neq l,\hat{j}\neq\hat{l}=1}^{2m}\bigg[v_{\hat{l}}\delta^{f}_{l}\delta^{\hat{j}}_{j}-v_{\hat{l}}\delta^{f}_{j}\delta^{\hat{j}}_{l}-v_{\hat{j}}\delta^{f}_{l}\delta^{\hat{l}}_{j}+v_{\hat{j}}\delta^{f}_{j}\delta^{\hat{l}}_{j}-v_{l}\delta^{f}_{\hat{l}}\delta^{\hat{j}}_{j}+v_{l}\delta^{f}_{\hat{j}}\delta^{\hat{l}}_{j}+v_{j}\delta^{f}_{\hat{l}}\delta^{\hat{j}}_{l}-v_{j}\delta^{f}_{\hat{j}}\delta^{\hat{l}}_{l}\nonumber\\
	&\;\;\;\;\;\;\;\;\;\;\;\;\;\;\;\;\;\;-v_{f}\delta^{\hat{l}}_{j}\delta^{\hat{j}}_{l}+v_{f}\delta^{\hat{j}}_{j}\delta^{\hat{l}}_{l}\bigg]{\rm tr}[id].\nonumber
\end{align}

Bringing \eqref{At70a1} and \eqref{At70a2} into \eqref{At70} gives
\begin{align}\label{At70b}
	&{\rm tr}\bigg(\sum_{f,j\neq l,\hat{j}\neq\hat{l}=1}^{2m}c(v)c(e_f)c(w)c(e_f)c(e_j)c(e_l)c(e_{\hat{j}})c(e_{\hat{l}})\bigg)\\
	&=2\sum_{f,j\neq l,\hat{j}\neq\hat{l}=1}^{2m}\biggl\{m\bigg[v_{\hat{l}}w_{l}\delta^{\hat{j}}_{j}-v_{\hat{l}}w_{j}\delta^{\hat{j}}_{l}-v_{\hat{j}}w_{l}\delta^{\hat{l}}_{j}+v_{\hat{j}}w_{j}\delta^{\hat{l}}_{j}-v_{l}w_{\hat{l}}\delta^{\hat{j}}_{j}+v_{l}w_{\hat{j}}\delta^{\hat{l}}_{j}+v_{j}w_{\hat{l}}\delta^{\hat{j}}_{l}-v_{j}w_{\hat{j}}\delta^{\hat{l}}_{l}\nonumber\\
	&\;\;\;\;\;\;\;\;\;\;\;\;\;\;\;\;\;\;-\delta^{\hat{l}}_{j}\delta^{\hat{j}}_{l}g(v,w)+\delta^{\hat{j}}_{j}\delta^{\hat{l}}_{l}g(v,w)\bigg]-w_{f}v_{\hat{l}}\delta^{f}_{l}\delta^{\hat{j}}_{j}+w_{f}v_{\hat{l}}\delta^{f}_{j}\delta^{\hat{j}}_{l}+w_{f}v_{\hat{j}}\delta^{f}_{l}\delta^{\hat{l}}_{j}-w_{f}v_{\hat{j}}\delta^{f}_{j}\delta^{\hat{l}}_{j}\nonumber\\
	&\;\;\;\;\;\;\;\;\;\;\;\;\;\;\;\;\;\;+w_{f}v_{l}\delta^{f}_{\hat{l}}\delta^{\hat{j}}_{j}-w_{f}v_{l}\delta^{f}_{\hat{j}}\delta^{\hat{l}}_{j}-w_{f}v_{j}\delta^{f}_{\hat{l}}\delta^{\hat{j}}_{l}+w_{f}v_{j}\delta^{f}_{\hat{j}}\delta^{\hat{l}}_{l}+w_{f}v_{f}\delta^{\hat{l}}_{j}\delta^{\hat{j}}_{l}-w_{f}v_{f}\delta^{\hat{j}}_{j}\delta^{\hat{l}}_{l}\biggr\}{\rm tr}[id].\nonumber
\end{align}
Therefore, \eqref{t70} holds.
\end{proof}

\noindent{\bf (II-3-$\mathbb{B}$)}\eqref{t8}:
\begin{align}
	&{\rm tr}\sum_{a,b,j\neq l,\hat{j}\neq\hat{l}=1}^{2m}\bigg(c(v)c(e_a)c(w)c(e_b)c(e_j)c(e_l)c(e_{\hat{j}})c(e_{\hat{l}})+c(v)c(e_b)c(w)c(e_a)c(e_j)c(e_l)c(e_{\hat{j}})c(e_{\hat{l}})\bigg)\nonumber\\
	&=\sum_{a,b,j\neq l,\hat{j}\neq\hat{l}=1}^{2m}\bigg[-2g(e_a,w){\rm tr}\bigg(c(v)c(e_b)c(e_j)c(e_l)c(e_{\hat{j}})c(e_{\hat{l}})\bigg)+2g(e_a,e_b){\rm tr}\bigg(c(v)c(w)c(e_j)c(e_l)c(e_{\hat{j}})c(e_{\hat{l}})\bigg)\bigg]\nonumber\\
	& \;\;\;\;-2g(w,e_b){\rm tr}\bigg(c(v)c(e_a)c(e_j)c(e_l)c(e_{\hat{j}})c(e_{\hat{l}})\bigg)\bigg].\nonumber
\end{align}

\begin{proof}

Use $c(e_{i})c(e_{j})+c(e_{j})c(e_{i})=-2g(e_{i}, e_{j})=-2\delta_i^j$, we get
\begin{align}\label{t8a}
	&{\rm tr}\bigg(\sum_{a,b,j\neq l,\hat{j}\neq\hat{l}=1}^{2m}c(v)c(e_a)c(w)c(e_b)c(e_j)c(e_l)c(e_{\hat{j}})c(e_{\hat{l}})\bigg)\\
	&=\sum_{a,b,j\neq l,\hat{j}\neq\hat{l}=1}^{2m}\bigg[-{\rm tr}\bigg(c(v)c(w)c(e_a)c(e_b)c(e_j)c(e_l)c(e_{\hat{j}})c(e_{\hat{l}})\bigg)-2g(e_a,w){\rm tr}\bigg(c(v)c(e_b)c(e_j)c(e_l)c(e_{\hat{j}})c(e_{\hat{l}})\bigg)\bigg]\nonumber\\
	&=\sum_{a,b,j\neq l,\hat{j}\neq\hat{l}=1}^{2m}\bigg[{\rm tr}\bigg(c(v)c(w)c(e_b)c(e_a)c(e_j)c(e_l)c(e_{\hat{j}})c(e_{\hat{l}})\bigg)+2g(e_a,e_b){\rm tr}\bigg(c(v)c(w)c(e_j)c(e_l)c(e_{\hat{j}})c(e_{\hat{l}})\bigg)\nonumber\\
	&\;\;\;\;-2g(e_a,w){\rm tr}\bigg(c(v)c(e_b)c(e_j)c(e_l)c(e_{\hat{j}})c(e_{\hat{l}})\bigg)\bigg]\nonumber\\
	&=\sum_{a,b,j\neq l,\hat{j}\neq\hat{l}=1}^{2m}\bigg[-{\rm tr}\bigg(c(v)c(e_b)c(w)c(e_a)c(e_j)c(e_l)c(e_{\hat{j}})c(e_{\hat{l}})\bigg)-2g(w,e_b){\rm tr}\bigg(c(v)c(e_a)c(e_j)c(e_l)c(e_{\hat{j}})c(e_{\hat{l}})\bigg)\nonumber\\
	&\;\;\;\; \;\;\;\; \;\;\;\; \;\;\;\;\;\;\;+2g(e_a,e_b){\rm tr}\bigg(c(v)c(w)c(e_j)c(e_l)c(e_{\hat{j}})c(e_{\hat{l}})\bigg)-2g(e_a,w){\rm tr}\bigg(c(v)c(e_b)c(e_j)c(e_l)c(e_{\hat{j}})c(e_{\hat{l}})\bigg)\bigg],\nonumber
\end{align}
Based on \eqref{At70a1} \eqref{At70a2} and \eqref{t8a}, we get
\begin{align}
	&{\rm tr}\sum_{a,b,j\neq l,\hat{j}\neq\hat{l}=1}^{2m}\bigg(c(v)c(e_a)c(w)c(e_b)c(e_j)c(e_l)c(e_{\hat{j}})c(e_{\hat{l}})+c(v)c(e_b)c(w)c(e_a)c(e_j)c(e_l)c(e_{\hat{j}})c(e_{\hat{l}})\bigg)\\
	&=\sum_{a,b,j\neq l,\hat{j}\neq\hat{l}=1}^{2m}\bigg[-2g(e_a,w){\rm tr}\bigg(c(v)c(e_b)c(e_j)c(e_l)c(e_{\hat{j}})c(e_{\hat{l}})\bigg)+2g(e_a,e_b){\rm tr}\bigg(c(v)c(w)c(e_j)c(e_l)c(e_{\hat{j}})c(e_{\hat{l}})\bigg)\bigg]\nonumber\\
	& \;\;\;\; \;\;\;\; \;\;\;\; \;\;\;\;\;\;\;-2g(w,e_b){\rm tr}\bigg(c(v)c(e_a)c(e_j)c(e_l)c(e_{\hat{j}})c(e_{\hat{l}})\bigg)\bigg]\nonumber\\
	&=2\sum_{a,b,j\neq l,\hat{j}\neq\hat{l}=1}^{2m}\bigg[-w_{a}v_{\hat{l}}\delta^{b}_{l}\delta^{\hat{j}}_{j}+w_{a}v_{\hat{l}}\delta^{b}_{j}\delta^{\hat{j}}_{l}+w_{a}v_{\hat{j}}\delta^{b}_{l}\delta^{\hat{l}}_{j}-w_{a}v_{\hat{j}}\delta^{b}_{j}\delta^{\hat{l}}_{j}+w_{a}v_{l}\delta^{b}_{\hat{l}}\delta^{\hat{j}}_{j}-w_{a}v_{l}\delta^{b}_{\hat{j}}\delta^{\hat{l}}_{j}-w_{a}v_{j}\delta^{b}_{\hat{l}}\delta^{\hat{j}}_{l}\nonumber\\
	&\;\;\;\;\;\;\;\;\;\;\;\;\;\;\;\;\;\;\;+w_{a}v_{j}\delta^{b}_{\hat{j}}\delta^{\hat{l}}_{l}+w_{a}v_{b}\delta^{\hat{l}}_{j}\delta^{\hat{j}}_{l}-w_{a}v_{b}\delta^{\hat{j}}_{j}\delta^{\hat{l}}_{l}+v_{\hat{l}}w_{l}\delta^{\hat{j}}_{j}\delta^{a}_{b}-v_{\hat{l}}w_{j}\delta^{\hat{j}}_{l}\delta^{a}_{b}-v_{\hat{j}}w_{l}\delta^{\hat{l}}_{j}\delta^{a}_{b}+v_{\hat{j}}w_{j}\delta^{\hat{l}}_{j}\delta^{a}_{b}\nonumber\\
	&\;\;\;\;\;\;\;\;\;\;\;\;\;\;\;\;\;\;\;-v_{l}w_{\hat{l}}\delta^{\hat{j}}_{j}\delta^{a}_{b}+v_{l}w_{\hat{j}}\delta^{\hat{l}}_{j}\delta^{a}_{b}+v_{j}w_{\hat{l}}\delta^{\hat{j}}_{l}-v_{j}w_{\hat{j}}\delta^{\hat{l}}_{l}\delta^{a}_{b}-\delta^{\hat{l}}_{j}\delta^{\hat{j}}_{l}\delta^{a}_{b}g(v,w)+\delta^{\hat{j}}_{j}\delta^{\hat{l}}_{l}\delta^{a}_{b}g(v,w)\nonumber\\
	&\;\;\;\;\;\;\;\;\;\;\;\;\;\;\;\;\;\;\;-w_{b}v_{\hat{l}}\delta^{a}_{l}\delta^{\hat{j}}_{j}+w_{b}v_{\hat{l}}\delta^{a}_{j}\delta^{\hat{j}}_{l}+w_{b}v_{\hat{j}}\delta^{a}_{l}\delta^{\hat{l}}_{j}-w_{b}v_{\hat{j}}\delta^{a}_{j}\delta^{\hat{l}}_{j}+w_{b}v_{l}\delta^{a}_{\hat{l}}\delta^{\hat{j}}_{j}-w_{b}v_{l}\delta^{a}_{\hat{j}}\delta^{\hat{l}}_{j}-w_{b}v_{j}\delta^{a}_{\hat{l}}\delta^{\hat{j}}_{l}\nonumber\\
	&\;\;\;\;\;\;\;\;\;\;\;\;\;\;\;\;\;\;+w_{b}v_{j}\delta^{a}_{\hat{j}}\delta^{\hat{l}}_{l}+w_{b}v_{a}\delta^{\hat{l}}_{j}\delta^{\hat{j}}_{l}-w_{b}v_{a}\delta^{\hat{j}}_{j}\delta^{\hat{l}}_{l}\bigg]{\rm tr}[id].\nonumber
\end{align}
Therefore, \eqref{t8} holds.
\end{proof}

\section*{ Acknowledgements}
This work is sponsored by Natural Science Foundation of Xinjiang Uygur Autonomous Region 2024D01C341 and supported by the National Natural Science Foundation of China 11771070, 12061078 .
 The authors thank the referee for his (or her) careful reading and helpful comments.

\section*{Declarations}

\begin{itemize}
	\item  {\bf Funding} The Natural Science Foundation of Xinjiang Uygur Autonomous Region 2024D01C341 and the National Natural Science Foundation of China 11771070, 12061078.
	\item {\bf Conflict of interest} The authors declare no Confict of interest.
	\item {\bf Ethics approval and consent to participate} Ethics approval and consent to participate.
	\item {\bf Consent for publication} All the authors agreed to publish this research.
	\item {\bf Author contribution} All authors contributed to the study conception and design. Material preparation, data collection and analysis were performed by JH and YW. The frst draft of the manuscript was written by JH and all authors commented on previous versions of the manuscript. All authors read and approved the fnal manuscript.
\end{itemize}


\begin{thebibliography}{00}

 \bibitem{DL} L. Dabrowski., A. Sitarz., P. Zalecki.: Spectral Metric and Einstein Functionals. Advances in Mathematics, 427, 109128, (2023).


 \bibitem{Co1} A. Connes.: Quantized calculus and applications.  XIth International Congress of Mathematical Physics(Paris,1994),
 Internat Press, Cambridge, MA, 15-36, (1995).
 
 \bibitem{Co2} A. Connes.: The action functinal in Noncommutative geometry. Comm. Math. Phys. 117, 673-683, (1988).
 
\bibitem{Co3} A. Connes, J. Lott.: Particle models and Non-commutative geometry. Nucl. Phys. B Proc.Supp 18B, 29-47, (1990).

\bibitem{co4} A. Connes., A H. Chamseddine.:  Inner fluctuations of the spectral action. J. Geom. Phys. 2007, 57(1): 1-21.

\bibitem{Ka} D. Kastler.: The Dirac Operator and Gravitation. Comm. Math. Phys. 166, 633-643, (1995).

\bibitem{KW} W. Kalau and M. Walze.: Gravity, Noncommutative geometry and the Wodzicki residue. J. Geom. Physics. 16, 327-344,(1995).

\bibitem{FGLS} B. V. Fedosov, F. Golse, E. Leichtnam, E. Schrohe.: The noncommutative residue for manifolds with boundary.
J. Funct. Anal. 142, 1-31, (1996).

\bibitem{Wa1} Y. Wang.: Diffential forms and the Wodzicki residue for Manifolds with Boundary. J. Geom. Physics. 56, 731-753, (2006).

\bibitem{Wa3} Y. Wang.: Gravity and the Noncommutative Residue for Manifolds with Boundary. Letters in Mathematical Physics. 80, 37-56, (2007).

\bibitem{Wa4} Y. Wang.: Lower-Dimensional Volumes and Kastler-kalau-Walze Type Theorem for Manifolds with Boundary .  Commun. Theor. Phys. Vol 54, 38-42, (2010).
      
\bibitem{WWJ} J. Wang., Y. Wang., C L. Yang., Dirac operators with torsion and the noncommutative residue for manifolds with boundary, J. Geom. Phys., 2014, 81:92-111 .

\bibitem{WWw} J. Wang, Y. Wang, T. Wu, Dirac operators with torsion, spectral Einstein functionals and the noncommutative residue.
J. Math. Phys. 64, 102505(2023).

\bibitem{co5} A. Connes.: Noncommutative Geometry. Academic Press, San Diego. 1994.

\bibitem{FGV2} H. Figueroa., J. M. Gracia-Bond$\mathrm{\acute{i}} $a., F. Lizzi., J. C. Várilly.: A nonperturbative form of the spectral action principle in noncommutative geometry. J. Geom. Phys. 1998, 26(3-4): 329-339.

\bibitem{wo2} M. Wodzicki. Noncommutative residue I: Fundamentals, in K-theory, Arithmetic and Geometry, Yu. I. Manin, ed., Lecture
Notes in Mathematics Vol. 1289 (Springer, Berlin, 1987).

\bibitem{Ac2} T. Ackermann., J. Tolksdorf.: A generalized Lichnerowicz formula, the Wodzicki residue and gravity. J. Geom. Physics.,1996, 19:143-150.

\bibitem{pf1} F. Pf$\mathrm{\ddot{a}}$ffle., C. A. Stephan.: Chiral Asymmetry and the Spectral Action. Commun. Math. Phys. 321, 283-310 (2013).

\bibitem{pf2} F. Pf$\mathrm{\ddot{a}}$ffle., C. A. Stephan.: On gravity, torsion and the spectral action principle. J. Funct. Anal. 262, 1529-1565,(2012).

\bibitem{ILV} B. Iochum., C. Levy., D. Vassilevich.: Spectral Action for Torsion with and without Boundaries.Commun. Math. Phys. 310, 367–382,(2012).

\bibitem{pf3} F. Pf$\mathrm{\ddot{a}}$ffle., C. A. Stephan.: The Holst Action by the Spectral Action Principle. Commun. Math. Phys. 307, 261–273 (2011).

\bibitem{Ha} F. Hanisch., F. Pf$\mathrm{\ddot{a}}$ffle., C. A. Stephan.:The Spectral Action for Dirac Operators with Skew-Symmetric Torsion. Commun. Math. Phys. 300, 877–888 (2010).

\bibitem{DL2} L. Dabrowski., P. Zalecki., A. Sitarz.:Spectral Metric and Einstein Functionals for the Hodge-Dirac operator. J. Noncommut. Geom. (Online first) DOI 10.4171/JNCG/573(2024).
\end{thebibliography}
\end{document}